\newtheorem{prop}{Proposition}
\newtheorem{theo}[prop]{Theorem}
\newtheorem*{theo*}{Theorem}
\newtheorem{lemm}[prop]{Lemma}
\newtheorem{coro}[prop]{Corollary}
\newtheorem{defi}[prop]{Definition}
\newtheorem{rema}[prop]{Remark}
\theoremstyle{definition}
\newcommand{\NN}{\mathbf{N}}
\newcommand{\RR}{\mathbf{R}}
\newcommand{\ZZ}{\mathbf{Z}}
\newcommand{\cA}{\mathcal A}
\newcommand{\cC}{\mathcal C}
\newcommand{\cH}{\mathcal H}
\newcommand{\cQ}{\mathcal Q}
\DeclareMathOperator{\Lip}{Lip}
\DeclareMathOperator{\diam}{diam}
\DeclareMathOperator{\dist}{dist}
\DeclareMathOperator{\Div}{div}
\newcommand{\td}[2]{\frac{d #1}{d #2}}
\newcommand{\bangle}[1]{\left\langle #1 \right\rangle}
\newcommand{\ep}{\varepsilon}
\begin{document}

\title[Soap bubbles and positive scalar curvature]{Generalized soap bubbles and the topology of manifolds with positive scalar curvature}
\author{Otis Chodosh}
\address{OC: Department of Mathematics, Stanford University, Building 380, Stanford, CA 94305, USA}
\email{ochodosh@stanford.edu}
\author{Chao Li}
\address{CL: Department of Mathematics, Princeton University, Fine Hall, 304 Washington Road, Princeton, NJ 08540, USA}
\address{Current Address: Courant Institute, New York University, 251 Mercer St, New York, NY 10012, USA}
\email{chaoli@nyu.edu}

\begin{abstract}
We prove that for $n\in \{4,5\}$ a closed aspherical $n$-manifold does not admit a Riemannian metric with positive scalar curvature.

Additionally, we show that for $n\leq 7$, the connected sum of a $n$-torus with an arbitrary manifold does not admit a complete metric of positive scalar curvature. When combined with forthcoming contributions by Lesourd--Unger--Yau, this proves that the Schoen--Yau Liouville theorem holds for all locally conformally flat manifolds with non-negative scalar curvature.

A key geometric tool in these results are generalized soap bubbles---surfaces that are stationary for prescribed-mean-curvature functionals (also called $\mu$-bubbles). 
\end{abstract}

\maketitle

\section{Introduction}

We begin by recalling the following well-known result proven by R.\ Schoen and S.-T.\ Yau and by M.\ Gromov and B.\ Lawson.\footnote{D.\ Stern has recently discovered an interesting new proof of this when $n=3$ (see \cite{Stern}).} 
\begin{theo}[Geroch Conjecture \cite{SY:3d-torus,SY:descent,SY:sing-PMT,GromovLawson}]\label{thm.Geroch}
The $n$-torus does not admit a Riemannian metric of positive scalar curvature. 
\end{theo}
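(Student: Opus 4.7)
The plan is to argue by induction on $n$, using the Schoen--Yau stable minimal hypersurface slicing, with Gauss--Bonnet as the base case. For the induction to close cleanly it helps to establish the slightly stronger statement: no closed oriented $n$-manifold $M$ admitting a nonzero-degree map $f\colon M\to T^n$ carries a metric of positive scalar curvature. The base case $n=2$ reduces to Gauss--Bonnet: any such $M$ has genus at least one, hence $\chi(M)\le 0$ and $\int_M K\,dA = 2\pi\chi(M)\le 0$, ruling out $K>0$ pointwise.

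For the inductive step, suppose $(M^n,g)$ has $R_g>0$ and admits such a map $f\colon M\to T^n$. Choose a primitive class $\alpha\in H^1(T^n;\ZZ)$ dual to one of the circle factors; then $f^*\alpha\in H^1(M;\ZZ)$ is nontrivial, and its Poincar\'e dual is represented by an area-minimizing integral current $\Sigma^{n-1}\subset M$ via the direct method of geometric measure theory. When $n\le 7$, Federer's regularity theorem ensures $\Sigma$ is a smooth embedded two-sided hypersurface. Composing $f|_\Sigma$ with the projection $T^n\to T^{n-1}$ killing the chosen circle factor produces a nonzero-degree map $\Sigma\to T^{n-1}$, making $\Sigma$ eligible for the inductive hypothesis.

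The key analytic step is to produce a metric of positive scalar curvature on $\Sigma$, which I would do via the classical Schoen--Yau conformal change. The stability inequality gives $\int_\Sigma (|A|^2 + \Ric_g(\nu,\nu))\varphi^2 \leq \int_\Sigma |\nabla \varphi|^2$, and the traced Gauss equation with $H=0$ yields $2\Ric_g(\nu,\nu) = R_g - R_\Sigma - |A|^2$. Combining, for any $\varphi\not\equiv 0$,
\[
\int_\Sigma |\nabla\varphi|^2 + \tfrac12 \int_\Sigma R_\Sigma\,\varphi^2 \;\ge\; \tfrac12\int_\Sigma (R_g + |A|^2)\varphi^2 \;>\; 0.
\]
Testing with an appropriate power of the first positive eigenfunction then upgrades this to positivity of the conformal Laplacian on $\Sigma$, yielding a metric of positive scalar curvature in the conformal class of $g|_\Sigma$. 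This contradicts the inductive hypothesis applied to $\Sigma\to T^{n-1}$.

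The principal obstacle is the regularity of $\Sigma$ in dimensions $n\ge 8$, where area-minimizing currents may carry singularities of codimension $7$. The Schoen--Yau resolution---a delicate induction on dimension that descends through the singular set---was a subsequent technical achievement; alternatively, the Gromov--Lawson Dirac operator approach entirely sidesteps the singularity issue for spin manifolds. The $\mu$-bubble technology introduced in the present paper can be viewed as another route around this obstruction, building in enough flexibility through the prescribed-mean-curvature functional to regularize the slicing construction in the dimensions relevant to the new theorems stated in the abstract.
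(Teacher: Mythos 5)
Your argument is the classical Schoen--Yau inductive descent via stable minimal hypersurfaces, which is exactly (one of) the proofs the paper cites for Theorem \ref{thm.Geroch}; the paper does not reprove this result, and your reduction to nonzero-degree maps onto $T^{n-1}$, the degree computation for $\pi\circ f|_\Sigma$, and the stability-plus-Gauss-equation conformal rearrangement are all correct. The only limitation, which you already flag, is that minimal-hypersurface regularity confines the argument to $n\le 7$ (the dimensions actually used in this paper); the unrestricted statement requires the singular slicings of \cite{SY:sing-PMT} or the Gromov--Lawson index-theoretic approach.
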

 This result (and generalizations thereof) has had several important consequences, including Schoen--Yau's proof of the positive mass theorem in general relativity \cite{SY:PMT1,Schoen:Yamabe-survey,SY:sing-PMT} and Schoen's resolution of the Yamabe problem concerning conformal deformations to constant scalar curvature \cite{Schoen:Yamabe}. 

In this paper, we provide two extensions of Theorem \ref{thm.Geroch}, ruling out positive scalar curvature on closed aspherical manifolds of dimensions $4$ and $5$, as well as complete metrics of positive scalar curvature on an arbitrary manifold connect sum with a torus. 

We prove these results by analyzing stable solutions to the prescribed mean curvature problem (called here $\mu$-bubbles). This seems to be the first use of these surfaces to yield global topological restrictions on positive scalar curvature; in Section \ref{subsec:idea-proofs} below, we discuss previous applications of $\mu$-bubbles for local geometry of scalar curvature (e.g. comparison theorems). We expect that $\mu$-bubbles will find other topological applications.

\subsection{Aspherical manifolds} Recall that a smooth manifold is aspherical if it has contractible universal cover.  For example, any closed manifold covered by a Cartan--Hadamard manifold (such as the torus) is aspherical. Our first main result is as follows.
\begin{theo}\label{thm.main}
For $n \in \{4,5\}$, a smooth closed aspherical n-manifold $N$ does not admit a smooth Riemannian metric with positive scalar curvature. Any metric of non-negative scalar curvature on $N$ is flat. 
\end{theo}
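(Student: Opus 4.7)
The plan is a proof by contradiction, using $\mu$-bubbles to implement a Schoen--Yau-style dimension reduction in the aspherical setting. Suppose $(N^n,g)$ is closed, aspherical, and has $R_g>0$, for $n\in\{4,5\}$. Since $N$ is closed aspherical with $\dim N\geq 1$, $\pi_1(N)$ is infinite and torsion-free, so the universal cover $(\tilde N,\tilde g)$ is complete, contractible, noncompact and PSC, with $\pi_1(N)$ acting cocompactly by isometries. I plan to find inside $\tilde N$ (or a suitable intermediate cover) a closed, stable $\mu$-bubble whose induced geometry, after a conformal rescaling, gives a positive scalar curvature metric on a hypersurface that carries nontrivial topology inherited from $N$.

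Concretely, fix a basepoint $p\in\tilde N$ and radii $R_0<R_1$, choose a smooth function $h$ on the annulus $A=\overline{B_{R_1}(p)}\setminus B_{R_0}(p)$ with $h$ very positive on $\partial B_{R_0}$ and very negative on $\partial B_{R_1}$, and minimize
\[
\cH(\Omega)=\Area(\partial^*\Omega\cap\inte A)-\int_\Omega h\,dV
\]
among Caccioppoli sets with $B_{R_0}\subset\Omega\subset B_{R_1}$. The sign of $h$ on the barriers keeps the minimizer away from $\partial A$, and the dimensional restriction $n\leq 7$ makes it smooth; call it $\Sigma^{n-1}$. The first variation gives $H_\Sigma=h|_\Sigma$, and the second variation produces a stability inequality coupling $R_{\tilde g}$, $R_\Sigma$, $|A_\Sigma|^2$, $h$, and $\partial_\nu h$. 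Combining $R_{\tilde g}>0$, the Cauchy--Schwarz bound $|A_\Sigma|^2\geq H_\Sigma^2/(n-1)$, and a choice of $h$ satisfying an ODE along normal geodesics (a ``bubble equation''), I expect the first eigenfunction $u>0$ of the stability operator to give, via the standard conformal change $\bar g=u^{4/(n-3)}g_\Sigma$, a smooth metric on $\Sigma$ with $R_{\bar g}>0$. For $n=5$ one iterates this argument once inside $(\Sigma^4,\bar g)$ to descend to a PSC $3$-manifold $\Sigma'^3$.

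At this point Perelman's resolution of geometrization classifies the resulting PSC $3$-manifold as a connected sum of spherical space forms and copies of $S^2\times S^1$, so its fundamental group is a free product of finite groups and a finitely generated free group. The main obstacle — and the genuinely new step beyond the classical Geroch theorem — is to combine this with asphericity to force a contradiction: for the torus one tracks a nontrivial $H^1$ class through each descent, and no such global class exists on a general aspherical manifold. To substitute, the $\mu$-bubble construction should be performed equivariantly in a cover of $N$ so that the terminal $3$-manifold maps back to $N$ carrying a loop representing a nontrivial element of $\pi_1(N)$; since $N$ is a $K(\pi,1)$ with torsion-free $\pi_1$, any such loop generates an infinite cyclic subgroup of $\pi_1(\Sigma')$ that injects into $\pi_1(N)$, which one then must show is incompatible with the free-product structure above. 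Making this \emph{essentiality} work is the crux, and likely requires working in an intermediate cover chosen adapted to a nontrivial $\pi_1$-element and placing the $\mu$-bubble in a long thin ``slab'' rather than a concentric ball. Finally, the rigidity statement for $R_g\geq 0$ reduces to the PSC case via the Bourguignon--Kazdan--Warner deformation (a non-Ricci-flat metric with $R_g\geq 0$ is conformal to one with $R>0$): $g$ must therefore be Ricci flat, and on a closed aspherical manifold of dimension $\leq 5$ a Ricci-flat metric is flat by the Cheeger--Gromoll splitting applied to the universal cover together with Bieberbach's theorem.
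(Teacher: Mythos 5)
Your reduction of the non-negative case to the PSC case, and the first two steps of the PSC argument (pass to the universal cover, produce a stable $\mu$-bubble/minimal hypersurface, descend with the conformal weight), are consistent with the paper. But the endgame has a genuine gap, and in fact the contradiction you are aiming for does not exist. After descending to a PSC $3$-manifold $\Sigma'$ and invoking geometrization, you propose to contradict asphericity by showing that an essential loop of $N$ survives in $\Sigma'$ and that an infinite cyclic subgroup is ``incompatible with the free-product structure'' of $\pi_1(\Sigma')$. It is not: $\ZZ$ is a free group and embeds in any nontrivial free product of finite groups and free groups, so no group-theoretic contradiction is available. Worse, there is no mechanism forcing $\Sigma'$ to carry any essential loop at all --- the slices produced by these constructions are typically spheres. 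Asphericity simply cannot be exploited through $\pi_1$ of the descended slices.

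The paper's contradiction is homological and metric, and rests on two ingredients absent from your proposal. First, the setup: one fixes a geodesic line $\sigma$ in $\tilde N$ and a compact hypersurface-with-boundary $M_4$ whose boundary is far from $\sigma$ and which has non-zero algebraic intersection number with $\sigma$; the goal is to cap off the $3$-dimensional $\mu$-bubble $M_3\subset M_4$ by $4$-chains staying away from $\sigma$, producing a $4$-cycle with non-trivial intersection with $\sigma$ and contradicting $H_4(\tilde N)=0$ (contractibility enters only through the quantitative filling estimate, Proposition \ref{prop:fill-bd-diam}). Second, and this is the heart of the $n=5$ case, one must control the \emph{diameter} of the pieces being filled: a Yamabe-positive $3$-manifold can have arbitrarily large diameter, so the paper develops a slice-and-dice decomposition of $M_3$ --- slicing along stable $2$-spheres of diameter $\le\pi$ to make $H_2(\partial\hat M_3)\to H_2(\hat M_3)$ surjective, then dicing with free boundary $\mu$-bubbles of disk type --- into connected pieces of diameter $\le 10\pi$, each of which can be filled in a ball of radius independent of the separation parameter. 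Without a diameter/filling-radius control of this kind, nothing in your construction localizes relative to $\sigma$ (or relative to anything), and no contradiction can be extracted. Also note the order of operations in the paper: the $\mu$-bubble is taken inside the area-minimizing $M_4$ with a warped weight $u_4$, rather than directly in an annulus of $\tilde N$, precisely so that the Schoen--Yau descent weights can be propagated without an uncontrolled conformal change near the boundary.
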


In a 1987 survey article \cite{SY:sketch}, Schoen--Yau first proposed that all closed aspherical manifolds do not admit metrics of positive scalar curvature.\footnote{S.-T. Yau has informed us that they in fact conjectured this in the early 1980s.} They outlined a proof of this in $4$-dimensions \cite[Theorem 6]{SY:sketch}, but many parts of the proof have not been given. Resolving these missing parts is essential to carry out their outline, as we do in this paper. Furthermore, the case of $5$-dimensional manifolds is considerably more involved. See Section 1.3 for a more detailed discussion.

Theorem \ref{thm.main} (in all dimensions $n\geq 4$) was also conjectured in a slightly different form by M.\ Gromov (see \cite[p.\ 113]{gromov1986large}). It has some link with the Novikov conjecture on topological invariance of certain polynomials of Pontryagin classes, as explained in \cite[p.\ 25]{gromov2019lectures}. Furthermore, as discussed in \cite[Section 16]{gromov2017questions}, Theorem \ref{thm.main} (and its conjectural higher dimensional analogue) is one of the central questions in the study of geometric and topological properties of manifolds with positive scalar curvature.

Recently, J. Wang proved Theorem \ref{thm.main} in the special case that $N$ is of dimension $4$ with nonzero first Betti number \cite[Chapter 7 (Theorem F)]{wang2019thesis}.\footnote{As pointed out in \cite{wang2019thesis}, there exist infinitely many aspherical $4$-manifolds with zero first Betti number \cite{RT}. } 

We note that in the first version of this article, we only considered Theorem \ref{thm.main} in the case of $n=4$. As we were finishing writing down the generalization to $n=5$, we received a preprint from M. Gromov \cite{Gromov2020metrics} containing a proof of the same generalization, as well as further extensions to certain non-compact $5$-dimensional manifolds. Both approaches seem relatively similar, but certain central technical steps (compare Section 6.3 and 6.4 in this paper to \cite[Section 6]{Gromov2020metrics}) are obtained via different methods (and were obtained independently). 


\subsection{The Geroch conjecture with arbitrary ends} Our second main result resolves a question arising in the work of Schoen--Yau on locally conformally flat manifolds \cite{SY:conf-flat} (cf.\ \cite[\S 6]{SY:lectures}).

\begin{theo}\label{thm.pmt-bend}
Let $n\leq 7$. For any $n$-manifold $X$, the connect sum $T^n\# X$ does not admit a complete metric of positive scalar curvature. The only complete metric of non-negative scalar curvature on $T^n\#X$ is flat. 
\end{theo}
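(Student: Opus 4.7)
The plan is to execute a single $\mu$-bubble step in a $\ZZ$-cover of $T^n\# X$, producing a closed $(n-1)$-dimensional Riemannian manifold of positive scalar curvature admitting a degree-one map to $T^{n-1}$; such a manifold cannot exist, by the Schoen--Yau/Gromov--Lawson extension of Theorem~\ref{thm.Geroch}. Suppose $g$ is a complete metric of positive scalar curvature on $M:=T^n\# X$. Writing $T^n=S^1\times T^{n-1}$, I would choose a smooth map $\pi\colon M\to S^1$ equal to the first-factor projection on $T^n\setminus B^n$ and constant on $X\setminus B^n$, and let $\hat M\to M$ be the induced $\ZZ$-cover with lifted projection $\hat\pi\colon\hat M\to\RR$. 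The lifted metric $\hat g$ is complete with $R_{\hat g}>0$; $\hat M$ is diffeomorphic to $\RR\times T^{n-1}$ with a copy of $X\setminus B^n$ attached at each integer point; and generic fibers $\hat\pi^{-1}(t)$ are copies of $T^{n-1}$.

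For $L>0$ large, on the slab $\Omega_L:=\hat\pi^{-1}((-L,L))$ I pick a smooth function $h$ with $h\to +\infty$ at $\hat\pi=-L$, $h\to -\infty$ at $\hat\pi=L$, and $h\to +\infty$ near each attached end $(X\setminus B^n)_k\subset \hat M$, arranged so that pointwise on $\Omega_L$
\[
R_{\hat g}\;+\;\tfrac{n}{n-1}h^{2}\;>\;2|\nabla h|.
\]
Such $h$ can be built by patching together Gromov-type blow-up profiles near each barrier surface, using the positivity of $R_g$. I would then minimize
\[
\cA(E)=\cH^{n-1}(\partial^{*}E)-\int_{E}h\,d\vol_{\hat g}
\]
over Caccioppoli sets $E\subset\Omega_L$ interpolating between $\hat\pi\ll 0$ and $\hat\pi\gg 0$. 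The blow-up of $h$ at each barrier surface forces a minimizing sequence to remain in a compact region, yielding a compact, smooth (since $n-1\le 6$) stable minimizer $\Sigma\subset\Omega_L$.

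Because $\Sigma$ separates the two ends of the slab, it is homologous to a generic fiber, and the projection $\hat M\to T^{n-1}$ onto the remaining torus factors restricts to a map $\Sigma\to T^{n-1}$ of degree one. Stability of $\Sigma$ (with $H_\Sigma=h|_\Sigma$), the Gauss equation, $|A|^{2}\ge h^{2}/(n-1)$, and $\partial_\nu h\ge-|\nabla h|$ combine to give
\[
\int_\Sigma\bigl(|\nabla u|^{2}+\tfrac12 R_\Sigma u^{2}\bigr)\;\ge\;\tfrac12\int_\Sigma\bigl(R_{\hat g}+\tfrac{n}{n-1}h^{2}-2|\nabla h|\bigr)u^{2}\;>\;0
\]
for every nonzero test function $u$. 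For $n-1\ge 3$, the Schoen--Yau conformal-rescaling trick converts this into a metric of positive scalar curvature on $\Sigma$; for $n-1=2$, the same inequality with $u\equiv 1$ and Gauss--Bonnet force $\Sigma\cong S^{2}$. In either case, $\Sigma$ is a closed PSC $(n-1)$-manifold admitting a degree-one map to $T^{n-1}$, contradicting the generalized Geroch theorem. The rigidity statement (flatness when $R_g\ge 0$) follows from a standard conformal perturbation of a non-flat metric into strict PSC on a large compact region, to which the same argument applies.

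I expect the main obstacle to be the \emph{compactness} of the $\mu$-bubble $\Sigma$. The cover $\hat M$ is noncompact in two essentially independent ways: along the $\hat\pi$-direction (easily handled by the blow-up of $h$ at $\partial\Omega_L$) and along each of the infinitely many attached copies of $X\setminus B^n$, whose geometry is arbitrary. Designing barriers near each attached end that are compatible with the inequality $R_{\hat g}+\tfrac{n}{n-1}h^{2}>2|\nabla h|$, uniformly across all deck-translates, is the delicate step, and will likely require exploiting both the positivity of $R_g$ on each fundamental piece and an exhaustion argument combined with the $\ZZ$-deck symmetry.
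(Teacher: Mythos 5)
Your overall architecture matches the paper's proof: pass to the $\ZZ$-cover $\hat M\cong(T^{n-1}\times\RR)\#_\ZZ X$, set up a $\mu$-bubble (with $u\equiv1$) in a slab whose prescribing function $h$ blows up at the slab walls and inside the attached copies of $X\setminus B^n$, use stability to get positivity of the conformal Laplacian on the resulting compact hypersurface $\Sigma$, and contradict the Schoen--Yau descent using that $\Sigma$ is homologous to a $T^{n-1}$ fiber (your degree-one map to $T^{n-1}$ is equivalent to the paper's nonvanishing of $\int_\Sigma\omega^1\wedge\cdots\wedge\omega^{n-1}$, and the paper additionally caps off and glues a compact piece $\hat M'$ into a closed $T^n\#\tilde X$ to make ``homologous to a fiber'' precise even when $\Sigma$ enters the ends); the reduction of the rigidity statement to the PSC case via Kazdan plus the splitting theorem is likewise the same. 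However, the step you defer at the end as ``the delicate step'' is not a technicality to be patched later: it is the central new point of this theorem, and the route you suggest for it would fail. On each attached end $(X\setminus B^n)_k$ the metric is complete but otherwise arbitrary, so $R$ is positive with no positive lower bound and possibly fast decay. A Gromov-type profile $h=-\lambda\tan(\lambda t/2)$ satisfies $R+h^2-2|\nabla h|>0$ only if $R\gtrsim\lambda^2$ on the entire collar of length $\sim\pi/\lambda$ carrying the profile, i.e.\ it consumes uniformly positive scalar curvature; the $\ZZ$-deck symmetry does not help, since all ends are isometric translates of the single end $X\setminus B^n$ on which $\inf R=0$ may occur, and an exhaustion does not change this.

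The missing idea is a blow-up profile that consumes no scalar curvature at all. Taking a distance-like function $\rho_1$ with $|\Lip\rho_1|\le L$, the paper sets, inside each relevant end, $h=2L/(\rho_1-c_k)$ for a suitable constant $c_k$; then $h^2-2|\nabla h|\ge0$ pointwise, so strict positivity $R>0$ alone closes the stability inequality (it also satisfies your stronger version with coefficient $\tfrac{n}{n-1}$), and $h$ blows up at the finite depth $\rho_1=c_k$, so $\{|h|<\infty\}$ meets each end in a bounded set and the minimizer is confined to a compact region. Two further corrections to your setup: the sign of the blow-up on the ends must be compatible with the reference set $\Omega_0$ --- $h\to+\infty$ only on the ends contained in $\Omega_0$ (one side of the slab) and $h\to-\infty$ on the ends on the other side; prescribing $h\to+\infty$ on \emph{every} attached end, as written, makes the minimization degenerate (the competitor can push $\partial E$ arbitrarily deep into the ends not contained in $\Omega_0$, and no compact minimizer is produced). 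Also, to run the $\tan$-profile on the torus part you should first rescale so that $R>1$ there, which is legitimate because that part is the image of a compact piece of $M$, and choose the slab length accordingly. With the scalar-curvature-free profile in place, the rest of your argument goes through essentially as in \cite{SY:descent,SY:sing-PMT}.
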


For example, this implies that a punctured torus does not admit a complete metric of positive scalar curvature (note however, that this particular statement follows from work of Gromov-Lawson in \cite[Example 6.9]{GL:complete} based on relative index theorems). When $X$ is compact, Theorem \ref{thm.pmt-bend} is well-known: it has recently been proven in all dimensions by Schoen--Yau \cite{SY:sing-PMT} via an analysis of singular minimal surfaces (cf.\ \cite{SY:descent,GromovLawson}). 

We emphasize that M.\ Lesourd, R.\ Unger, and S.-T.\ Yau have recently proven \cite{LUY:liouville} Theorem \ref{thm.pmt-bend} for $n=3$. Their proof extends to $3<n\le 7$ under certain technical assumptions on the geometry and topology of the manifold at infinity. Our proof is different at a technical level (even when $n=3$) as compared to theirs (which involves analyzing non-compact stable minimal hypersurfaces).

The main reason to consider Theorem \ref{thm.pmt-bend} comes from the study of the topology of locally conformally flat manifolds with nonnegative scalar curvature. In their foundational work on these manifolds \cite{SY:conf-flat} (cf.\ \cite[Theorem 3.5]{SY:lectures}), Schoen--Yau have proven that the geometry of the developing map of a locally conformally flat manifold with non-negative scalar curvature is strongly influenced by the mass of the manifold obtained via a Green's function conformal blown-up (motivated by Schoen's solution to the Yamabe problem \cite{Schoen:Yamabe}). Such a blown-up manifold will have a distinguished asymptotically flat end, but will also have other ends with metrics that are complete but do not have any other constraints on their geometry or topology.

If there were no such uncontrolled ends, a well-known argument due to Lohkamp allows one to reduce the positive mass theorem to the Geroch conjecture for $T^n \# M$ (where $M$ is compact). Such a reduction is delicate for asymptotically flat manifolds with other complete ends, since the geometry along the other ends could affect certain global arguments. However, Lesourd--Unger--Yau have recently proven \cite{LUY:liouville} that, through a careful analysis of the Green's function and modification of Lohkamp's argument, one can reduce\footnote{For completeness, we outline this reduction in the appendix of this paper.} the study of the manifolds arising in the work of Schoen--Yau \cite{SY:conf-flat} to the situation in Theorem \ref{thm.pmt-bend}. As such, by combining Theorem \ref{thm.pmt-bend} with results in \cite{SY:conf-flat} and \cite{LUY:liouville}, we have the following definitive result. 
\begin{coro}\label{coro.SY-Liouville}
Suppose that $(M^n,g)$ is a complete Riemannian manifold with $R_g\geq 0$. If $\Phi : M \to S^n$ is a conformal map, then $\Phi$ is injective and $\partial\Phi(M)$ has zero Newtonian capacity. 
\end{coro}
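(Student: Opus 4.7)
The plan is to argue by contrapositive, turning a hypothetical failure of either injectivity of $\Phi$ or the vanishing of the Newtonian capacity of $\partial\Phi(M)$ into the existence of a complete metric of positive scalar curvature on a manifold of the form $T^n\#X$, and then invoking Theorem \ref{thm.pmt-bend} to obtain a contradiction. This is the route outlined between the statements of Theorem \ref{thm.pmt-bend} and Corollary \ref{coro.SY-Liouville} in the introduction.

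The first step is to feed the hypothesis ``$\Phi$ is not injective, or $\partial\Phi(M)$ has positive Newtonian capacity'' into the Schoen--Yau developing map analysis from \cite{SY:conf-flat}. The output one wants is a Green's function $G$ on $(M,g)$, based at a carefully chosen point $p$, such that the conformally changed metric $\widetilde g = G^{4/(n-2)}g$ on $M\setminus\{p\}$ is complete, satisfies $R_{\widetilde g}\ge 0$, and exhibits a distinguished asymptotically flat end (near $p$) whose ADM mass is strictly positive. The remaining ends of $M$ lift to additional complete ends of $(M\setminus\{p\},\widetilde g)$ over which one has no further geometric or topological control. Producing this blow-up rigorously in the presence of uncontrolled ends is delicate and requires the refined Green's function estimates announced in \cite{LUY:youtube}.

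The second step is the Lohkamp-type surgery, in the form adapted to this setting in \cite{LUY:youtube}. The idea is to deform $\widetilde g$ inside the distinguished asymptotically flat end so that it is exactly flat outside a compact set, paying for the positive mass with a compactly supported scalar-curvature deficit; then glue in a flat torus minus a small ball to produce a metric on $T^n\#X$ with $R\ge 0$ and $R>0$ on a non-empty open set, where $X$ is an $n$-manifold absorbing all of the uncontrolled topology. A final conformal change promotes this to strict positivity $R>0$ everywhere, giving a complete positive scalar curvature metric on $T^n\#X$. Because $n\le 7$, Theorem \ref{thm.pmt-bend} rules out any such metric, producing the desired contradiction; hence $\Phi$ must be injective and $\partial\Phi(M)$ must have zero Newtonian capacity.

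The main obstacle is clearly the second step, together with the fine Green's function analysis at the end of the first step. Both are performed in the presence of arbitrary ends with no geometric or topological control, which is precisely the content of the announced work \cite{LUY:youtube}; the scalar-curvature input this paper supplies is exclusively Theorem \ref{thm.pmt-bend}, whose role is to close the argument once the reduction to the $T^n\#X$ setting has been achieved. I would not expect any significant additional difficulty beyond combining these three ingredients, provided the \cite{LUY:youtube} reduction is taken as a black box.
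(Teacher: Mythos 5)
Your proposal matches the paper's treatment exactly: Corollary \ref{coro.SY-Liouville} is obtained precisely by combining the Green's-function conformal blow-up of \cite{SY:conf-flat}, the Lohkamp-type reduction to the $T^n\#X$ setting announced in \cite{LUY:youtube}, and Theorem \ref{thm.pmt-bend}, with the latter being the only new scalar-curvature input supplied here. The one point worth adding is that the corollary carries no dimension restriction while Theorem \ref{thm.pmt-bend} requires $n\leq 7$; this is harmless because for $n\geq 7$ the statement is already \cite[Theorem 3.1]{SY:conf-flat}.
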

As shown in \cite[Theorems 4.6 and 4.7]{SY:conf-flat}, the Liouville result in Corollary \ref{coro.SY-Liouville} has strong consequences for the higher homotopy groups of locally conformally flat manifolds with nonnegative scalar curvature.

Under additional assumptions on the geometry or dimension of $M$ (including, e.g., that $n\ge 7$), this was proven by Schoen--Yau \cite[Theorem 3.1]{SY:conf-flat}. In fact, Corollary \ref{coro.SY-Liouville} was announced by Schoen--Yau in full generality \cite[Proposition 4.4']{SY:conf-flat} (cf.\ \cite[p.\ 262]{SY:lectures}), but the essential ingredient along the lines of Theorem \ref{thm.pmt-bend} has never appeared. Finally, we reiterate that Lesourd--Unger--Yau \cite{LUY:liouville} have proven Corollary \ref{coro.SY-Liouville} when $n=3$ (as well as for $4\leq n\leq 7$ with certain additional assumptions on the geometry and topology of $(M,g)$ at infinity).

We remark that our proof of Theorem \ref{thm.pmt-bend} can be generalized to allow certain other manifolds in place of the torus, see Section \ref{section.SYS} and \cite{Chen:SYS}. We expect that one may remove the dimensional restriction (following \cite{SY:sing-PMT}) but we do not pursue this here.

\subsection{Idea of the proofs of Theorem \ref{thm.main} and Theorem \ref{thm.pmt-bend}}\label{subsec:idea-proofs}

We explain the idea of the proofs of the results described above. As will be seen later, both Theorem \ref{thm.main} and Theorem \ref{thm.pmt-bend} have the same central difficulty: it is hard to find a (compact) stable minimal surface due to non-compactness of the ambient manifold. Thus, we will mostly focus on Theorem \ref{thm.main} here, and briefly indicate the strategy of Theorem \ref{thm.pmt-bend} at the end.

Our strategy of proof of Theorem \ref{thm.main} extends the outline of Schoen--Yau in \cite{SY:sketch} when $n=4$. We now describe their outline (of Theorem \ref{thm.main} when $n=4$) and subsequently detail our contributions as well as how to generalize the strategy to $n=5$. 
\\

	Suppose to the contrary, that $(N^4,g)$ is a closed aspherical manifold with $R(g)\ge 1$. Pass to a non-compact cover $(\overline N,\overline g)$ which is homotopy equivalent to $S^1$. Fix $[\sigma]$ a generator of $\pi_1(\overline N)$. Minimize area among hypersurfaces dual to $[\sigma]$ to find a complete\footnote{In our paper, we find it convenient to instead work with a stable minimal hypersurface with fixed boundary that is very far from $\sigma$, but this is not an important point.} stable minimal hypersurface $M$.  By Schoen--Yau's inductive descent technique, the stable minimal hypersurface $(M,\overline g|_M)$ is Yamabe positive. 
	
	Schoen--Yau suggest that one can derive a contradiction from this as follows.  Suppose that one could find a large region $\Omega \subset M$ so that each component of $\partial\Omega$ (there might be many) has controlled area. One can hope that each component of $\partial\Omega$ can be filled in by some $3$-manifold in $\overline N$ of bounded diameter. The existence of such a fill-in is expected here, as $\overline N$ is a covering space of a closed manifold (cf.\ the Lemma below Theorem 6 in \cite{SY:sketch}). Given this, it would be possible to fill in each component of $\partial \Omega$ without affecting the intersection with $\sigma$. This would yield a $3$-cycle in $\overline N$ with non-trivial algebraic intersection with $\sigma$. This would contradict $H_3(\overline N) =0$, finishing the proof. 
\\

\begin{figure}[htbp]
	\centering
	\includegraphics[width=\textwidth]{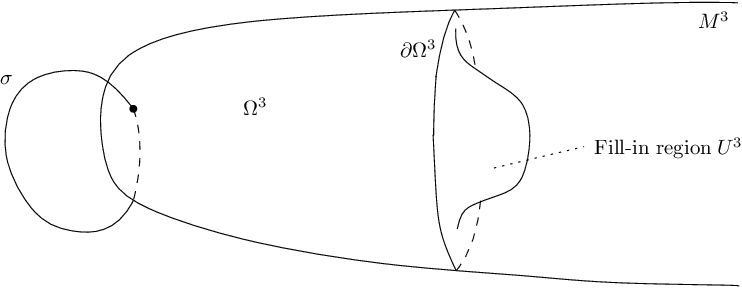}
	\caption{The Schoen--Yau outline}
	\label{pic1_SY_outline}
\end{figure}
In this paper, we obtain a complete proof of Theorem \ref{thm.main} (when $n=4$) along the lines of the outline given above. Our main contributions are the resolutions of the following fundamental difficulty arising in the outline above: to actually find $\Omega$ as stated. One intuitive way to approach this would be to exploit the fact that stable minimal surfaces in a three manifold with scalar curvature $\geq 2$ are spherical and have area $\leq 4\pi$ (see \cite{BBN} for a related rigidity result). However, there is no reason that $M$ will contain any minimal surfaces, much less admit an exhaustion by regions bounded by stable minimal surfaces. 

Instead, we rely on generalized soap bubbles---surfaces that are stationary for the prescribed-mean-curvature functional (called here the $\mu$-bubble after \cite{gromov1996positive,gromov2019lectures}). The use of this functional in scalar curvature problems was first considered by Gromov in \cite[Section 5$\frac56$]{gromov1996positive}. Recently, this approach was used by Gromov to obtain a list of comparison theorems in scalar curvature \cite{Gromov:metric-inequalities,gromov2019lectures}, and later expanded in work of J.\ Zhu \cite{zhu2020width,zhu2020rigidity}.\footnote{The prescribed-mean-curvature functional has recently been considered in other geometric problems as well, cf.\ \cite{ZhoiZhu:prescribed-CMC,BW:stable-PMC,Zhou:mult1}.} By applying this technique, we can localize the minimizer while still retaining the geometric properties used in the inductive descent argument.


We also note that an additional issue with the outline as explained above is that the geometry of $(M,\overline g  |_M)$ after conformally deforming to positive scalar curvature might be dramatically different near infinity from the geometry of $(\overline N,\overline g)$. To handle this, we combine the $\mu$-bubble method with the warped product descent technique from \cite{SY:ric,SY:sing-PMT}. \\

Next, we briefly discuss the generalization of this argument to the case of $n=5$. We argue similarly to the above sketch and produce a curve $\sigma$ and $4$-dimensional minimal surface $M_4$  (we find it convenient to work in the universal cover and with a minimizing line $\sigma$, but this is not an important issue) with boundary far from $\sigma$, so that $M_4$ has non-trivial algebraic intersection with $\sigma$. We can then use the $\mu$-bubble argument as in $n=3$ to find a region in $M_4$ whose boundary is Yamabe positive and lies far away from $\sigma$. At this point, we encounter a serious difficulty not present in the previous situation: $3$-manifolds with positive scalar curvature can have arbitrarily large diameter. 

As such, we develop a slice-and-dice method that we use to decompose the $3$-dimensional $\mu$-bubble $M_3$ into pieces of controlled diameter and overlap. To do so, we first slice the $\mu$-bubble by $2$-dimensional spheres chosen to simplify $H_2(M_3)$ appropriately. Then, we dice the resulting manifold with boundary, using free boundary $\mu$-bubbles (if we did not slice first, the $\mu$-bubble argument would only produce a decomposition into regions with bounded distance to the boundary, but if there are many boundary components, such a region could have arbitrarily large diameter). Because we arrange that the free boundary $\mu$-bubbles are of disk type, this dices $M_3$ into connected components of controlled diameter, and so that the overlap of the sets (along their boundaries) has controlled diameter. This is sufficient to complete the fill-in argument explained for $n=4$ above.

We note that the slice-and-dice method can be used to show that a compact $3$-manifold with positive scalar curvature can be mapped to a ($1$-dimensional) graph by a map with pre-images of uniformly bounded diameter, or equivalently that such a manifold has bounded first Uryson width. See also \cite{GL:complete,Katz1988diameter,Guth2011volume} for related results. Y.\ Liokumovich and D.\ Maximo have recently obtained a related (but strictly more general) result along these lines; it would be interesting to compare the two approaches.  \\

Finally we briefly motivate the proof of Theorem \ref{thm.pmt-bend}. We again seek a $\mu$-bubble representing a suitable homology class and apply the Schoen-Yau descent argument (in fact, we argue slightly differently to this, by passing to a certain covering space so that we can consider boundaries rather than arbitrary homological relations). The key point in this argument is then to find the correct prescribing mean curvature function. 

A basic difficulty present here (as compared to the previous discussion) is that we are only assuming positivity of the scalar curvature rather than uniformly positive scalar curvature. The standard $\mu$-bubble technique (like the one we use in Theorem \ref{thm.main}) could potentially fail if---for example---the scalar curvature decayed faster than quadratically. However, because there is a sufficiently nice covering space, we can make the argument work (morally, this has to do with the fact that the torus has sufficiently complicated topology). We note that a similar argument (cf.\ Theorem \ref{thm.SYS} and \cite{Chen:SYS}) extends Theorem \ref{thm.pmt-bend} to manifolds in the form $M \# X$, where $M$ is a Schoen--Yau--Schick manifold (see \cite[Section 5]{Gromov:metric-inequalities}) and $X$ is arbitrary.  \\

The paper is organized as follows. We prove Theorem \ref{thm.main} in Section 2-6. In Section \ref{sec:top}, we provide some topological preliminaries concerning aspherical $n$-manifolds. In Section \ref{sec:warped-mu-bub} we discuss $\mu$-bubbles. and in Section \ref{sec:fb-mu-bub} we discuss the free boundary version of $\mu$-bubbles. In Section \ref{sec:diam-bds} we prove diameter bounds that will apply to stable $2$-dimensional $\mu$-bubbles (with  free boundary or not). Then in Section \ref{sec:proof-of-thm-main} we prove Theorem \ref{thm.main} by the slice-and-dice method mentioned above. We prove Theorem \ref{thm.pmt-bend} in Section \ref{sec:proof-pmt-bend}. In Section \ref{section.SYS}, we discuss an extension of Theorem \ref{thm.pmt-bend} to Schoen--Yau--Schick manifolds. \\

The authors were informed by X. Zhou that recently, S.-T. Yau and X. Zhou made some progress (independent of this paper) towards completing the Schoen--Yau outline of Theorem \ref{thm.main} in dimension $n=4$.

\subsection{Acknowledgements} We thank the referees for their helpful suggestions. We are grateful to Brian White useful discussions about isoperimetric inequalities, and to Boyu Zhang for several helpful conversations about $4$-manifold topology. We would like to thank Christos Mantoulidis, Davi Maximo, Misha Gromov, and Fernando Cod\'a Marques for their interest as well as Richard Schoen for bringing the $K(\pi,1)$ question to our attention and for his constant encouragement. 

We are grateful to Martin Lesourd for answering several questions concerning their work \cite{LUY:liouville}. Finally, the second named author wants to thank Xin Zhou for sharing his insights on the Schoen-Yau survey paper \cite{SY:sketch} in the early stages of this work.

O.C\  was supported in part by a Terman Fellowship, a Sloan Fellowship, NSF grants DMS-1811059/2016403 and 2304432. C.L.\ was supported by NSF grant DMS-2005287.

\section{Topological preliminaries}\label{sec:top}

In this section we collect some basic topological facts on closed aspherical manifolds. Let $N$ be a smooth connected $n$-manifold. $N$ is called aspherical, if its homotopy groups $\pi_j(N)$ is trivial for all integers $j > 1$. Equivalently, $N$ is a Eilenberg--MacLane space $K(\pi,1)$, where $\pi$ is the fundamental group of $N$. It is a standard result that $N$ is aspherical if and only if its universal cover, $\tilde{N}$, is contractible \cite[Theorem 4.5]{Hatcher2002AlgebraicTopology}. 

We fix $(N,g)$ a compact aspherical Riemannian $n$-manifold and denote by $\tilde N$ its universal cover. The following is standard. 
\begin{lemm}
$\tilde N$ is non-compact.
\end{lemm}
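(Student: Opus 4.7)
The plan is to derive a contradiction from the top-dimensional homology of closed manifolds. Suppose, for contradiction, that $\tilde N$ is compact. Since the covering projection $\tilde N \to N$ is a local diffeomorphism and $N$ is closed, $\tilde N$ is a nonempty closed smooth $n$-manifold. Because $N$ is aspherical and $\tilde N$ is simply connected, the covering space isomorphism $\pi_k(\tilde N) \cong \pi_k(N) = 0$ for $k \geq 2$ shows $\tilde N$ is weakly contractible; being a CW complex, Whitehead's theorem then gives that $\tilde N$ is contractible.

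Now invoke the standard fact that any nonempty closed topological $n$-manifold $M$ with $n \geq 1$ satisfies $H_n(M;\mathbb{Z}/2\mathbb{Z}) \cong \mathbb{Z}/2\mathbb{Z}$, carried by the mod-$2$ fundamental class (this version avoids orientability issues, which is convenient since we do not want to bother passing to an orientation double cover). Applied to $\tilde N$, this yields $H_n(\tilde N;\mathbb{Z}/2\mathbb{Z}) \neq 0$. On the other hand, contractibility forces all reduced homology groups of $\tilde N$ to vanish, giving $H_n(\tilde N;\mathbb{Z}/2\mathbb{Z}) = 0$, a contradiction. Hence $\tilde N$ cannot be compact.

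I expect essentially no obstacle here; this is a one-line application of classical algebraic topology. The only minor point worth flagging is the orientability subtlety, which is why I would use $\mathbb{Z}/2$ coefficients rather than integer coefficients. An alternative route of equal simplicity would be to note that a compact contractible manifold without boundary must be a point (dimension zero), which is incompatible with $n \geq 1$ (and in particular with the values $n \in \{4,5\}$ relevant to Theorem \ref{thm.main}).
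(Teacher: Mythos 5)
Your proof is correct and is essentially the paper's own argument: contractibility of $\tilde N$ forces $H_n(\tilde N;\ZZ_2)=0$, while compactness would force $H_n(\tilde N;\ZZ_2)=\ZZ_2$, a contradiction. The extra details you supply (Whitehead's theorem, the mod-$2$ fundamental class to sidestep orientability) are fine but not a different route.
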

\begin{proof}
Because $\tilde N$ is contractible $H_n(\tilde N,\ZZ_2) = 0$. However, any compact $n$-manifold has $H_n(\tilde N,\ZZ_2) = \ZZ_2$.
\end{proof}

The next lemma is well known (and holds for the universal cover of any compact manifold). We recall the proof here for completeness. 

\begin{lemm}
There exists a geodesic line $\sigma : \RR \to (\tilde N,g)$ , i.e., $d(\sigma(t_1),\sigma(t_2)) = |t_2-t_1|$ for all $t_1,t_2\in\RR$. 
\end{lemm}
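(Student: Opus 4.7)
The plan is to construct $\sigma$ as a limit of longer and longer minimizing geodesic segments, translated by deck transformations so that their midpoints lie in a fixed compact set. The compactness of $N$ (and the resulting cocompactness of the deck group action on $\tilde N$) is what makes this extraction possible.

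First, I would exploit the previous lemma to pick a sequence $p_k \in \tilde N$ with $\ell_k := d(p_0, p_k) \to \infty$, where $p_0 \in \tilde N$ is a fixed basepoint. Since $(N,g)$ is closed, it is complete, and the covering map $\tilde N \to N$ is a local isometry, so $(\tilde N, g)$ is also complete. By Hopf--Rinow, for each $k$ there is a unit-speed minimizing geodesic $\gamma_k : [0, \ell_k] \to \tilde N$ from $p_0$ to $p_k$.

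Next, let $\Gamma = \pi_1(N)$ act by deck transformations on $\tilde N$, and fix a compact set $K \subset \tilde N$ with $\Gamma \cdot K = \tilde N$ (e.g., the closure of a fundamental domain, whose existence follows from compactness of $N$). Let $m_k = \gamma_k(\ell_k/2)$ be the midpoint and choose $\phi_k \in \Gamma$ so that $\phi_k(m_k) \in K$. Define
\[
\tilde\gamma_k(t) := \phi_k\bigl(\gamma_k(t + \ell_k/2)\bigr), \qquad t \in [-\ell_k/2, \ell_k/2].
\]
Since deck transformations are isometries, each $\tilde\gamma_k$ is still a unit-speed minimizing geodesic, now with $\tilde\gamma_k(0) \in K$.

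Finally, I would extract a convergent subsequence. The initial data $(\tilde\gamma_k(0), \dot{\tilde\gamma}_k(0))$ lies in the unit tangent bundle over $K$, which is compact, so after passing to a subsequence they converge to some $(q, v)$. By continuous dependence of the geodesic flow on initial conditions (together with completeness), the geodesics $\tilde\gamma_k$ converge uniformly on compact subsets to the geodesic $\sigma : \RR \to \tilde N$ with $\sigma(0) = q$, $\dot\sigma(0) = v$. Because each $\tilde\gamma_k$ is minimizing on $[-\ell_k/2, \ell_k/2]$ and $\ell_k \to \infty$, passing to the limit shows that for any $t_1 < t_2$ one has $d(\sigma(t_1), \sigma(t_2)) = |t_2 - t_1|$, so $\sigma$ is a line.

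There is no real obstacle here beyond bookkeeping: one just needs to verify that (i) the midpoints can indeed be translated into a common compact set using cocompactness of $\Gamma$, and (ii) the minimizing property is preserved under uniform-on-compacta limits of curves whose domains of minimality exhaust $\RR$. Both are standard.
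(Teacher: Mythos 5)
Your proof is correct and takes essentially the same approach as the paper: extract a geodesic line as a subsequential limit of minimizing geodesics recentered by deck transformations, using compactness of $N$ to keep the recentered base points in a fixed compact set. The only structural difference is that you recenter the midpoints of the minimizing segments directly, whereas the paper first passes to a limiting ray $\sigma'$ and then recenters points $\sigma'(t_i)$ along that ray---both variants are standard and equivalent.
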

\begin{proof}
Fix $p \in \tilde N$, and choose $p_i \in \tilde N$ diverging. Let $\sigma_i$ denote minimizing geodesics between $p$ and $p_i$. Passing to a subsequence, $\sigma_i$ converge to $\sigma' : [0,\infty) \to\tilde N$ a minimizing ray. For $t_i\to\infty$ choose deck transformations $\Phi_i$ so that $d(p,\Phi_i(\sigma'(t_i)))$ is uniformly bounded. Then, $\sigma_i'(t) = \Phi_i(\sigma'(t+t_i))$ subsequentially converges to a geodesic line $\sigma$.
\end{proof}

Choose two smooth functions $\rho_1,\rho_2 : N\to\RR$ so that 
\[
|\rho_1(p) - d(p,\sigma([0,\infty))| \leq 1, \qquad |\rho_2(p) - d(p,\sigma(0))| \leq 1.
\]
For a large regular value of $\rho_1$, $L_1\gg1$, set $U : = \rho_1^{-1}((-\infty,L_1))$. By construction, $\partial U$ is a smooth properly embedded hypersurface. 

\begin{lemm}\label{lemm:intersection-sigma-M-0-bounded-set}
For $L_2$ sufficiently large, $\sigma \cap \partial U \subset \rho_2^{-1}((-\infty,L_2])$. 
\end{lemm}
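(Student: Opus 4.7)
The plan is to exploit the fact that $\sigma$ is a two-sided geodesic line (not merely a ray), which lets us compute explicitly both distances $d(\sigma(t),\sigma([0,\infty)))$ and $d(\sigma(t),\sigma(0))$ appearing in the definitions of $\rho_1$ and $\rho_2$. For $t \geq 0$ we have $\sigma(t) \in \sigma([0,\infty))$, so $d(\sigma(t),\sigma([0,\infty)))=0$. For $t<0$, the line property $d(\sigma(s),\sigma(t))=|s-t|$ for all $s,t\in\RR$ means the function $s\mapsto d(\sigma(t),\sigma(s)) = s-t$ (for $s\geq 0$) is minimized at $s=0$, so $d(\sigma(t),\sigma([0,\infty)))=|t|$. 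Similarly $d(\sigma(t),\sigma(0))=|t|$ for all $t\in\RR$.

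Combining this with $|\rho_1(p)-d(p,\sigma([0,\infty)))|\leq 1$ gives $\rho_1(\sigma(t))\leq 1$ whenever $t\geq 0$, and $\bigl|\rho_1(\sigma(t))-|t|\bigr|\leq 1$ whenever $t\leq 0$. Because $L_1 \gg 1$, the first inequality shows $\sigma(t)\in U$ for all $t\geq 0$, hence $\sigma(t)\notin\partial U$ on the ray. For $t\leq 0$, the condition $\sigma(t)\in\partial U$ forces $\rho_1(\sigma(t))=L_1$ and therefore $|t|\leq L_1+1$. Plugging this into $|\rho_2(\sigma(t))-|t||\leq 1$ yields $\rho_2(\sigma(t))\leq |t|+1\leq L_1+2$, so the lemma holds with $L_2:=L_1+2$ (or any larger constant).

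There is no real obstacle here: the only step that uses anything beyond the defining inequalities is the identification of $\sigma(0)$ as the nearest point on $\sigma([0,\infty))$ to $\sigma(t)$ for $t<0$, which is immediate from the line property. The role of the lemma is simply to record that the piece of $\sigma$ crossing the level set $\partial U = \{\rho_1 = L_1\}$ stays in a bounded $\rho_2$-region, which will later let us localize intersection numbers of $\sigma$ with $\mu$-bubbles supported in $U$.
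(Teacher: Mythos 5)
Your proposal is correct and takes essentially the same route as the paper: both arguments use the geodesic-line property $d(\sigma(t_1),\sigma(t_2))=|t_1-t_2|$ to show that $\sigma(t)$ can only lie on $\partial U\subset\rho_1^{-1}(L_1)$ when $t\in[-(L_1+O(1)),0]$, and then convert this parameter bound into a bound on $\rho_2$ via $|\rho_2(p)-d(p,\sigma(0))|\le 1$. The only (cosmetic) difference is that you compute $d(\sigma(t),\sigma([0,\infty)))=|t|$ exactly and read off the explicit constant $L_2=L_1+2$, whereas the paper runs the same estimate as a short proof by contradiction and leaves the admissible $L_2$ implicit.
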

\begin{proof}
Observe that $\sigma([0,\infty]) \subset U$.  Moreover, $\sigma((-\infty,-L_1-4]) \cap U = \emptyset$. Indeed, if $t_1 < -L_1-4$ has $\sigma(t_1) \in U$ then 
\[
d(\sigma(t_1),\sigma([0,\infty)) \leq L_1 + 2
\]
As such, there is $t_2 \geq 0$ with
\[
L_1+4 \leq |t_1 - t_2| = d(\sigma(t_1),\sigma(t_2)) \leq L_1 + 3.
\]
This is a contradiction. Thus $\sigma \cap \partial U$ is contained in a compact set. This completes the proof. 
\end{proof}

Fix a regular value $L_2 \gg L_1$ of the function $\rho_2|_{\partial U}$ and set 
\[
M = (\partial U) \cap \rho_2^{-1}((-\infty,L_2)).
\]
Note that $M$ is a smooth properly embedded compact oriented hypersurface with boundary. Perturb $\sigma$ slightly so that it intersects $M$ transversely (and $\sigma \cap \partial U = \sigma \cap M$) and we still have
\[
|\rho_1(p) - d(p,\sigma([0,\infty))| \leq 2, \qquad |\rho_2(p) - d(p,\sigma(0))| \leq 2.
\]
We now verify several properties of $M$ and $\sigma$, as illustrated by Figure \ref{pic2_construction_M}.

\begin{figure}[htbp]
	\centering
	\includegraphics[width=\textwidth]{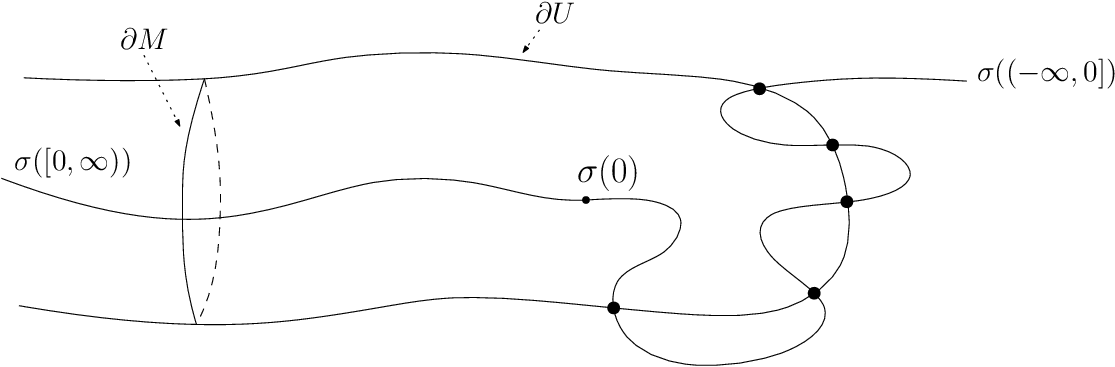}
	\caption{The construction of $\sigma$ and $M$}
	\label{pic2_construction_M}
\end{figure}

\begin{lemm}\label{lemm:intersection-sigma-M-0}
For $L_2$ sufficiently large, the curve $\sigma$ has non-zero algebraic intersection with $M$.
\end{lemm}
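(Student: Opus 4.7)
The plan is to exploit the fact that $\sigma$ starts outside $U$ (for $t$ very negative) and ends inside $U$ (for $t\ge 0$), forcing a nonzero signed count of crossings with $\partial U$, and then to ensure that every such crossing actually lies inside $M$ by choosing $L_2$ sufficiently large.

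First, I would observe that $\sigma\cap\partial U$ is a finite set. By Lemma~\ref{lemm:intersection-sigma-M-0-bounded-set} it is contained in a compact subset of $\tilde N$, and by the transversality of $\sigma$ and $\partial U$ (arranged by the small perturbation of $\sigma$), the intersection is $0$-dimensional, hence finite. Since $\rho_2$ is continuous, it attains a maximum on the finite set $\sigma\cap\partial U$; I would then choose the regular value $L_2$ of $\rho_2|_{\partial U}$ to exceed this maximum, so that $\sigma\cap\partial U\subset \rho_2^{-1}((-\infty,L_2))$ and therefore $\sigma\cap M=\sigma\cap\partial U$. This reduces the problem to computing the algebraic intersection of $\sigma$ with $\partial U$.

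To compute this, I would orient $\partial U$ by the outward unit normal $\nu$ to $U$ (and carry this orientation to $M$). At each transverse crossing $\sigma(t_i)\in\partial U$ the local sign is $\sgn\langle\sigma'(t_i),\nu(\sigma(t_i))\rangle$, equal to $+1$ when $\sigma$ is leaving $U$ and $-1$ when $\sigma$ is entering $U$. Summing over the finite set of crossings telescopes to
\[
\sigma\cdot M \;=\; \#\{\text{exits from }U\} - \#\{\text{entries into }U\}.
\]
From the proof of Lemma~\ref{lemm:intersection-sigma-M-0-bounded-set}, $\sigma((-\infty,-L_1-4])\subset\tilde N\setminus U$, while $\sigma([0,\infty))\subset U$ because $\rho_1\le 1<L_1$ there; hence the number of entries exceeds the number of exits by exactly one, so $\sigma\cdot M = -1 \ne 0$.

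I do not foresee a serious obstacle here: the argument is essentially a one-dimensional intermediate-value/degree calculation. The only care required is in the bookkeeping of orientations and in invoking Sard's theorem to choose the regular value $L_2$ strictly above the finite maximum of $\rho_2$ on $\sigma\cap\partial U$.
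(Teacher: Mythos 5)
Your proof is correct and takes essentially the same approach as the paper: both exploit that $\sigma$ lies outside $U$ for large negative time and inside $U$ for $t\ge 0$, so the signed crossings with $\partial U$ telescope to $\pm 1$, and that $\sigma\cap\partial U = \sigma\cap M$ (in the paper this is built into the construction via Lemma~\ref{lemm:intersection-sigma-M-0-bounded-set} and the choice of $L_2$; you re-derive it). Your writeup is somewhat more explicit about the finiteness of the crossing set and the orientation bookkeeping, but the underlying argument is the same.
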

\begin{proof}
The curve $\sigma(t)$ leaves and then re-enters $U$ in (oppositely oriented) pairs until the smallest intersection time $t$, after which it never intersects $\partial U$. Because $\sigma \cap \partial U = \sigma \cap M$ this proves the assertion. 
\end{proof}

\begin{lemm}
For $L_2$ sufficiently large, $\partial M \neq \emptyset$ and $d(\partial M,\sigma(\RR)) \geq L_1 - 4$. 
\end{lemm}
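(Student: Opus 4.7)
The plan is to prove the two conclusions separately: the non-emptiness of $\partial M$ is a topological obstruction, while the distance bound is a direct estimate.

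For $\partial M \neq \emptyset$, I would argue by contradiction. Suppose $\partial M = \emptyset$. Then $M$ is a compact hypersurface without boundary in $\tilde N$: indeed, $\rho_2^{-1}((-\infty, L_2])$ is a closed subset of the closed metric ball of radius $L_2 + 2$ about $\sigma(0)$, hence compact by completeness of $\tilde N$, and the topological boundary $\bar M \setminus M$ is contained in $\rho_1^{-1}(L_1) \cap \rho_2^{-1}(L_2) = \partial M = \emptyset$. Since $\tilde N$ is contractible, $H_{n-1}(\tilde N; \ZZ) = 0$, so $[M] = 0$ and $M$ bounds some compact region $\Omega \subset \tilde N$. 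The geodesic line $\sigma \colon \RR \to \tilde N$ is proper (as the ambient metric is complete), so $\sigma^{-1}(\Omega)$ is compact and $\sigma(t) \notin \Omega$ for $|t|$ sufficiently large. Transversality of $\sigma$ to $M = \partial \Omega$ then gives algebraic intersection $\sigma \cdot M = (\text{exits}) - (\text{entries}) = 0$, contradicting Lemma~\ref{lemm:intersection-sigma-M-0}.

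For $d(\partial M, \sigma(\RR)) \geq L_1 - 4$, fix $p \in \partial M$, so $\rho_1(p) = L_1$ and $\rho_2(p) = L_2$; the perturbation bounds give $d(p, \sigma([0, \infty))) \geq L_1 - 2$ and $d(p, \sigma(0)) \geq L_2 - 2$. I would estimate $d(p, \sigma(t))$ regime-by-regime. For $t \geq 0$, directly $d(p, \sigma(t)) \geq d(p, \sigma([0, \infty))) \geq L_1 - 2$. For $t < 0$ with $|t| \leq L_2 - L_1$, triangle inequality yields $d(p, \sigma(t)) \geq d(p, \sigma(0)) - d(\sigma(0), \sigma(t)) \geq L_2 - |t| - C \geq L_1 - C$ for a universal $C$ absorbing the perturbation errors. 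For $t < 0$ with $|t| \geq L_2 + L_1$, the reverse triangle inequality gives $d(p, \sigma(t)) \geq d(\sigma(0), \sigma(t)) - d(p, \sigma(0)) \geq |t| - L_2 - C \geq L_1 - C$. In the intermediate range $L_2 - L_1 < |t| < L_2 + L_1$, where both $\rho_2$-based bounds degenerate, I would switch to $\rho_1$: since $\sigma$ is (almost) a geodesic line, $d(\sigma(t), \sigma([0, \infty))) \geq |t|$ for $t < 0$, so $\rho_1(\sigma(t)) \geq |t| - 2 > L_2 - L_1 - 2$, and the almost-$1$-Lipschitz estimate $|\rho_1(p) - \rho_1(\sigma(t))| \leq d(p, \sigma(t)) + 4$ (picking up the $\leq 2$ slack on each side) forces $d(p, \sigma(t)) \geq L_2 - 2L_1 - C$. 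Choosing $L_2$ large enough in terms of $L_1$ (e.g.\ $L_2 \geq 3L_1 + C'$ for an absolute constant $C'$) makes every regime deliver at least $L_1 - 4$.

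The main obstacle I foresee is the topological step: carefully verifying that $\partial M = \emptyset$ yields a closed compact hypersurface, that contractibility of $\tilde N$ forces it to bound a compact region, and that properness of $\sigma$ then gives zero net flux across this boundary. The distance estimate is routine bookkeeping, but it hinges on the observation that $\rho_1$-control must take over precisely where $\rho_2$-control degenerates, namely around $|t| \approx L_2$.
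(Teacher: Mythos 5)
Your proposal is correct and essentially follows the paper's own argument: the nonemptiness of $\partial M$ comes from pairing Lemma \ref{lemm:intersection-sigma-M-0} with $H_{n-1}(\tilde N)=0$ (you merely run the contradiction through the compact bounding region and the properness of $\sigma$, rather than concluding $[M]\neq 0$ directly), and the distance bound is the same $\rho_1,\rho_2$ triangle-inequality bookkeeping under the requirement $L_2\gtrsim 3L_1$. The only cosmetic difference is in handling $t<0$: the paper does it in one stroke by choosing $t_1\geq 0$ with $d(p,\sigma(t_1))\leq L_1+2$ and playing $|t_1-t_2|\leq 2L_1-1$ against $|t_2|\geq L_2-L_1+1$, whereas you split into three regimes and use the value of $\rho_1$ along $\sigma$ in the intermediate one.
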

\begin{proof}
Note that $\partial M \subset \rho_2^{-1}(L_2)$. First, suppose that $\partial M = \emptyset$. Then, by Lemma \ref{lemm:intersection-sigma-M-0}, we could conclude that $[M] \neq 0 \in H_{n-1}(\tilde N) = 0$. This is a contradiction. 

Suppose there is $t_2 \in \RR$ and $p\in \partial M$ with $d(p,\sigma(t_2)) \leq L_1 - 3$. Suppose that $t_2 \geq 0$. Then,
\[
\rho_1(p) \leq d(p,\sigma([0,\infty))) + 2 \leq L_1 - 1. 
\]
This cannot hold (since $\partial M \subset \rho_1^{-1}(L_1)$). As such, we see that $t_2 < 0$. Moreover, since $d(\partial M,\sigma(0)) \geq L_2 -2$, we find
\[
|t_2| = d(\sigma(t_2),\sigma(0)) \geq d(p,\sigma(0)) - d(p,\sigma(t_2)) \geq L_2 - L_1 + 1.
\]
On the other hand, we know there must be $t_1 \geq 0$ with $d(p,\sigma(t_1)) \leq L_1 + 2$. We have
\begin{multline*}
L_2 - L_1 + 1 \leq |t_1 - t_2| = d(\sigma(t_1),\sigma(t_2))  \\
\leq d(p,\sigma(t_1)) + d(p,\sigma(t_2)) \leq 2L_1 - 1. 
\end{multline*}
This is a contradiction as long as $L_2 > 3L_1 - 2$.
\end{proof}

\begin{prop}\label{prop:fill-bd-diam}
For $r>0$ there is $R=R(r)$ with the following property. Suppose that $\alpha$ is a $k$-cycle in $\tilde N$ with $\alpha \subset B_r(p)$ for some $p\in\tilde N$. Then $\alpha = \partial \beta$ for $\beta \subset B_R(p)$. 
\end{prop}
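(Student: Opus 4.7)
The plan is to combine the cocompact isometric action of $\pi_1(N)$ on $\tilde N$ with the contractibility of $\tilde N$. Cocompactness lets one uniformly replace $p$ by a point in a fixed compact set; finite generation of homology on a compact reference region then reduces the problem to filling finitely many generating cycles, each of which bounds by contractibility of $\tilde N$.

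First, fix a compact set $K\subset\tilde N$ whose $\pi_1(N)$-translates cover $\tilde N$ (for example, a closed fundamental domain) and a basepoint $p_0\in K$. For any $p\in \tilde N$ and any cycle $\alpha\subset B_r(p)$, choose a deck transformation $\phi$ with $\phi(p)\in K$; since $\phi$ is an isometry, a filling of $\phi(\alpha)\subset B_r(\phi(p))$ in $B_R(\phi(p))$ pulls back through $\phi^{-1}$ to a filling of $\alpha$ in $B_R(p)$. Hence we may assume $p\in K$, and in particular $B_r(p)\subset B_{r_1}(p_0)$ with $r_1:=r+\diam(K)$. Fix once and for all a compact smooth submanifold-with-boundary $W\subset\tilde N$ containing $\overline{B_{r_1}(p_0)}$, obtained, say, by picking a regular value slightly bigger than $r_1$ of a smooth approximation to $d(p_0,\cdot)$.

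Then $H_k(W;\ZZ)$ is finitely generated, so pick cycles $\alpha_1,\dots,\alpha_m$ representing generators of $H_k(W;\ZZ)$. By contractibility of $\tilde N$, each $\alpha_j$ bounds a (compactly supported singular) chain $\beta_j\subset\tilde N$. Let $R'$ be large enough that $\bigcup_j \supp\beta_j\subset B_{R'}(p_0)$. For any $k$-cycle $\alpha\subset B_r(p)\subset W$, write $[\alpha]=\sum c_j[\alpha_j]$ in $H_k(W)$, so $\alpha-\sum c_j\alpha_j=\partial\gamma$ with $\gamma\subset W$. Therefore $\alpha=\partial(\gamma+\sum c_j\beta_j)$, with support inside $B_{\max(r_1,R')}(p_0)\subset B_R(p)$ for $R:=\max(r_1,R')+\diam(K)$.

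I expect the only subtle point is ensuring the reference homology used in the middle step is finitely generated: the singular homology of an arbitrary bounded open subset of $\tilde N$ can fail to be finitely generated, so one must thicken $\overline{B_{r_1}(p_0)}$ to a compact submanifold with boundary rather than use the metric ball itself. Once that is arranged, the crucial linearity observation is that the coefficients $c_j$ can be arbitrary integers without enlarging the filling: all of $\beta_1,\dots,\beta_m$ already sit inside the single ball $B_{R'}(p_0)$, so the resulting $R$ depends only on $r$. A clean alternative that avoids the smoothing/Sard step is to lift a finite smooth triangulation of $N$ to a $\pi_1(N)$-equivariant locally finite triangulation of $\tilde N$ with uniformly bounded simplex diameter and run the same argument with cellular chains on the finite subcomplex containing $B_{r_1}(p_0)$.
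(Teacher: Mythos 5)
Your argument is correct, but its central mechanism differs from the paper's. Both proofs begin the same way, using a deck transformation to move $p$ to within bounded distance of a fixed basepoint $p_0$, so that all constants depend only on $r$ and $N$. For the filling step, however, the paper argues directly from a contraction of $\tilde N$: restricting a null-homotopy of the identity to the compact set $\overline{B_{r}(p_0)}$ gives an image contained in some $B_{R_0(r)}(p_0)$, so the inclusion $B_r(p_0)\hookrightarrow B_{R_0}(p_0)$ induces the zero map on $H_k$ for $k>0$, and any cycle in the small ball bounds in the big one --- no thickening, no finite generation, no choice of generators needed. You instead thicken $\overline{B_{r_1}(p_0)}$ to a compact submanifold-with-boundary $W$, use that $H_k(W;\ZZ)$ is finitely generated, pre-fill a finite generating set $\alpha_1,\dots,\alpha_m$ inside one fixed ball $B_{R'}(p_0)$ using $H_k(\tilde N)=0$, and exploit linearity; your observation that the integer coefficients $c_j$ can be arbitrarily large without enlarging the support of $\sum_j c_j\beta_j$ is exactly what makes this work, and your caution about metric balls possibly having non-finitely-generated homology is a real issue for your route (which the paper's route sidesteps entirely). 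What your version buys is slightly weaker hypotheses --- it only uses vanishing of $H_k(\tilde N)$ in the relevant degrees together with cocompactness, rather than an actual contraction --- at the cost of the extra thickening/finite-generation bookkeeping; the paper's version is shorter and more uniform in $k$. One small point to tidy: your chain $\gamma$ lies in $W$, which may stick slightly outside $B_{r_1}(p_0)$, so the final radius should be stated in terms of a bound on $W$ (e.g. $W\subset B_{r_1+1}(p_0)$) rather than $\max(r_1,R')$ verbatim; this is cosmetic and does not affect the proof.
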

\begin{proof}
Fix $p_0\in\tilde N$. Since $\tilde N$ is contractible, for $r>0$, $B_r(p_0)$ is contractible in $B_{R_0(r)}(p_0)$ for some function $R_0=R_0(r)<\infty$. In particular $H_k(B_r(p_0)) \to H_k(B_{R_0}(p_0))$ is the zero map, for $k>0$. 

For any $p\in \tilde N$, there is a deck transformation $\Psi$ so that $d(p_0,\Psi(p)) \leq \diam N$. As such, $\Psi(B_r(p)) \subset B_{r+\diam N}(p_0)$. There is $\beta \in B_{R_0(r+\diam N)}(p_0) \subset B_{R_0(r+\diam N)+\diam N}(\Psi(p))$ with $\partial \beta = \Psi(\alpha)$. As such 
\[
\Psi^{-1}(\beta) \subset B_{R_0(r+\diam N)+\diam N}(p)
\]
has $\partial (\Psi^{-1}(\beta)) = \alpha$. This completes the proof. 
\end{proof}

\section{Warped $\mu$-bubbles}\label{sec:warped-mu-bub}
In this section we recall general existence and stability results for warped $\mu$-bubbles. For $n\leq 7$, consider $(M,g)$ a Riemannian $n$-manifold with boundary and assume that $\partial M = \partial_- M \cup \partial_+ M$ is a choice of labeling the components of $\partial M$ so that neither of the sets $\partial_\pm M$ are empty. Fix a smooth function $u > 0$ on $M$ and a smooth function $h$ on $\mathring M$ with $h\to \pm \infty$ on $\partial_\pm M$. 
Choose a Caccioppoli set $\Omega_0$ with smooth boundary $\partial\Omega_0 \subset \mathring M$ and $\partial_+ M\subset \Omega_0$. 
Consider the following functional
\begin{equation}\label{problem.variation}
\cA(\Omega)=\int_{\partial^* \Omega} u\, d\cH^{n-1} - \int_M (\chi_\Omega-\chi_{\Omega_0})hu \, d\cH^n,
\end{equation}
for all Caccioppoli sets $\Omega$ in $M$ with $\Omega\Delta \Omega_0\Subset \mathring M$. We will call $\Omega$ minimizing $\cA$ in this class a $\mu$-bubble. 
\begin{rema}
 Geometrically, this is equivalent to the functional
\begin{equation}
\tilde{\cA}(\Omega\times S^1)= \cH^{n-1}(\partial^*(\Omega\times S^1))- \int_{\Omega\times S^1} (\chi_{\Omega\times S^1}-\chi_{\Omega_0\times S^1})\tilde{h} \, d\cH^n,
\end{equation}
for $S^1$-invariant Caccioppoli sets $\Omega\times S^1$ inside $(M_1\times S^1,\tilde{g})$, where $\tilde{g}$ is the warped product metric $\tilde{g}=g+u^2 dt^2$, and $\tilde{h}$ is defined on $M\times S^1$ by $\tilde{h}(x,t)=h(x)$. However, we find it simplest to work with the form \eqref{problem.variation} instead of the warped product formulation. 
\end{rema}
\subsection{Existence of $\mu$-bubbles}  The existence and regularity of a minimizer of $\tilde{\cA}$ among all Caccioppoli sets (without any equivarient assumptions) was claimed by Gromov in \cite[Section 5.1]{gromov2019lectures}, and was rigorously carried out by Zhu in \cite[Proposition 2.1]{zhu2020width}. For the sake of completeness, we include a proof here.
\begin{prop}\label{prop.existence.regularity}
There exists a smooth minimizer $\Omega$ for $\cA$ such that $\Omega\Delta \Omega_0$ is compactly contained in the interior of $M$.
\end{prop}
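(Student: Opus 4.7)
The plan is the direct method in BV, with the blow-up of $h$ at $\partial M$ furnishing \emph{a priori} barriers that confine minimizing configurations to a compact subset of $\mathring M$.

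\textbf{Step 1 (Barriers).} The first task is to exhibit a compact set $K\Subset \mathring M$ such that any admissible competitor $\Omega$ can be replaced, without increasing $\cA$, by a competitor $\Omega^\sharp$ with $\Omega^\sharp\Delta\Omega_0\subset K$. The key input is $h\to\pm\infty$ on $\partial_\pm M$. Concretely (assuming WLOG that $\Omega_0$ is disjoint from a neighborhood of $\partial_-M$), pick a smooth non-negative boundary-defining function $\varphi_-$ for $\partial_-M$ with $|\nabla\varphi_-|\ge c>0$ near $\partial_-M$, set $W_-^\delta=\{\varphi_-<\delta\}$, and fix $\delta$ so small that $W_-^\delta\cap\Omega_0=\emptyset$ and $h\le -H_0$ on $W_-^\delta$, where $H_0$ is to be chosen large. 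Since $\Omega=\Omega_0=\emptyset$ near $\partial_-M$, one has $\Omega\cap W_-^\delta\Subset \mathring M$; the coarea formula yields some $t_\ast\in(0,\delta)$ with
\begin{equation*}
\cH^{n-1}(\Omega\cap\{\varphi_-=t_\ast\})\le C\delta^{-1}\,|\Omega\cap W_-^\delta|.
\end{equation*}
Replacing $\Omega$ by $\Omega\setminus\{\varphi_-<t_\ast\}$ changes $\cA$ by at most
\begin{equation*}
\|u\|_{L^\infty}C\delta^{-1}\,|\Omega\cap W_-^\delta|-H_0(\min_{W_-^\delta}u)\,|\Omega\cap W_-^\delta|,
\end{equation*}
which is non-positive once $H_0$ is large enough. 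The symmetric argument near $\partial_+M$ forces an admissible replacement $\Omega^\sharp$ to contain a neighborhood of $\partial_+M$. (When $\Omega_0$ already contains a neighborhood of $\partial_-M$, the admissibility constraint $\Omega\Delta\Omega_0\Subset \mathring M$ gives the barrier for free.)

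\textbf{Step 2 (Compactness).} Take a minimizing sequence $\Omega_j$. By Step 1, we may assume the reduced boundaries $\partial^*\Omega_j$ all lie in a fixed compact $K\Subset \mathring M$ and that $\Omega_j\Delta\Omega_0\subset K$. Then the bulk term is bounded by $\|hu\|_{L^\infty(K)}\cdot|K|$, so $\cA(\Omega_j)\le \cA(\Omega_0)+1$ yields a uniform perimeter bound $\sup_j\int_{\partial^*\Omega_j}u\,d\cH^{n-1}<\infty$. BV compactness extracts a subsequential $L^1$-limit $\chi_{\Omega_\infty}$ with $\Omega_\infty\Delta\Omega_0\subset K$. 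The weighted perimeter is $L^1$-lower semicontinuous (for smooth positive $u$ this is classical Reshetnyak-type lower semicontinuity applied to $u\,d\cH^{n-1}$), and the bulk term is $L^1$-continuous since $hu$ is bounded on $K$. Hence $\Omega_\infty$ minimizes $\cA$ among admissible competitors, and $\Omega_\infty\Delta\Omega_0\Subset \mathring M$.

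\textbf{Step 3 (Regularity).} The minimizer is a local almost-minimizer of perimeter in $\mathring M$ with smooth data: computing the first variation from a one-parameter family of diffeomorphisms supported away from $\partial M$ gives the weak Euler-Lagrange equation
\begin{equation*}
H_{\partial\Omega_\infty}=h-u^{-1}\langle\nabla u,\nu\rangle.
\end{equation*}
Classical De Giorgi-Almgren-Federer regularity then yields that $\partial\Omega_\infty$ is smooth away from a closed singular set of Hausdorff dimension at most $n-8$, which is empty for $n\le 7$.

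The main technical point is the barrier argument in Step 1, which is where the singular behaviour of $h$ at $\partial M$ must be exploited; the remainder is a routine combination of BV compactness and classical regularity theory for almost-minimizers of perimeter.
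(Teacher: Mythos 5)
The overall architecture (truncation near $\partial_\pm M$, then BV compactness and lower semicontinuity, then almost-minimizer regularity for $n\le 7$) is the same as the paper's, and your Steps 2 and 3 are fine once Step 1 is in place. But Step 1 — which, as you say, is the heart of the matter — has a genuine gap. First, the displayed energy comparison is not correct: cutting $\Omega$ at the coarea level $t_\ast$ gains only $-H_0(\min u)\,\lvert\Omega\cap\{\varphi_-<t_\ast\}\rvert$ from the bulk term, i.e.\ the gain is proportional to the volume of $\Omega$ \emph{below} the slice, whereas your perimeter cost $\|u\|_{L^\infty}C\delta^{-1}\lvert\Omega\cap W_-^\delta\rvert$ is controlled by the volume in the \emph{whole} collar. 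These two quantities are not comparable (a competitor can carry most of its collar mass above the level $t_\ast$), so no choice of $H_0$ makes the difference non-positive. Second, even if one patches the comparison at some competitor-dependent level, the replacement $\Omega\setminus\{\varphi_-<t_\ast\}$ only clears a collar of thickness $t_\ast$, and $t_\ast$ (produced by averaging over $(0,\delta)$) can be arbitrarily small and varies with the competitor. Hence Step 1 does not produce a \emph{fixed} compact $K\Subset\mathring M$ containing $\Omega_j\Delta\Omega_0$ for a minimizing sequence, which is exactly what Step 2 assumes; without it the $L^1$-limit may stick to $\partial_\pm M$, in which case it is not an admissible competitor, the conclusion $\Omega\Delta\Omega_0\Subset\mathring M$ fails, and the Euler--Lagrange and regularity steps are not available near the contact set. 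Note that the blow-up of $h$ alone does not rescue this: if $h$ blows up like an inverse distance (as in the applications), a thin slab hugging $\partial_-M$ has bounded bulk penalty, so sequences can drift to the boundary at finite energy. (Also, your parenthetical claim that the case $\Omega_0\supset$ a neighborhood of $\partial_-M$ comes ``for free'' is off: the constraint $\Omega\Delta\Omega_0\Subset\mathring M$ is not uniform over competitors, and in that configuration the functional is in fact unbounded below, so that case must be excluded, not absorbed.)

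The fix is to cut at a \emph{fixed} level and control the new slice not by coarea averaging but by a calibration. This is what the paper does: take the collar $\Omega_-^\tau$ foliated by equidistant hypersurfaces with unit normal $\eta$, and choose $\tau$ so small that $hu<-\Div(u\eta)$ on $\Omega_-^\tau$ (and $hu>\Div(u\eta)$ on $\Omega_+^\tau$), which is possible since $h\to\mp\infty$ while $\Div(u\eta)$ stays bounded. Applying the divergence theorem to $u\eta$ on $\Omega\cap\Omega_-^\tau$ bounds the weighted area of the new slice minus the discarded perimeter by $\int_{\Omega\cap\Omega_-^\tau}\Div(u\eta)$, which the bulk gain beats pointwise; hence $\cA(\Omega\setminus\Omega_-^\tau)<\cA(\Omega)$ and $\cA(\Omega\cup\Omega_+^\tau)<\cA(\Omega)$ for \emph{every} admissible competitor. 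This reduces the minimization to the fixed class of sets containing $\Omega_+^\tau$ and disjoint from $\Omega_-^\tau$, after which your compactness and regularity steps go through verbatim. If you prefer your slicing language, you must at least replace the mean-value choice of $t_\ast$ by this fixed-level, divergence-theorem comparison; as written, the step would fail.
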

\begin{proof}
	Let $\Omega$ be a Caccioppoli set in $M$ such that $\Omega\Delta \Omega_0 \Subset \mathring M$. By a standard approximation argument, we can assume that $\Omega$ has smooth boundary. We first show that, by adding to $\Omega$ a neighborhood of $\partial_+ M$, and subtracting from it a neighborhood of $\partial_-M$, one decreases $\cA_h$. For $\tau>0$, denote by $\Omega_\pm^{\tau}$ the distance-$\tau$ neighborhood of $\partial_\pm$ in $M$. Choosing $\tau$ sufficiently small, $\Omega_\pm^{\tau}$ has a foliation $\{S_\pm^{\rho}\}_{\rho\in [0,\tau]}$ by smooth equidistant hypersurfaces to $\partial_\pm M$. Denote by $\eta$ the unit normal vector field of $\{S_\pm^\rho\}$ defined in this $\Omega_\pm^\tau$, pointing into $M$ along $\partial_\pm M$. Let $\tau>0$ be sufficiently small so that 
	\[h u > \Div (u \eta) \text{ in }\Omega_+^\tau, \quad h u < - \Div (u\eta) \text{ in }\Omega_-^\tau.\]
	
	We compute
	\begin{align*}
	& \cA(\Omega\cup \Omega_+^\tau)-\cA(\Omega)\\
	&=\int_{\partial^*(\Omega\cup \Omega_+^\tau)}u \, d\cH^{n-1} - \int_{\partial \Omega} u \, d\cH^{n-1}- \int_{\Omega_+^\tau\setminus \Omega} hu \, d\cH^n\\
	&= \int_{\partial \Omega_+^\tau \setminus \Omega} u \, d\cH^{n-1}- \int_{\partial \Omega\cap \Omega_+^\tau} u \, d\cH^{n-1}- \int_{\Omega_+^\tau\setminus \Omega} hu \, d\cH^n\\
	& < \int_{\partial \Omega_+^\tau \setminus \Omega} u \, d\cH^{n-1}- \int_{\partial \Omega\cap \Omega_+^\tau} u \, d\cH^{n-1}- \int_{\Omega_+^\tau\setminus \Omega} \Div(u \eta) \, d\cH^n. 
\end{align*}
Moreover,
\begin{align*}
	\int_{\Omega_+^\tau\setminus \Omega} \Div(u\eta) \, d\cH^n&= \int_{\partial \Omega_+^\tau\setminus \Omega} (\eta\cdot \nu) u \, d\cH^{n-1} - \int_{\partial \Omega\cap \Omega_+^\tau} (\eta\cdot\nu) u \, d\cH^{n-1}\\
& \geq \int_{\partial\Omega^\tau_+\setminus\Omega} u \, d\cH^{n-1} - \int_{\partial\Omega \setminus\Omega^\tau_+} u \, d\cH^{n-1}.
	\end{align*}
	Thus $\cA(\Omega\cap \Omega_+^\tau)< \cA(\Omega)$. By an analogous calculation, $\cA(\Omega\setminus \Omega_-^\tau)<\cA(\Omega)$. Hence it suffices to consider the infimum of $\cA$ among $\cC$, where $\cC$ is the collection of Caccioppoli sets that contain $\Omega_+^\tau$ and are disjoin from $\Omega_-^\tau$. Since $|hu|<C_1$ in $M\setminus(\Omega_+^\tau\cup \Omega_-^\tau)$, we conclude that $\cA(\Omega)>-C_1 \cH^n(M)$ for all  $\Omega\in \cC$. Hence $I=\inf \{\cA(\Omega): \Omega\in \cC\}$ exists. Take a sequence $\Omega_k\in \cC$ with $\cA(\Omega_k)\rightarrow I$. Then $\cH^{n-1}(\partial \Omega_k)<C(I+C_1\cH^n(M))$. By BV-compactness, taking a subsequence, the sets $\Omega_k$ converge to a Caccioppoli set $\Omega$. It follows that $\Omega$ is a minimizer of $\cA$, and thus has smooth boundary by standard regularity theory \cite{Tam}.
\end{proof}

\subsection{Stability} We now discuss the first and second variation for a warped $\mu$-bubble. 
\begin{lemm}\label{lemm:1st-var}
If $\Omega_t$ is a smooth $1$-parameter family of regions with $\Omega_0 = \Omega$ and normal speed $\psi$ at $t=0$, then 
\[\td{}{t}\cA (\Omega_t)=\int_{\Sigma_t} (Hu + \bangle{\nabla_M u, \nu} - hu)\psi  \, d\cH^{n-1}\]
where $H$ is the scalar mean curvature of $\partial\Omega_t$ and $\nu$ is the outwards pointing unit normal. In particular, a $\mu$-bubble $\Omega$ satisfies \[
H = -u^{-1} \bangle{\nabla_M u,\nu} + h
\]
along $\partial\Omega$. 
\end{lemm}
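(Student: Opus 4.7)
The plan is to compute the two terms in $\cA(\Omega_t)$ separately and recognize the resulting Euler--Lagrange expression. Write $\Sigma_t = \partial\Omega_t$ and let $X$ denote the deformation vector field associated to $\Omega_t$, so that along $\Sigma_0 = \Sigma$ the normal component of $X$ equals $\psi\nu$.

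First I would handle the weighted perimeter term $\int_{\Sigma_t} u\,d\cH^{n-1}$. The standard first variation formula for a weighted area functional on a smoothly deforming hypersurface gives
\[
\td{}{t}\bigg|_{t=0} \int_{\Sigma_t} u\, d\cH^{n-1} = \int_\Sigma \bigl(\bangle{\nabla_M u, X} + u\,\Div_\Sigma X\bigr)\,d\cH^{n-1}.
\]
Since only the normal component of $X$ contributes to the geometric first variation (tangential components amount to reparametrizations), I may replace $X$ by $\psi\nu$ in both terms. Using $\Div_\Sigma(\psi\nu) = \psi H$ with the convention $H = \Div_\Sigma \nu$ matching the sign in the statement, and $\bangle{\nabla_M u, \psi\nu} = \psi\bangle{\nabla_M u,\nu}$, this first piece contributes exactly $\int_\Sigma (Hu + \bangle{\nabla_M u,\nu})\psi\, d\cH^{n-1}$.

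Next I would differentiate the bulk term $\int_M (\chi_{\Omega_t} - \chi_{\Omega_0}) hu\,d\cH^n$. Because $\Omega_0$ is fixed and the symmetric difference $\Omega_t \Delta \Omega_0$ is compactly contained in $\mathring M$, one may differentiate under the integral sign; the standard coarea/flow argument (or: the derivative of the signed volume of $\Omega_t$ with weight $hu$) yields
\[
\td{}{t}\bigg|_{t=0} \int_M (\chi_{\Omega_t}-\chi_{\Omega_0}) hu\,d\cH^n = \int_\Sigma hu\,\psi\,d\cH^{n-1}.
\]
Subtracting this from the weighted perimeter variation gives precisely the integrand $(Hu + \bangle{\nabla_M u,\nu} - hu)\psi$, proving the first variation formula.

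For the Euler--Lagrange equation, since $\Omega$ minimizes $\cA$ among admissible Caccioppoli sets and $\partial\Omega$ is smooth (from Proposition~\ref{prop.existence.regularity}), the variation vanishes for every compactly supported smooth $\psi$ on $\partial\Omega$. By the fundamental lemma of the calculus of variations, $Hu + \bangle{\nabla_M u,\nu} - hu \equiv 0$ on $\partial\Omega$, and dividing by $u>0$ yields $H = -u^{-1}\bangle{\nabla_M u,\nu} + h$. There is no serious obstacle here; the only point that requires any care is matching sign conventions for $H$ and $\nu$ so that the weighted mean curvature relation comes out with the stated sign, which I would fix by declaring $\nu$ to be the outward unit normal from the outset.
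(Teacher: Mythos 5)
Your proposal is correct. Note that the paper states Lemma~\ref{lemm:1st-var} without any proof, evidently treating it as a standard first-variation computation; your derivation---splitting $\cA$ into the weighted perimeter term and the weighted bulk term, applying the first variation of weighted area and of volume with weight $hu$, and then invoking the fundamental lemma of the calculus of variations---is exactly the expected argument and matches the sign conventions used in the paper.
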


\begin{lemm}\label{lemm:2nd-var}
Consider a $\mu$-bubble $\Omega$ with $\partial\Omega = \Sigma$. Assume that $\Omega_t$ is a smooth $1$-parameter family of regions with $\Omega_0 = \Omega$ and normal speed $\psi$ at $t=0$, then $\cQ(\psi):=\frac{d^2}{dt^2}\big|_{t=0}(\cA(\Omega_t))\ge 0$ where $\cQ(\psi)$ satisfies 
\begin{align*}
& \cQ(\psi) \\
&\leq \int_\Sigma \left(|\nabla_\Sigma \psi|^2 u -\tfrac12 (R_M - 1 -R_\Sigma +|\mathring{A_\Sigma}|^2)\psi^2 u+ (\Delta_M u  -\Delta_\Sigma u)\psi^2     \right.\\
&\left. \qquad\qquad -\tfrac 12 u^{-1} \bangle{\nabla_M u,\nu}^2\psi^2  - \tfrac 12 (1 + h^2 + 2 \bangle{\nabla_M h,\nu})\psi^2 u 
\vphantom{\mathring{A_\Sigma}}\right)d\cH^{n-1}.
\end{align*}
\end{lemm}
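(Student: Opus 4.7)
The plan is to compute $\frac{d^2}{dt^2}\cA(\Omega_t)$ directly, using the first variation expression from Lemma~\ref{lemm:1st-var} as the starting point, and then massage the resulting integrand into the stated form using the Gauss equation.

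\textbf{Step 1: Second variation in raw form.}  Since $\Omega$ is a $\mu$-bubble, the integrand $Hu+\langle\nabla_M u,\nu\rangle-hu$ in Lemma~\ref{lemm:1st-var} vanishes on $\Sigma$. Differentiating the first variation once more at $t=0$ and using that this integrand vanishes pointwise on $\Sigma_0$, boundary terms from differentiating the measure drop out and one is left with
\[
\cQ(\psi)=\int_\Sigma \psi\,\frac{d}{dt}\Big|_{t=0}\!\bigl(Hu+\langle\nabla_M u,\nu\rangle-hu\bigr)\,d\cH^{n-1}.
\]
I would then plug in the standard formulas $\dot H=-\Delta_\Sigma\psi-(|A|^2+\Ric_M(\nu,\nu))\psi$, $\dot u=\psi\langle\nabla_M u,\nu\rangle$, $\dot h=\psi\langle\nabla_M h,\nu\rangle$, and $\dot{\langle\nabla_M u,\nu\rangle}=\psi\Hess_M u(\nu,\nu)-\langle\nabla_\Sigma u,\nabla_\Sigma\psi\rangle$ (the last using $\dot\nu=-\nabla_\Sigma\psi$).

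\textbf{Step 2: Integration by parts.}  Writing $\psi\,u\dot H$ and integrating by parts gives $\int_\Sigma u|\nabla_\Sigma\psi|^2+\psi\langle\nabla_\Sigma u,\nabla_\Sigma\psi\rangle$, and the latter cancels exactly against the $-\psi\langle\nabla_\Sigma u,\nabla_\Sigma\psi\rangle$ coming from $\dot{\langle\nabla_M u,\nu\rangle}$. Also, at the critical point $H=h-u^{-1}\langle\nabla_M u,\nu\rangle$, which lets me replace the mixed cross term $(H-h)\psi^2\langle\nabla_M u,\nu\rangle$ by $-u^{-1}\langle\nabla_M u,\nu\rangle^2\psi^2$. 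This produces an intermediate identity
\[
\cQ(\psi)=\int_\Sigma u|\nabla_\Sigma\psi|^2-(|A|^2+\Ric_M(\nu,\nu))u\psi^2-u^{-1}\langle\nabla_M u,\nu\rangle^2\psi^2+\Hess_M u(\nu,\nu)\psi^2-\langle\nabla_M h,\nu\rangle u\psi^2.
\]

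\textbf{Step 3: Gauss equation and completing the square.}  Apply the Gauss equation $R_\Sigma=R_M-2\Ric_M(\nu,\nu)+H^2-|A|^2$ to substitute $\Ric_M(\nu,\nu)=\tfrac12(R_M-R_\Sigma+H^2-|A|^2)$, and use $|A|^2=|\mathring A_\Sigma|^2+\tfrac{H^2}{n-1}$, so $|A|^2+H^2\geq|\mathring A_\Sigma|^2+H^2$. Replace $\Hess_M u(\nu,\nu)$ using $\Delta_M u=\Delta_\Sigma u+\Hess_M u(\nu,\nu)+H\langle\nabla_M u,\nu\rangle$. Finally, use the critical-point relation to write $Hu=hu-\langle\nabla_M u,\nu\rangle$ and expand $H^2 u=h^2 u-2h\langle\nabla_M u,\nu\rangle+u^{-1}\langle\nabla_M u,\nu\rangle^2$; combining with the cross term $H\langle\nabla_M u,\nu\rangle$ from the previous substitution, the $h\langle\nabla_M u,\nu\rangle$ terms match and the leftover $-\tfrac12 H^2 u$ piece gets absorbed against the slack in the $|A|^2+H^2\geq|\mathring A_\Sigma|^2$ bound. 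This produces the factors $-\tfrac12 u^{-1}\langle\nabla_M u,\nu\rangle^2$ and $-\tfrac12 h^2 u$ as well as the $-\langle\nabla_M h,\nu\rangle u$ term in the desired inequality.

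\textbf{Step 4: Stability.}  The nonnegativity $\cQ(\psi)\geq 0$ is then simply the fact that $\Omega$ minimizes $\cA$ in the admissible class (Proposition~\ref{prop.existence.regularity}), so any compactly supported normal perturbation increases $\cA$ to second order. The "$-1$" terms $\tfrac12 u\psi^2$ arising from $R_M-1$ on one side and the $-\tfrac12 u\psi^2$ from the "$1$" inside $-\tfrac12(1+h^2+2\langle\nabla_M h,\nu\rangle)$ on the other are added in artificially on both sides (they cancel identically); the author includes them because this is the natural form that appears when the Gauss equation is applied to the warped product $M\times S^1$ with fiber contributing an extra $-2u^{-1}\Delta_M u$ to the scalar curvature.

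\textbf{Main obstacle.}  The bulk of the work is purely algebraic bookkeeping — tracking how the critical-point equation, Gauss equation, the identity for $\Delta_M u-\Delta_\Sigma u$, and the decomposition $|A|^2=|\mathring A_\Sigma|^2+H^2/(n-1)$ combine to produce precisely the stated coefficients, including the fact that all cross terms $h\langle\nabla_M u,\nu\rangle$ cancel. The subtlety is recognizing that one only obtains an inequality (not equality) because of the step $|A|^2+H^2\geq|\mathring A_\Sigma|^2$, and that the slack $\tfrac{nH^2}{n-1}u\psi^2-H^2 u\psi^2=\tfrac{1}{n-1}H^2 u\psi^2$ is exactly nonnegative, so discarding it is valid.
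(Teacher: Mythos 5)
Your proposal is correct and follows essentially the same route as the paper: differentiate the first variation using the vanishing of the Euler--Lagrange integrand, apply the traced Gauss equation and the identity $\Delta_M u=\Delta_\Sigma u+\Hess_M u(\nu,\nu)+H\langle\nabla_M u,\nu\rangle$, substitute the critical-point relation $H=h-u^{-1}\langle\nabla_M u,\nu\rangle$ into $\tfrac12 H^2$, and discard the nonnegative slack $\tfrac{1}{2(n-1)}H^2u\psi^2$ coming from $|A_\Sigma|^2=|\mathring{A_\Sigma}|^2+\tfrac{H^2}{n-1}$. The bookkeeping in your "main obstacle" paragraph is exactly right, and your observation that the two "$\pm\tfrac12 u\psi^2$" terms are inserted artificially and cancel is also correct.
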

\begin{proof}
Differentiating the first variation, we find
\begin{align*}
    & \cQ(\psi) \\
    &= \int_\Sigma \left(-\psi u \Delta_\Sigma \psi -  \tfrac{1}{2} (R_M - R_\Sigma + |A_\Sigma|^2 + H_\Sigma^2)\psi^2 u  + H_\Sigma \langle \nabla_M u,\nu\rangle \psi^2  \right.\\
        &\left. \qquad\qquad +D^2u(\nu,\nu) \psi^2 - \langle\nabla_\Sigma u,\nabla_\Sigma \psi \rangle\psi- \langle \nabla_M(hu), \nu \rangle \psi^2 \vphantom{\tfrac12}\right) d\cH^{n-1}\\
      &=\int_\Sigma \left(|\nabla_\Sigma \psi|^2 u -\tfrac12 (R_M-R_\Sigma +|\mathring{A_\Sigma}|^2)\psi^2 u -\tfrac34 H_\Sigma^2 \psi^2 u \right.\\
        &\left. \qquad\qquad + (\Delta_M u  -\Delta_\Sigma u)\psi^2 - \bangle{\nabla_M h,\nu}u\psi^2 - h\bangle{\nabla_M u, \nu}\psi^2 \vphantom{\mathring{A_\Sigma}}\right)d\cH^{n-1}.
\end{align*}
Now use 
\[
\tfrac 12 H_\Sigma^2 \psi^2 u = \tfrac 12 u^{-1}\bangle{\nabla_M u,\nu}^2 \psi^2  -h \bangle{\nabla_M u,\nu} \psi^2  + \tfrac 12 h^2 \psi^2 u
\]
to write
\begin{align*}
& \cQ(\psi) \\
&\leq \int_\Sigma \left(|\nabla_\Sigma \psi|^2 u -\tfrac12 (R_M - 1 -R_\Sigma +|\mathring{A_\Sigma}|^2)\psi^2 u+ (\Delta_M u  -\Delta_\Sigma u)\psi^2     \right.\\
&\left. \qquad\qquad -\tfrac 12 u^{-1} \bangle{\nabla_M u,\nu}^2\psi^2  - \tfrac 12 (1 + h^2 + 2 \bangle{\nabla_M h,\nu})\psi^2 u 
\vphantom{\mathring{A_\Sigma}}\right)d\cH^{n-1}.
\end{align*}
This completes the proof. 
\end{proof}

\section{Free boundary warped $\mu$-bubbles}\label{sec:fb-mu-bub}

We will need a generalization of the previous discussion to the free boundary case. For $n\leq 7$, suppose that $(M^n,g)$ is a Riemannian $n$-manifold with co-dimension $2$ corners in the sense that any point has a neighborhood diffeomorphic to one of the following: $\RR^n$, $\{ x \in \RR^n : x^n\geq 0\}$ or $\{x \in \RR^n : x^{n-1},x^n \geq 0\}$. 

Assume that $M\setminus\mathring M = \partial_+M \cup \partial_- M \cup \partial_0 M$ where $\partial_\pm M,\partial_0 M$ are all smooth submanifolds of $M$ (possibly with boundary). We assume that $\partial_+ M \cap \partial_- M = \emptyset$ and $\partial_\pm M \cap\partial_0M$ consists of smooth co-dimension $2$ closed submanifolds. Moreover, we assume that $\partial_\pm M$ meets $\partial_0M$ orthogonally. 

For an open set $\Omega \subset M$ we denote by $\partial \Omega$ the \emph{topological boundary} in the sense that $\partial\Omega = \overline{\Omega} \cap \overline{\Omega^c}$. Observe that with this definition, if $\Omega =M$, $\partial\Omega = \emptyset$. 

Fix a function $u \in C^\infty(M)$ with $u>0$ and $h \in C^\infty(M\setminus (\partial_+M\cup\partial_-M))$. Assume that $h\to \pm \infty$ on $\partial_\pm M$. Consider a Caccioppoli set $\Omega_0$ with smooth boundary satisfying $\partial\Omega_0 \subset \mathring M \cup \partial_0 M$. 

Consider the functional (as before)
\[
\cA(\Omega) = \int_{\partial^* \Omega} u \, d\cH^{n-1} - \int_M (\chi_\Omega - \chi_{\Omega_0}) hu \, d\cH^n
\]
for all Caccioppoli sets $\Omega$ in $M$ with $\Omega\Delta \Omega_0 \Subset M\setminus (\partial_+M\cup\partial_-M)$. By similar arguments to the previous section, we can conclude the following result:
\begin{prop}\label{prop:fb-mu-bub}
There exists $\Omega$ with $\partial\Omega \subset \mathring M \cup \partial_0 M$ minimizing $\cA$ among such regions. The boundary $\partial\Omega$ is smooth and meets $\partial_0M$ orthogonally. We have
\[
H = - u^{-1}\bangle{\nabla_M u,\nu_{\partial\Omega}} + h
\]
along $\partial\Omega$. Finally, if $\Sigma$ is a component of $\partial\Omega$, then for any $\psi \in C^1(\Sigma)$, we have 
\begin{align*}
0 &\leq \int_\Sigma \left(|\nabla_\Sigma \psi|^2 u -\tfrac12 (R_M - 1 -R_\Sigma +|\mathring{A_\Sigma}|^2)\psi^2 u+ (\Delta_M u  -\Delta_\Sigma u)\psi^2     \right.\\
&\left. \qquad\qquad -\tfrac 12 u^{-1} \bangle{\nabla_M u,\nu_\Sigma}^2\psi^2  - \tfrac 12 (1 + h^2 + 2 \bangle{\nabla_M h,\nu_\Sigma})\psi^2 u 
\vphantom{\mathring{A_\Sigma}}\right)d\cH^{n-1}\\
& \qquad - \int_{\partial\Sigma} A_{\partial_0M} (\nu_\Sigma,\nu_\Sigma) \psi^2 u \, d\cH^{n-2} .
\end{align*}
\end{prop}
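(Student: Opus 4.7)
The proof will parallel Proposition \ref{prop.existence.regularity} and Lemmas \ref{lemm:1st-var}--\ref{lemm:2nd-var}, tracking the additional contributions produced by the free-boundary portion on $\partial_0 M$. For existence, I would reproduce the barrier argument of Proposition \ref{prop.existence.regularity} at $\partial_\pm M$: the equidistant foliations $\{S_\pm^\rho\}$ used there extend across the corner $\partial_\pm M \cap \partial_0 M$ thanks to the orthogonality hypothesis, and the divergence identity goes through verbatim, since the portion of $\partial(\Omega_+^\tau \setminus \Omega)$ lying on $\partial_0 M$ contributes nothing (its conormal is tangent to $\partial_0 M$ and hence perpendicular to the foliation vector field). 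Confining minimizing sequences to Caccioppoli sets uniformly away from $\partial_\pm M$ and applying BV-compactness then yields a minimizer $\Omega$.

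Interior regularity of $\partial \Omega$ follows from Tamanini's theory for almost-minimizers with Lipschitz prescribed mean curvature (using $n \leq 7$). At contact points with $\partial_0 M$, I would invoke the free-boundary regularity theory of Gr\"uter--Jost, adapted to the prescribed-mean-curvature setting. The hypothesis \eqref{eq:warped-H-0-fb-mu-bub} states exactly that $\partial_0 M \times S^1$ is a minimal hypersurface of the warped product $(M \times S^1, g + u^2 dt^2)$; applying the strong maximum principle in that formulation rules out tangential contact of $\partial \Omega$ with $\partial_0 M$, yielding smoothness of the free boundary. Taking variations whose generating vector field is tangent to $\partial_0 M$ along $\partial_0 M$ and otherwise arbitrary, the first variation from Lemma \ref{lemm:1st-var} then gives both the mean curvature equation $H = -u^{-1}\langle \nabla_M u,\nu\rangle + h$ in the interior of $\partial \Omega$ and the orthogonality condition $\nu_\Sigma \perp \nu_{\partial_0 M}$ at $\partial \Sigma$.

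For the second variation, the bulk calculation of Lemma \ref{lemm:2nd-var} carries over unchanged, so the only new ingredients are the boundary contributions from the two integrations by parts used there. Writing $\eta$ for the outward conormal of $\Sigma$ at $\partial\Sigma$, the boundary terms coming from $-\int_\Sigma \psi u\, \Delta_\Sigma \psi$ and $-\int_\Sigma \langle \nabla_\Sigma u, \nabla_\Sigma \psi\rangle \psi$ are $\tfrac12 \int_{\partial\Sigma} \psi^2\, \partial_\eta u - \int_{\partial\Sigma} u\psi\, \partial_\eta \psi$ and $-\tfrac12 \int_{\partial\Sigma} \psi^2\, \partial_\eta u$ respectively, so the $\partial_\eta u$ contributions cancel algebraically. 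Any admissible variation preserving $\partial\Sigma \subset \partial_0 M$ to first order satisfies the standard free-boundary Jacobi identity $\partial_\eta \psi = A_{\partial_0 M}(\nu_\Sigma,\nu_\Sigma)\, \psi$ on $\partial\Sigma$; substituting produces exactly the asserted term $-\int_{\partial\Sigma} A_{\partial_0 M}(\nu_\Sigma,\nu_\Sigma)\, \psi^2 u\, d\cH^{n-2}$ in the stability inequality.

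I expect the principal obstacle to be the free-boundary regularity at $\partial_0 M$---in particular, excluding tangential contact, which is precisely what the warped minimality assumption \eqref{eq:warped-H-0-fb-mu-bub} is designed to achieve via its warped-product reformulation. Once regularity and orthogonality are in place, the first- and second-variation formulas reduce to the calculations already carried out in Section \ref{sec:warped-mu-bub}, combined with the standard free-boundary Jacobi-field identity.
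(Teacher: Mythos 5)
Most of your outline matches what the paper intends (it only asserts Proposition \ref{prop:fb-mu-bub} ``by similar arguments to the previous section''): the barrier argument at $\partial_\pm M$ as in Proposition \ref{prop.existence.regularity}, BV compactness, interior regularity for $n\le 7$, free-boundary regularity together with the use of \eqref{eq:warped-H-0-fb-mu-bub} and the maximum principle to exclude tangential contact, and the first variation giving the prescribed-mean-curvature equation and orthogonality. The genuine gap is in your derivation of the boundary term of the stability inequality. The identity $\partial_\eta \psi = A_{\partial_0 M}(\nu_\Sigma,\nu_\Sigma)\,\psi$ is the natural Robin-type condition characterizing Jacobi fields (equivalently, eigenfunctions of the free-boundary stability operator); it is \emph{not} satisfied by the normal speed of a general admissible variation. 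For example, take $M$ a Euclidean half-space with $\partial_0 M$ the flat boundary hyperplane (so $A_{\partial_0 M}\equiv 0$) and $\Sigma$ a flat half-disk meeting it orthogonally; then $X=f\,\nu_\Sigma$ with $f$ arbitrary is tangent to $\partial_0 M$ along $\partial_0 M$, yet $\partial_\eta(f|_\Sigma)$ need not vanish on $\partial\Sigma$. Since the proposition asserts the inequality for \emph{every} $\psi\in C^1(\Sigma)$, you cannot restrict to speeds obeying that boundary condition; and with your accounting the net boundary contribution for a general $\psi$ is $-\int_{\partial\Sigma} u\,\psi\,\partial_\eta\psi$, which is not the asserted curvature term.

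The term $-\int_{\partial\Sigma} A_{\partial_0M}(\nu_\Sigma,\nu_\Sigma)\psi^2 u$ actually comes from a contribution your computation drops. For $t\neq 0$ the conormal $\eta_t$ of $\Sigma_t$ is no longer orthogonal to $\partial_0 M$, so the first variation of $\cA(\Omega_t)$ contains, besides $\int_{\Sigma_t}(Hu+\bangle{\nabla_M u,\nu}-hu)\psi_t\,d\cH^{n-1}$ as in Lemma \ref{lemm:1st-var}, a boundary flux $\int_{\partial\Sigma_t} u\,\bangle{X,\eta_t}\,d\cH^{n-2}$, which vanishes at $t=0$ by orthogonality but whose $t$-derivative does not. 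Differentiating this flux (as in the classical free-boundary second variation computations of Ros--Vergasta and Fraser--Li, here weighted by $u$, using that $X$ is tangent to $\partial_0 M$ along $\partial_0M$) produces boundary contributions whose $\psi\,\partial_\eta\psi$ part cancels the $-\int_{\partial\Sigma}u\,\psi\,\partial_\eta\psi$ from your integration by parts, and whose remainder is exactly $-\int_{\partial\Sigma} A_{\partial_0M}(\nu_\Sigma,\nu_\Sigma)\psi^2 u\,d\cH^{n-2}$. So the final inequality you wrote is the correct one, but substituting a Robin condition that general admissible variations do not satisfy is not a proof of it; the curvature term must be obtained from the second-order motion of the boundary constrained to lie on $\partial_0 M$, after which the argument proceeds as in Lemma \ref{lemm:2nd-var}.
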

\begin{proof}
Existence and interior regularity follow in a similar manner to Proposition \ref{prop.existence.regularity} with the following modification. To show that a minimizer $\Omega$ exists such that $\Omega\Delta \Omega_0$ is disjoint from $\partial_\pm M$, for $\tau>0$ sufficiently small, we find a foliation $\{S^\rho_\pm\}_{\rho\in [0,\tau]}$ of a small neighborhood $\Omega_\pm^\tau$ of $\partial_\pm M$ such that each $S^\rho_\pm$ meets $\partial_0 M$ orthogonally. Denote by $\eta$ the unit normal vector field of $\{S_\pm^\rho\}$ pointing into $M$ along $\partial_\pm M$. Choose $\tau>0$ sufficiently small so that
\[hu>\Div (u\eta) \text{ in }\Omega_+^\tau, \quad hu < -\Div (u\eta) \text{ in }\Omega_{-}^\tau.\]
Then a similar computation as in the proof of Proposition \ref{prop.existence.regularity} (the boundary term from the divergence theorem vanishes since each $S_\pm^\rho$ meet $\partial_0 M$ orthogonally) shows that we always have $\cA(\Omega\cap \Omega_+^\tau)<\cA(\Omega)$ and $\cA(\Omega\setminus \Omega_{-}^\tau)<\cA(\Omega)$. In particular, this shows that the minimizer $\Omega$ is disjoint from the co-dimension $2$ corners. Thus, regularity at the free boundary follows by modifying the regularity results for free boundary isoperimetric surfaces in \cite{Gruter:partion} (see also \cite{Gruter:optimal}) so as to apply to the $\mu$-bubble functional. The second variation follows as before, but with the additional boundary term arising exactly as in the calculation of the second variation of free boundary minimal surfaces. 
\end{proof}

\section{Diameter bounds for certain surfaces} \label{sec:diam-bds}
In this section we derive diameter bounds for certain surfaces (in practice these will be stable $\mu$-bubbles in certain $3$-manifolds). Schoen--Yau \cite{SY:condensation} have proven that stable minimal surfaces in $3$-manifolds of positive scalar curvature satisfy such an inequality. Moreover, they have proven that the $1$-dimensional components of their minimal $k$-slicings satisfy a length bound \cite{SY:sing-PMT}. Since we will apply this slicing idea for $\mu$-bubbles (possibly with boundary), we find it convenient to state and prove the following results, analogous to the Schoen-Yau estimates \cite{SY:sing-PMT}. 

We use $\mu$-bubbles to give a slightly different proof of these diameter bounds as to compared to those in \cite{SY:condensation,SY:sing-PMT}. 

\begin{lemm}\label{lemm:closed-2-dim-diam-bds}
For closed $2$-dimensional Riemannian manifold $(\Sigma^2,g)$, suppose there is a smooth function $\lambda > 0$ so that 
\[
\Delta_\Sigma \lambda  \leq - (K_0-K_\Sigma) \lambda + \tfrac 12 \lambda^{-1} |\nabla_\Sigma \lambda|^2 
\]
for some $K_0 \in (0,\infty)$. Then $\diam_g \Sigma \leq \sqrt{\tfrac {2}{K_0}} \pi$. 
\end{lemm}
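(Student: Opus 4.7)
\textbf{Step 1: Reduction to a clean inequality.} The natural move is to substitute $\lambda = \phi^2$ with $\phi > 0$. A direct computation gives $\Delta_\Sigma\lambda = 2\phi\Delta_\Sigma\phi + 2|\nabla\phi|^2$ and $\tfrac{1}{2}\lambda^{-1}|\nabla\lambda|^2 = 2|\nabla\phi|^2$; the choice of coefficient $\tfrac12$ in the hypothesis has been arranged precisely so that the $|\nabla\phi|^2$ terms cancel. Thus the hypothesis becomes the much cleaner pointwise inequality
\[
-\Delta_\Sigma\phi + \tfrac12 K_\Sigma\phi \ge \tfrac{K_0}{2}\phi, \quad \phi > 0 \text{ on } \Sigma. \tag{$\star$}
\]

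\textbf{Step 2: Spectral inequality.} Multiplying $(\star)$ by $\xi^2/\phi$ for an arbitrary test function $\xi$, integrating by parts, and absorbing the cross term $2\xi\nabla\xi\cdot\nabla\phi/\phi$ via Cauchy--Schwarz yields the Poincar\'e-type bound
\[
\int_\Sigma |\nabla\xi|^2 + \tfrac12 K_\Sigma \xi^2\, d\cH^2 \ge \tfrac{K_0}{2}\int_\Sigma \xi^2\, d\cH^2 \quad \forall \xi \in H^1(\Sigma).
\]
Morally, $\mu_1(-\Delta_\Sigma + \tfrac12 K_\Sigma) \ge \tfrac{K_0}{2}$ on $\Sigma$.

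\textbf{Step 3: Diameter bound via a $\mu$-bubble.} Suppose for contradiction that $\diam_g \Sigma > \pi\sqrt{2/K_0}$, and pick $p_0, p_1 \in \Sigma$ at distance strictly greater than this threshold. I apply the warped $\mu$-bubble construction of Proposition \ref{prop.existence.regularity} on the $2$-dimensional ambient $M_0 := \Sigma\setminus\bigl(B_\delta(p_0)\cup B_\delta(p_1)\bigr)$, taking warping factor $u := \phi$ and a prescribing function $h$ modelled on the geodesic curvature of distance circles in the round sphere of constant Gauss curvature $K_0/2$: after smoothing the distance function $\rho := d(\cdot,p_0)$ to avoid the cut locus, set
\[
h(x) := -\sqrt{2K_0}\cot\bigl(\sqrt{K_0/2}\,\tilde\rho(x)\bigr),
\]
so that $h\to\mp\infty$ at the two boundary circles. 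Proposition \ref{prop.existence.regularity} produces a smooth stable $\mu$-bubble $\Sigma' \subset M_0$, a finite union of simple closed curves that separates $p_0$ from $p_1$.

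Now feed $\psi \equiv 1$ into the stability inequality (the $n=2$ specialisation of Lemma \ref{lemm:2nd-var}, in which $R_{\Sigma'} = 0$ and $|\mathring A_{\Sigma'}|^2 = 0$ since $\Sigma'$ is $1$-dimensional). Substituting $R_M = 2K_\Sigma$, using $(\star)$ to handle the $\Delta_M \phi$ term, and invoking the ODE identity satisfied by our choice of $h$ (the identity $1 + h^2 + 2h' = 2K_0\csc^2(\sqrt{K_0/2}\,\tilde\rho)$, inherited from the model sphere of Gauss curvature $K_0/2$), one finds the remaining boundary integral is a sum of manifestly nonpositive terms, forcing the contradiction.

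\textbf{Main obstacle.} The essential difficulty is choosing $h$ so that, after invoking $(\star)$, the stability integrand collapses to a nonpositive expression; this is really a question of matching the Euler--Lagrange equation of the $\mu$-bubble functional with the comparison geometry (the round sphere of curvature $K_0/2$, which has diameter exactly $\pi\sqrt{2/K_0}$ and saturates the claimed bound). The auxiliary issue of smoothing $\rho$ near the cut locus is handled by a routine mollification argument and does not affect the limiting inequality. (An alternative---less in the spirit of the paper---would be to reduce Step 3 to a $1$D ODE comparison along a minimizing geodesic $\gamma$, combining $(\star)$ with the Jacobi-stability of $\gamma$; but the $\mu$-bubble proof keeps the machinery uniform with the rest of the paper.)
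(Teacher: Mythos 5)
Your Steps 1--2 are fine (the substitution $\lambda=\phi^2$ does linearize the hypothesis to $\Delta_\Sigma\phi\le \tfrac12(K_\Sigma-K_0)\phi$), but the crux of Step 3 -- the claim that with warping factor $u=\phi$ and test function $\psi\equiv 1$ the stability integrand ``collapses to a nonpositive expression'' -- does not hold, and this is a genuine gap. In the second variation of the weighted functional the ambient curvature enters through $\tfrac12 R_M=K_\Sigma$ with coefficient one (Gauss equation), whereas your supersolution inequality $(\star)$, used to absorb the $\Delta_\Sigma\phi$ term, only supplies $\tfrac12 K_\Sigma$. Concretely, with $u=\phi$, $\psi\equiv1$, after plugging in the first-variation identity $k_\gamma=-\phi^{-1}\bangle{\nabla_\Sigma\phi,\nu}+h$, discarding $\int_\gamma\Delta_\gamma\phi=0$ and using $(\star)$, one is left with
\[
0\le \int_\gamma\Big(-\tfrac12 K_\Sigma\,\phi-\tfrac12 K_0\,\phi-\phi^{-1}\bangle{\nabla_\Sigma\phi,\nu}^2+h\bangle{\nabla_\Sigma\phi,\nu}-h^2\phi-\bangle{\nabla_\Sigma h,\nu}\phi\Big)\,d\cH^1,
\]
and the term $-\tfrac12 K_\Sigma\phi$ has no sign: the hypothesis gives only spectral, not pointwise, control of $K_\Sigma$, and the Gauss curvature along the $\mu$-bubble can be very negative. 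No choice of power $\psi=\phi^\alpha$ fixes this as long as the weight is $\phi$, because the hypothesis is invoked only once through $\Delta_\Sigma\phi$. The paper avoids the mismatch by weighting with $\lambda=\phi^2$ itself (so that $\Delta_\Sigma\lambda$ carries the full $-(K_0-K_\Sigma)\lambda$, at the cost of the extra $\tfrac12\lambda^{-1}|\nabla_\Sigma\lambda|^2$) and then testing stability with $\psi=\lambda^{-1/2}$; after integrating $\lambda^{-1}\Delta_\gamma\lambda$ by parts along the curve and splitting $|\nabla_\Sigma\lambda|^2=|\nabla_\gamma\lambda|^2+\bangle{\nabla_\Sigma\lambda,\nu}^2$, every gradient term is absorbed and one lands exactly on $\int_\gamma\big(K_0+\tfrac12 h^2+\bangle{\nabla_\Sigma h,\nu}\big)\,d\cH^1\le 0$, which contradicts the pointwise positivity built into $h$.

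Two secondary issues in your setup of the prescribing function: first, with $h=-\sqrt{2K_0}\cot(\sqrt{K_0/2}\,\tilde\rho)$ and $|\nabla\tilde\rho|\le1$ the relevant quantity satisfies only $K_0+\tfrac12 h^2-|\nabla h|\ge K_0\big(1+\cot^2-\csc^2\big)=0$, i.e.\ borderline equality rather than the strict positivity the contradiction requires; you must exploit the strict excess $d(p_0,p_1)>\pi\sqrt{2/K_0}$ to build in slack (the paper compresses the distance function to a band of width exactly $\pi\sqrt{2/K_0}$ with Lipschitz constant $1-\ep$ and uses a tangent profile). Second, as defined your $h$ does not tend to $\pm\infty$ on the two boundary circles of $\Sigma\setminus(B_\delta(p_0)\cup B_\delta(p_1))$: the pole of the cotangent at $\tilde\rho=\pi\sqrt{2/K_0}$ lies in the interior of that region (since $d(p_0,p_1)$ exceeds it), so the domain on which Proposition \ref{prop.existence.regularity} is applied must be the band between the two poles of $h$, not the complement of the two balls. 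Both of these are repairable, but the weight/test-function mismatch in the first paragraph is where the proposed proof actually fails.
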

\begin{proof}

For contradiction, suppose there are $p,q \in \Sigma$ with 
\[
L : = d(p,q) >  L_0 : = \sqrt{\tfrac {2}{K_0}} \pi.
\]
There is $\ep>0$ and a smooth function 
\[
\rho : \Sigma \setminus (B_\ep(p)\cup B_\ep(q)) \to (-L_0/2,L_0/2)
\]
with $|\Lip \rho| \leq 1-\ep$ and $\rho \to  -L_0/2$ at $\partial B_\ep(p)$ and $\rho \to L_0/2$ at $\partial B_\ep(q)$. We then take 
\[
h(x) = - 2(1-\ep) \tfrac{\pi}{L_0}  \tan(\tfrac{\pi}{L_0} \rho(x)).
\]
Observe that 
\begin{align*}
& K_0 + \tfrac 12 h(x)^2 - |\nabla_\Sigma h|(x) \\
& \geq K_0 + 2(1-\ep)^2 \tfrac{\pi^2}{L_0^2} ( \tan^2(\tfrac{\pi}{L_0}\rho(x)) - \sec^2(\tfrac{\pi}{L_0}\rho(x)))\\
& = K_0 - 2(1-\ep)^2 \tfrac{\pi^2}{L_0^2} \\
& = K_0 - (1-\ep)^2 K_0 > 0. 
\end{align*} 
We will use this below.

Using Proposition \ref{prop.existence.regularity} we can find a $\mu$-bubble $\Omega$ minimizing
\[
\cA(\Omega) = \int_{\partial^*\Omega} \lambda \, d\cH^1 - \int_{\Omega}(\chi_\Omega-\chi_{\Omega_0}) h \lambda\, d\cH^2
\]
where $\Omega_0$ is some reference Caccioppoli set $B_\ep(p) \subset \Omega_0 \subset \Sigma \setminus B_\ep(q)$. By Lemma \ref{lemm:1st-var}, $k_\gamma = - \lambda^{-1}\bangle{\nabla_\Sigma \lambda,\nu_\gamma} + h$. As in Lemma \ref{lemm:2nd-var}, we compute
\begin{align*}
0  & \leq \int_\gamma \left( |\nabla_\gamma \psi|^2 \lambda - K_\Sigma  \psi^2 \lambda - k_\gamma^2 \psi^2 \lambda + (\Delta_\Sigma \lambda - \Delta_\gamma \lambda) \psi^2 \right.\\
        &\left. \qquad\qquad   - \bangle{\nabla_\Sigma \lambda,\nu_\gamma} \psi^2 h - \bangle{\nabla_\Sigma h ,\nu_\gamma} \psi^2 \lambda  \vphantom{\tfrac12}\right) d\cH^{1}.
\end{align*}
Take $\psi = \lambda^{-\frac 12}$ to find
\begin{align*}
0  & \leq \int_\gamma \left(\tfrac 1 4  \lambda^{-2} |\nabla_\gamma \lambda|^2  - K_\Sigma   - k_\gamma^2  +\lambda^{-1} (\Delta_\Sigma \lambda - \Delta_\gamma \lambda)  \right.\\
        &\left. \qquad\qquad  - \lambda^{-1} \bangle{\nabla_\Sigma \lambda,\nu_\gamma} h - \bangle{\nabla_\Sigma h ,\nu_\gamma}  \vphantom{\tfrac12}\right) d\cH^{1}.
\end{align*}
Using $\tfrac 12 k_\gamma^2 = \tfrac 12 \lambda^{-2} \bangle{\nabla_\Sigma \lambda,\nu_\gamma}^2 - \lambda^{-1} \bangle{\nabla_\Sigma\lambda,\nu_\gamma}h + \tfrac 12 h^2$, we have
\begin{align*}
0  & \leq \int_\gamma \left(\tfrac 1 4  \lambda^{-2} |\nabla_\gamma \lambda|^2  - K_\Sigma  +\lambda^{-1} (\Delta_\Sigma \lambda - \Delta_\gamma \lambda)  \right.\\
        &\left. \qquad\qquad -\tfrac 12 \lambda^{-2} \bangle{\nabla_\Sigma \lambda,\nu_\gamma}^2 - \tfrac 12 h^2 - \bangle{\nabla_\Sigma h ,\nu_\gamma}  \vphantom{\tfrac12}\right) d\cH^{1}\\
& = \int_\gamma \left( - \tfrac 3 4  \lambda^{-2} |\nabla_\gamma \lambda|^2  - K_\Sigma  +\lambda^{-1} (\Delta_\Sigma \lambda)  \right.\\
        &\left. \qquad\qquad -\tfrac 12 \lambda^{-2} \bangle{\nabla_\Sigma \lambda,\nu_\gamma}^2 - \tfrac 12 h^2 - \bangle{\nabla_\Sigma h ,\nu_\gamma}  \vphantom{\tfrac12}\right) d\cH^{1},
\end{align*}
where we integrated by parts in the second step. Now, using the equation satisfied by $\lambda$, we have
\begin{align*}
0  & \leq \int_\gamma \left( -\tfrac 3 4  \lambda^{-2} |\nabla_\gamma \lambda|^2  + \tfrac 12 \lambda^{-2} |\nabla_\Sigma\lambda|^2 -  \tfrac 12 \lambda^{-2} \bangle{\nabla_\Sigma \lambda,\nu_\gamma}^2    \right.\\
        &\left. \qquad\qquad- K_0 - \tfrac 12 h^2  - \bangle{\nabla_\Sigma h ,\nu_\gamma}  \vphantom{\tfrac12}\right) d\cH^{1}.
\end{align*}
Note that $|\nabla_\Sigma\lambda|^2 = |\nabla_\gamma\lambda|^2 + \bangle{\nabla_\Sigma \lambda,\nu_\gamma}^2$, so we find
\[
\int_\gamma (K_0 + \tfrac 12 h^2 + \bangle{\nabla_\Sigma h,\nu_\gamma}) d\cH^1 \leq 0
\]
However, we have seen above that $K_0 + \tfrac 12 h^2 + \bangle{\nabla_\Sigma h,\nu_\gamma}>0$. This is a contradiction, completing the proof. 
\end{proof}

The following is the free-boundary analogue of the previous result. We note that Carlotto--Franz \cite[Proposition 1.8]{CarlottoFranz} have recently extended the original Schoen--Yau method \cite{SY:condensation} to the setting of stable free boundary minimal surfaces. As above, we use $\mu$-bubbles to give an alternative proof of this fact.\footnote{In fact, we prove a slightly stronger statement than \cite{CarlottoFranz}, as needed for the warped product inductive descent argument from \cite{SY:sing-PMT}.}

\begin{lemm}\label{lemm:bdry-2-dim-diam-bds}
For compact $2$-dimensional Riemannian manifold $(\Sigma^2,g)$ with boundary, suppose there is a smooth function $\lambda > 0$ so that 
\[
\Delta_\Sigma \lambda  \leq - (K_0-K_\Sigma) \lambda + \tfrac 12 \lambda^{-1} |\nabla_\Sigma \lambda|^2 
\]
in $\Sigma$ for some $K_0 \in (0,\infty)$. Suppose also that $\bangle{\nabla_\Sigma \lambda,\eta} = - k_{\partial\Sigma} \lambda$ along $\partial\Sigma$ for $\eta$ the outwards pointing unit normal. Then, $\diam_g \Sigma \leq \sqrt{\tfrac {2}{K_0}} \pi$. 
\end{lemm}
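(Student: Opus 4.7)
The strategy is to adapt the $\mu$-bubble argument of Lemma \ref{lemm:closed-2-dim-diam-bds} to the free-boundary setting, using the $\mu$-bubbles furnished by Proposition \ref{prop:fb-mu-bub}. The crucial observation is that the boundary hypothesis $\langle \nabla_\Sigma \lambda, \eta\rangle = -k_{\partial\Sigma}\lambda$ rearranges to $k_{\partial\Sigma} + \lambda^{-1}\langle \nabla_\Sigma \lambda, \eta\rangle = 0$, which is exactly the warped free-boundary condition \eqref{eq:warped-H-0-fb-mu-bub} required to apply Proposition \ref{prop:fb-mu-bub} with ambient $M = \Sigma$, warping function $u = \lambda$, and $\partial_0 M = \partial\Sigma$.

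Suppose for contradiction that $\diam_g \Sigma > L_0 := \sqrt{2/K_0}\,\pi$, and pick interior points $p, q \in \mathring\Sigma$ with $d(p,q) > L_0$ (we may assume $p,q \in \mathring\Sigma$ by density). Excising small balls $B_\ep(p)$ and $B_\ep(q)$ to play the role of $\partial_\pm M$, define $\rho$ and $h$ exactly as in Lemma \ref{lemm:closed-2-dim-diam-bds}, so that $K_0 + \tfrac12 h^2 + \langle \nabla_\Sigma h, \nu\rangle > 0$ for every unit vector $\nu$. Proposition \ref{prop:fb-mu-bub} then produces a (free-boundary) $\mu$-bubble $\gamma$ meeting $\partial\Sigma$ orthogonally.

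Apply the stability inequality of Proposition \ref{prop:fb-mu-bub} to $\gamma$ with test function $\psi = \lambda^{-1/2}$. The bulk computation is word-for-word the same as in Lemma \ref{lemm:closed-2-dim-diam-bds}: using the first-variation identity, integrating $-\lambda^{-1}\Delta_\gamma\lambda$ by parts on $\gamma$, and invoking the PDE satisfied by $\lambda$, one arrives at
\[
\int_\gamma \bigl(K_0 + \tfrac12 h^2 + \langle \nabla_\Sigma h,\nu_\gamma\rangle\bigr)\, d\cH^1 + (\text{boundary terms on } \partial\gamma) \leq 0,
\]
which will contradict the pointwise positivity from the previous paragraph provided the boundary contributions vanish. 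Integration by parts of $-\lambda^{-1}\Delta_\gamma\lambda$ produces $-\sum_{x \in \partial\gamma}\lambda^{-1}\langle\nabla_\gamma\lambda,\tau\rangle$, where $\tau$ is the outward conormal of $\gamma$ at $\partial\gamma$; by orthogonality $\tau = \eta$, and the boundary hypothesis on $\lambda$ turns this into $+\sum_{x \in \partial\gamma} k_{\partial\Sigma}(x)$. Proposition \ref{prop:fb-mu-bub} supplies the companion boundary term $-\int_{\partial\gamma}A_{\partial\Sigma}(\nu_\gamma,\nu_\gamma)\psi^2\lambda\,d\cH^0$; since $\gamma$ meets $\partial\Sigma$ orthogonally, $\nu_\gamma$ is tangent to the $1$-dimensional $\partial\Sigma$, so $A_{\partial\Sigma}(\nu_\gamma,\nu_\gamma) = k_{\partial\Sigma}$, and with $\psi = \lambda^{-1/2}$ this equals $-\sum_{x \in \partial\gamma} k_{\partial\Sigma}(x)$. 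The two boundary contributions cancel, yielding the desired contradiction.

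The main point that must be verified is exactly this double cancellation: both the integration-by-parts term and the free-boundary stability term are proportional to $k_{\partial\Sigma}$, and the hypothesis $\langle\nabla_\Sigma\lambda,\eta\rangle = -k_{\partial\Sigma}\lambda$ is precisely what is needed both to guarantee existence and stability of the $\mu$-bubble via \eqref{eq:warped-H-0-fb-mu-bub} and to align the signs so that the boundary terms cancel. Once that compatibility is observed, the argument reduces cleanly to the closed case already established in Lemma \ref{lemm:closed-2-dim-diam-bds}.
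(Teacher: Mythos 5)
Your proposal is correct and follows essentially the same route as the paper's proof: a free-boundary $\mu$-bubble from Proposition \ref{prop:fb-mu-bub} with $u=\lambda$ and $\partial_0M=\partial\Sigma$, the test function $\psi=\lambda^{-1/2}$, and cancellation of the integration-by-parts boundary term against the $A_{\partial\Sigma}(\nu_\gamma,\nu_\gamma)=k_{\partial\Sigma}$ term via the Robin condition $\bangle{\nabla_\Sigma\lambda,\eta}=-k_{\partial\Sigma}\lambda$. Your only addition is to make explicit that this hypothesis is exactly the warped mean-curvature condition \eqref{eq:warped-H-0-fb-mu-bub} needed to run Proposition \ref{prop:fb-mu-bub}, a point the paper leaves implicit.
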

\begin{proof}
If the diameter bound fails, we can argue precisely as in Lemma \ref{lemm:closed-2-dim-diam-bds} to find a free boundary $\mu$-bubble $\Omega$. Choose some curve $\gamma$ in $\partial \Omega$. If $\gamma$ is a closed loop, the calculations in Lemma \ref{lemm:closed-2-dim-diam-bds} yield a contradiction. As such, we assume that $\gamma : [a,b] \to \Sigma$ is an arc with $\partial\gamma \subset \partial\Sigma$. Using Proposition \ref{prop:fb-mu-bub}, the calculation above carries over to yield
\begin{align*}
0  & \leq \int_\gamma \left(\tfrac 1 4  \lambda^{-2} |\nabla_\gamma \lambda|^2  - K_\Sigma  +\lambda^{-1} (\Delta_\Sigma \lambda - \Delta_\gamma \lambda)  \right.\\
        &\left. \qquad\qquad -\tfrac 12 \lambda^{-2} \bangle{\nabla_\Sigma \lambda,\nu_\gamma}^2 - \tfrac 12 h^2 - \bangle{\nabla_\Sigma h ,\nu_\gamma}  \vphantom{\tfrac12}\right) d\cH^{1}\\
& \qquad - (k_{\partial\Sigma}(\gamma(b)) - k_{\partial\Sigma}(\gamma(a))) .
\end{align*}
At this point, we integrate by parts and pick up boundary terms 
\begin{align*}
0  & \leq \int_\gamma \left( - \tfrac 3 4  \lambda^{-2} |\nabla_\gamma \lambda|^2  - K_\Sigma  +\lambda^{-1} (\Delta_\Sigma \lambda)  \right.\\
        &\left. \qquad\qquad -\tfrac 12 \lambda^{-2} \bangle{\nabla_\Sigma \lambda,\nu_\gamma}^2 - \tfrac 12 h^2 - \bangle{\nabla_\Sigma h ,\nu_\gamma}  \vphantom{\tfrac12}\right) d\cH^{1}\\
& \qquad - ( \lambda(b)^{-1} \bangle{\nabla_\Sigma \lambda,\eta}(\gamma(b)) -  \lambda(a)^{-1} \bangle{\nabla_\Sigma \lambda,\eta}(\gamma(a)))\\
& \qquad - (k_{\partial\Sigma}(\gamma(b)) - k_{\partial\Sigma}(\gamma(a))) \\
& = \int_\gamma \left( - \tfrac 3 4  \lambda^{-2} |\nabla_\gamma \lambda|^2  + \tfrac 12 \lambda^{-2} |\nabla_\Sigma \lambda|^2   -\tfrac 12 \lambda^{-2} \bangle{\nabla_\Sigma \lambda,\nu_\gamma}^2  \right.\\
        &\left. \qquad\qquad  - K_0  - \tfrac 12 h^2 - \bangle{\nabla_\Sigma h ,\nu_\gamma}  \vphantom{\tfrac12}\right) d\cH^{1}.
        \end{align*}
The proof is now completed as before. 
\end{proof}

\section{Proof of Theorem \ref{thm.main}} \label{sec:proof-of-thm-main}

For $n=4,5$, consider $(N^n,g)$ a smooth closed aspherical $n$-manifold with a Riemannian metric of non-negative scalar curvature. Running the Ricci flow for a short time yields a metric with positive scalar curvature unless $g$ is Ricci flat. However, if $g$ is Ricci flat, the Cheeger--Gromoll splitting theorem \cite[Theorem 3]{CG:split} implies that the universal cover $(\tilde N,\tilde g)$ splits isometrically as $(\tilde N',\tilde g')\times \RR^k$ for $\tilde N'$ compact. Because $N$ is aspherical, $\tilde N'$ is a point, so $(\tilde N,\tilde g)$ is flat $\RR^n$. 

As such, it suffices to prove that $N$ does not admit a metric of positive scalar curvature. We will assume that $n=5$, since if $(N^4,g)$ is a compact aspherical $4$-manifold with positive scalar curvature, then $N \times S^1$ is aspherical (cf.\ \cite[Proposition 4.2]{Hatcher2002AlgebraicTopology}) and has positive scalar curvature when equipped with the product metric. 

To summarize the above discussion, we can assume that $(N^5,g)$ is a compact aspherical Riemannian $5$-manifold with scalar curvature $R_g \geq 5$. We write $\tilde N$ for the universal cover of $N$. By the results in Section \ref{sec:top}, there is a geodesic line $\sigma$ in the universal cover $\tilde N$ so that for any $L>0$ there is a compact two-sided hypersurface  with boundary $\hat M_4$ in $\tilde N$ with $d(\partial \hat M_4,\sigma(\RR)) \geq 3L$ and so that $\hat M_4$ has non-zero algebraic intersection with $\sigma$. We will take $L$ sufficiently large below. 

\subsection{The $\sigma$-transversal $4$-dimensional area-minimizer} Find a smooth two-sided compact area-minimizing hypersurface $M_4$ homologous to $\hat M_4$ relative to $\partial M_4 = \partial \hat M_4$. Stability of $M_4$ and $R_g \geq 5$ implies that 
\[
\int_{M_4} (|\nabla_{M_4} \psi|^2 - \tfrac 12 (5-R_{M_4}+ |A|^2)\psi^2 \, d\cH^4 \geq 0,\qquad \forall\psi\in C^1_0(M). 
\]
As such, there is $u_4 \in C^\infty_0(M_4)$, $u_4>0$ on $\mathring M_4$, with 
\begin{equation}\label{eq:u4-PDE}
\Delta_{M_4} u_4 \leq -\tfrac 12 (5-R_{M_4}) u_4 . 
\end{equation}
Our goal is to show that for $L$ sufficiently large, we can find $\Omega_4 \subset M_4$ so that $\partial\Omega_4$ can be filled in by $4$-chains that avoid $\sigma(\RR)$. Then, adding these chains to $\Omega_4$ we find $4$-cycle with non-trivial algebraic intersection with $\sigma$, contradicting $H_4(\tilde N) = 0$. 

\subsection{The $3$-dimensional $\mu$-bubble} Pick $\rho_4 : \tilde N \to \RR$ a smoothing of $d(\cdot,\sigma(\RR))$ so that $|\Lip \rho_4| \leq 2$, $\rho_4 \geq L + 4\pi + 1$ on $\partial M_4$ and $\rho_4 = 0$ on $M_4\cap\sigma(\RR)$.  Fix $\ep>0$ small so that $L+4\pi +\ep$ and $L-\ep$ are regular values of $\rho_4$ and then define 
\[
\tilde \rho_4(x) = \frac{\rho_4(x) - L - 2\pi}{4 + \frac 2\pi \ep}, \qquad h_4(x) = - \tan \left(\tilde\rho_4(x)\right)
\]
for $x \in M_4' : = \{\tilde \rho_4(x) \in (-\tfrac \pi 2,\tfrac \pi 2) \}$. Observe that 
\begin{multline}\label{eq:h4-mu-bubb-calc}
1 + h_4(x)^2 - 2 |\nabla h_4| \\
\geq 1 + \tan^2(\tilde \rho_4(x)) - \frac{2}{4+\frac 2 \pi \ep} |\Lip \rho_4(x)| \frac{1}{\cos^2(\tilde \rho_4(x))} \geq 0
\end{multline}
on $M_4'$. Choosing $a_4 \sim 0$ a regular value of $\tilde\rho_4$, we can set $\Omega_0 = \tilde\rho^{-1}((-\infty,a_4))$ and use Proposition \ref{prop.existence.regularity} to minimize
\[
\cA_4(\Omega) : = \int_{\partial\Omega} u_4 \, d\cH^3 - \int_{M_4} (\chi_{\Omega} - \chi_{\Omega_0}) h_4 u_4 \, d\cH^4
\]
among Caccioppoli sets $\Omega$ with $\Omega\Delta\Omega_0 \Subset M_4'$. Denote this minimizer by $\Omega_4$ and let $M_3: = \partial\Omega_4$. Note that $M_3$ is a cycle and 
\[
d(M_3,\sigma(\RR)) \geq \tfrac 12 (L-\ep).
\]

Using \eqref{eq:u4-PDE}, \eqref{eq:h4-mu-bubb-calc} and Lemma \ref{lemm:2nd-var} we see that
\[
\int_{M_3} \left( |\nabla_{M_3} \psi|^2 u_4 - \tfrac 12 (4-R_{M_3}) \psi^2 u_4 - (\Delta_{M_3} u_4) \psi^2 \right) \, d\cH^3 \geq 0
\]
for all $\psi \in C^1(M_3)$. In particular, there is $u_3$ with
\[
 \Div_{M_3} (u_4 \nabla_{M_3} u_3) \leq - \tfrac 12 (4-R_{M_3}) u_3 u_4 - (\Delta_{M_3} u_4)u_3.
\]
This implies that
\begin{align}\label{eq:deltaM3-u3u4}
\Delta_{M_3} (u_3u_4) & = \Div_{M_3}(u_3 \nabla_{M_3} u_4) +  \Div_{M_3}(u_4 \nabla_{M_3} u_3) \\
& \leq - \tfrac 12 (4-R_{M_3}) u_3 u_4 + \bangle{\nabla_{M_3} u_3,\nabla_{M_3} u_4}. \nonumber
\end{align}
As mentioned above, our goal is now to show that each component of $M_3$ can be filled by a chain with uniformly bounded diameter as $L\to\infty$. Note that $M_3$ is Yamabe positive but it might hold that the diameter of each component of $M_3$ is unbounded as $L\to\infty$. 

\subsection{A slice and dice procedure for $M_3$}\label{section.slice.dice} We can consider each component of $M_3$ separately, so it suffices to assume that $M_3$ is connected. It will be important later to observe that $M_3$ is orientable, by construction. 

Set $\lambda_3 : = u_3u_4>0$. For $\Sigma^2 \subset M_3$, we define 
\[
\hat \cA_3(\Sigma) = \int_\Sigma  \lambda_3 . 
\]
Arguing as in \cite{SY:sing-PMT}, we can derive the following result for stable critical points of $\hat\cA_3$. 
\begin{lemm}\label{lem:hat-cA-3-crit-pts}
Suppose that $\Sigma$ is a connected (two-sided) stable critical point of $\hat\cA_3$. Then $\Sigma$ is a topological sphere with $\cH^2(\Sigma) \leq 2\pi$ and $\diam \Sigma \leq \pi $. 
\end{lemm}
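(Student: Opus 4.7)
The functional $\hat\cA_3$ is, formally, the warped $\mu$-bubble functional from Section \ref{sec:warped-mu-bub} with weight $u=\lambda_3$ and prescribing function $h\equiv 0$ in the ambient $M_3$. So $\Sigma$ satisfies $H=-\lambda_3^{-1}\langle\nabla_{M_3}\lambda_3,\nu\rangle$ and, by the stability inequality of Lemma \ref{lemm:2nd-var} (before the substitution $\tfrac12H^2u=\cdots$), for every $\psi\in C^1(\Sigma)$,
\begin{align*}
0 \le \int_\Sigma\!\Bigl(&|\nabla_\Sigma\psi|^2\lambda_3 -\tfrac12(R_{M_3}-R_\Sigma+|\mathring A|^2)\psi^2\lambda_3\\
&-\tfrac34\lambda_3^{-1}\langle\nabla\lambda_3,\nu\rangle^2\psi^2+(\Delta_{M_3}\lambda_3-\Delta_\Sigma\lambda_3)\psi^2\Bigr)\,d\cH^2.
\end{align*}
The plan is to substitute $\psi=\lambda_3^{-1/2}f$ and then feed in the ambient inequality \eqref{eq:deltaM3-u3u4}, bounding the cross term by the AM--GM estimate $\langle\nabla_{M_3}u_3,\nabla_{M_3}u_4\rangle\le(4\lambda_3)^{-1}|\nabla_{M_3}\lambda_3|^2$ (which follows from $|\nabla\lambda_3|^2=|u_3\nabla u_4+u_4\nabla u_3|^2\ge 4u_3u_4\langle\nabla u_3,\nabla u_4\rangle$). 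After routine integration by parts and discarding the non-positive terms involving $|\mathring A|^2$ and $\lambda_3^{-2}\langle\nabla\lambda_3,\nu\rangle^2$, I expect the clean statement
\[
\int_\Sigma(2-K_\Sigma)f^2+\tfrac12\int_\Sigma\lambda_3^{-1}\Delta_\Sigma\lambda_3\,f^2\le\int_\Sigma|\nabla_\Sigma f|^2\qquad\forall f\in C^1(\Sigma).
\]

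For the topology and area bound, I would take $f\equiv 1$. Since $\Sigma$ is closed, $\int_\Sigma\Delta_\Sigma\log\lambda_3=0$, so $\int_\Sigma\lambda_3^{-1}\Delta_\Sigma\lambda_3=\int_\Sigma\lambda_3^{-2}|\nabla_\Sigma\lambda_3|^2\ge 0$. Combined with Gauss--Bonnet $\int_\Sigma K_\Sigma=2\pi\chi(\Sigma)$ this gives $2\cdot\cH^2(\Sigma)\le 2\pi\chi(\Sigma)$. Two-sidedness of $\Sigma$ in the orientable $M_3$ forces $\chi(\Sigma)\le 2$, and positivity of area then pins down $\chi(\Sigma)=2$ (so $\Sigma$ is a sphere) and $\cH^2(\Sigma)\le 2\pi$.

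For the diameter bound I would quote Lemma \ref{lemm:closed-2-dim-diam-bds} with $K_0=2$, so the task is to construct $\lambda>0$ on $\Sigma$ with $\Delta_\Sigma\lambda\le-(2-K_\Sigma)\lambda+\tfrac12\lambda^{-1}|\nabla_\Sigma\lambda|^2$. The displayed inequality says the Schr\"odinger operator $-\Delta_\Sigma+(K_\Sigma-2)-\tfrac12\lambda_3^{-1}\Delta_\Sigma\lambda_3$ has non-negative principal eigenvalue, so it admits a positive eigenfunction $v>0$ with $v^{-1}\Delta_\Sigma v\le K_\Sigma-2-\tfrac12\lambda_3^{-1}\Delta_\Sigma\lambda_3$. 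The natural guess is $\lambda:=v\,\lambda_3^{1/2}$. Using the identity $\lambda^{-1}\Delta\lambda-\lambda^{-2}|\nabla\lambda|^2=\Delta\log\lambda=\Delta\log v+\tfrac12\Delta\log\lambda_3$, the two $\tfrac12\lambda_3^{-1}\Delta_\Sigma\lambda_3$ contributions cancel, leaving
\[
\lambda^{-1}\Delta\lambda\le K_\Sigma-2+\lambda^{-2}|\nabla\lambda|^2-v^{-2}|\nabla v|^2-\tfrac12\lambda_3^{-2}|\nabla\lambda_3|^2.
\]
Expanding $\lambda^{-2}|\nabla\lambda|^2=v^{-2}|\nabla v|^2+v^{-1}\lambda_3^{-1}\langle\nabla v,\nabla\lambda_3\rangle+\tfrac14\lambda_3^{-2}|\nabla\lambda_3|^2$ and applying AM--GM in the form $|v^{-1}\lambda_3^{-1}\langle\nabla v,\nabla\lambda_3\rangle|\le\tfrac12 v^{-2}|\nabla v|^2+\tfrac12\lambda_3^{-2}|\nabla\lambda_3|^2$ gives $\lambda^{-1}\Delta\lambda\le K_\Sigma-2+\tfrac12\lambda^{-2}|\nabla\lambda|^2$, which is exactly the hypothesis of Lemma \ref{lemm:closed-2-dim-diam-bds} with $K_0=2$.

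The main obstacle I anticipate is arithmetic rather than conceptual: the exponent $\tfrac12$ on $\lambda_3$ in $\lambda=v\lambda_3^{1/2}$ is forced in order that (i) the $\tfrac12\lambda_3^{-1}\Delta_\Sigma\lambda_3$ terms cancel and (ii) the remaining cross term $v^{-1}\lambda_3^{-1}\langle\nabla v,\nabla\lambda_3\rangle$ can be absorbed by the available $v^{-2}|\nabla v|^2$ and $\lambda_3^{-2}|\nabla\lambda_3|^2$ pieces with room to spare. Getting the factor of $\tfrac12$ in the gradient term correct in step one (so $K_0=2$ rather than something worse) likewise requires using the full $\tfrac34 H^2$ coefficient in the second variation, not the standard $\tfrac12 H^2$.
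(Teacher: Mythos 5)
Your argument is correct and is essentially the paper's own proof: the same stability inequality for the weight $\lambda_3=u_3u_4$ combined with \eqref{eq:deltaM3-u3u4}, the test function $\lambda_3^{-1/2}$ plus Gauss--Bonnet and orientability for the sphere/area conclusion, and a positive principal eigenfunction multiplied by a power of $\lambda_3$ to verify the hypothesis of Lemma \ref{lemm:closed-2-dim-diam-bds} with $K_0=2$ --- your $\lambda=v\,\lambda_3^{1/2}$ is exactly the paper's $\lambda_2=u_2\lambda_3$ after conjugating the weighted form by $\lambda_3^{1/2}$. The only inaccuracy is your closing claim that the full $\tfrac34 H^2$ coefficient is required: the paper obtains $K_0=2$ keeping only $\tfrac12 H^2$ (as in the statement of Lemma \ref{lemm:2nd-var}), since the leftover tangential gradient term is absorbed in the weighted formulation (and in your scheme it is already handled by the sharper bound $\langle\nabla_{M_3}u_3,\nabla_{M_3}u_4\rangle\le(4\lambda_3)^{-1}|\nabla_{M_3}\lambda_3|^2$), so this remark is unnecessary but does not affect the validity of your proof.
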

\begin{proof}
Lemma \ref{lemm:1st-var} implies that $H = - \lambda_3^{-1} \bangle{\nabla_{M_3} \lambda_3,\nu}$. Similarly, Lemma \ref{lemm:2nd-var} combined with \eqref{eq:deltaM3-u3u4} yields 
\begin{align*}
0 & \leq \int_\Sigma \left(|\nabla_\Sigma \psi|^2 \lambda_3 - (2   - K_\Sigma )\psi^2 \lambda_3 - (\Delta_\Sigma \lambda_3)\psi^2    \right.\\
&\left. \qquad\qquad + \bangle{\nabla_{M_3}u_3,\nabla_{M_3}u_4}\psi^2  -\tfrac 12 \lambda_3^{-1} \bangle{\nabla_{M_3} \lambda_3,\nu}^2\psi^2   
 \right)d\cH^{2}.
\end{align*}
Note that 
\[
 2 \lambda_3 \bangle{\nabla_{M_3}u_3,\nabla_{M_3}u_4} -  \bangle{\nabla_{M_3} \lambda_3,\nu}^2   \leq |\nabla_{M_3}\lambda_3|^2  - \bangle{\nabla_{M_3} \lambda_3,\nu}^2  = |\nabla_{\Sigma}\lambda_3|^2
\]
This yields
\begin{equation}\label{eq:u2-var-eqn}
0 \leq \int_\Sigma \left(|\nabla_\Sigma \psi|^2 \lambda_3 - (2   - K_\Sigma )\psi^2 \lambda_3 - (\Delta_\Sigma \lambda_3)\psi^2 + \tfrac 12 \lambda_3^{-1}  |\nabla_{\Sigma}\lambda_3|^2   \psi^2   
 \right)d\cH^{2}
\end{equation}
Take $\psi = \lambda_3^{-\frac 12}$ to find
\begin{align*}
0 & \leq \int_\Sigma \left(\tfrac 34 \lambda_3^{-2} |\nabla_\Sigma \lambda_3|^2  - (2   - K_\Sigma )  - \lambda_3^{-1}(\Delta_\Sigma \lambda_3)  \right)d\cH^{2}.
\end{align*}
Integrating by parts on the last term, this implies
\[
2 \cH^2(\Sigma) \leq \int_\Sigma K_\Sigma \, d\cH^2 = 2\pi \chi(\Sigma).
\]
As such, $\Sigma$ is a topological sphere and $\cH^2(\Sigma) \leq 2\pi$. 

Returning to \eqref{eq:u2-var-eqn}, we find a smooth function $u_2>0$ on $\Sigma$ with 
\[
\Div_\Sigma(\lambda_3 \nabla_\Sigma u_2) \leq - (2-K_\Sigma) u_2\lambda_3 - (\Delta_\Sigma\lambda_3)u_2  + \tfrac 12 \lambda_3^{-1}  |\nabla_{\Sigma}\lambda_3|^2    u_2.
\]
Set $\lambda_2 = u_2\lambda_3 = u_2u_3u_4$. We compute
\[
\Delta_\Sigma \lambda_2  \leq - (2-K_\Sigma) \lambda_2 + \bangle{\nabla_\Sigma u_2,\nabla_\Sigma \lambda_3}  + \tfrac 12 \lambda_3^{-1}  |\nabla_{\Sigma}\lambda_3|^2 .
\]
Now, we observe that 
\[
 \lambda_2^{-1} |\nabla_\Sigma \lambda_2|^2 \geq 2 \bangle{\nabla_\Sigma u_2,\nabla_\Sigma \lambda_3} + \lambda_3^{-1} |\nabla_\Sigma \lambda_3|^2,
\]
so 
\[
\Delta_\Sigma \lambda_2  \leq - (2-K_\Sigma) \lambda_2 + \tfrac 12 \lambda_2^{-1} |\nabla_\Sigma \lambda_2|^2 .
\]
As such, the diameter bounds follow from Lemma \ref{lemm:closed-2-dim-diam-bds}. 
\end{proof}

We first use $\hat\cA_3$-minimization to slice $M_3$ into a manifold with simple second homology. This will allow us to use free boundary $\mu$-bubbles to dice the resulting manifold into pieces that can be filled within bounded distance. 
\begin{lemm}
There are $\hat \Sigma_1,\dots,\hat \Sigma_k\subset M_3$ pairwise disjoint two-sided stable critical points of $\hat\cA_3$ so that the manifold with boundary $\hat M_3 := M_3 \setminus (\cup_{j=1}^k \hat\Sigma_j)$ is connected and has $H_2(\partial \hat M_3) \to H_2(\hat M_3)$ surjective. 
\end{lemm}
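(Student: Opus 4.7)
I will construct $\hat\Sigma_1,\dots,\hat\Sigma_k$ by an iterative homology-reduction: at each step, minimize $\hat\cA_3$ in a non-trivial $H_2$-class of the currently cut manifold, cut along the resulting stable sphere, and repeat until the boundary classes span all of $H_2$. Set $M_3^{(0)} := M_3$. Inductively, $M_3^{(j)}$ will be a compact connected orientable $3$-manifold whose boundary is a finite disjoint union of $2$-spheres (two copies of each $\hat\Sigma_i$, $i \le j$). Consider the inclusion-induced map $\iota_j : H_2(\partial M_3^{(j)}; \ZZ) \to H_2(M_3^{(j)}; \ZZ)$. If $\iota_j$ is surjective, stop and set $k:=j$, $\hat M_3 := M_3^{(j)}$; otherwise continue.

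\textbf{Extracting the next sphere.} Pick $\alpha \in H_2(M_3^{(j)}; \ZZ) \setminus \mathrm{image}(\iota_j)$, and minimize $\hat\cA_3(\Sigma) = \int_\Sigma \lambda_3 \, d\cH^2$ among integral $2$-cycles in the interior of $M_3^{(j)}$ representing $\alpha$. Compactness of integer rectifiable currents (using that $\lambda_3 > 0$ on the compact $M_3^{(j)}$) together with standard codimension-one interior regularity in dimension $3$ yields a smooth embedded stable minimizer $T = \sum_i n_i \Sigma'_i$ in the interior. Since $\mathrm{image}(\iota_j)$ is a subgroup and $T$ represents $\alpha \notin \mathrm{image}(\iota_j)$, some component $\hat\Sigma_{j+1} := \Sigma'_i$ has $[\hat\Sigma_{j+1}] \notin \mathrm{image}(\iota_j)$. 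It is two-sided (codimension one in an orientable manifold) and a stable critical point of $\hat\cA_3$, hence a topological $S^2$ by Lemma \ref{lem:hat-cA-3-crit-pts}. Being in the interior of $M_3^{(j)}$, it is automatically disjoint from $\hat\Sigma_1,\dots,\hat\Sigma_j$.

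\textbf{Non-separation, Mayer--Vietoris, termination.} Were $\hat\Sigma_{j+1}$ separating, one side would be a $3$-chain $A$ with $\partial A = \hat\Sigma_{j+1} \pm (\partial M_3^{(j)} \cap A)$, placing $[\hat\Sigma_{j+1}]$ in the image of $\iota_j$, a contradiction. Hence $\hat\Sigma_{j+1}$ is non-separating, so $M_3^{(j+1)}$ (the cut) remains compact, connected, orientable. Applying Mayer--Vietoris to $M_3^{(j)} = M_3^{(j+1)} \cup (S^2 \times [0,1])$ (glued along $S^2 \sqcup S^2$) and using $H_1(S^2) = 0$, one obtains the short exact sequence
\[
0 \to H_1(M_3^{(j+1)}) \to H_1(M_3^{(j)}) \to \ZZ \to 0,
\]
so $b_1(M_3^{(j+1)}) = b_1(M_3^{(j)}) - 1$. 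Since $b_1(M_3) < \infty$, the process terminates after exactly $k = b_1(M_3)$ steps, leaving $b_1(\hat M_3) = 0$.

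\textbf{Surjectivity and main obstacle.} At termination, the universal coefficient theorem gives $H^1(\hat M_3; \ZZ) = 0$, and Poincar\'e--Lefschetz duality for the compact orientable $3$-manifold with boundary $\hat M_3$ yields $H_2(\hat M_3, \partial \hat M_3; \ZZ) \cong H^1(\hat M_3; \ZZ) = 0$. Since $\partial \hat M_3$ is a disjoint union of spheres, $H_1(\partial \hat M_3) = 0$, and the long exact sequence of the pair collapses to give $H_2(\partial \hat M_3) \twoheadrightarrow H_2(\hat M_3)$, as desired. The main technical obstacle I anticipate is the sphere-extraction step: one must check that minimizing in an absolute interior $H_2$-class yields a smooth, embedded interior minimizer, extract a single component whose class remains outside $\mathrm{image}(\iota_j)$, and verify that this component is itself a stable critical point of $\hat\cA_3$. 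All three points are standard in GMT given that $\lambda_3$ is smooth and positive on the compact $M_3^{(j)}$ and that the pre-existing boundary spheres, being themselves $\hat\cA_3$-critical, cannot be approached profitably from the interior.
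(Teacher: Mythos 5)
Your proposal is correct and follows the paper's extraction scheme exactly (minimize $\hat\cA_3$ in a class outside the image of $\iota_j$, use that the boundary spheres are themselves $\hat\cA_3$-stationary to discard boundary components of the minimizer, extract a non-separating interior component, cut, repeat), but you prove termination by a genuinely different and more elementary route. The paper argues by geometric compactness: the area bound from Lemma~\ref{lem:hat-cA-3-crit-pts} together with curvature estimates from stability (citing Fischer-Colbrie--Schoen, do~Carmo--Peng, Pogorelov) forces an infinite sequence $\hat\Sigma_{k_1},\hat\Sigma_{k_2},\dots$ to converge subsequentially as one-sheeted graphs, which makes $\hat\Sigma_{k_{j+1}}$ homologous to $\hat\Sigma_{k_j}$ in the later cut manifold and contradicts the construction. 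You instead observe, via Mayer--Vietoris applied to the cut along a non-separating two-sided sphere, that each step strictly decreases $b_1$ by exactly one, so the process terminates in at most $b_1(M_3)$ steps; combined with the observation that $\iota_j$ surjective is equivalent to $b_1(M_3^{(j)})=0$ (Lefschetz duality plus $H_1(\partial M_3^{(j)})=0$ since the boundary is a union of spheres — the same duality argument the paper later uses in Lemma~\ref{lemm:slice-and-dice-top}), you get termination after exactly $b_1(M_3)$ cuts. Your argument avoids the curvature-estimate machinery entirely and yields the explicit count $k=b_1(M_3)$, which the paper's compactness argument does not provide; the paper's approach, on the other hand, would survive in settings where the topological bookkeeping is murkier. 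One point worth stating more carefully is that minimizing "among cycles in the interior" must really be phrased as minimizing in $M_3^{(j)}$ and then observing (as the paper does) that each component of the minimizer is either interior or coincides with a stationary boundary sphere — the latter components lie in $\mathrm{image}(\iota_j)$ and can be discarded; your closing remark gestures at this via the strong maximum principle, which is the right idea.
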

\begin{proof}
We proceed inductively. Assume that for $k\geq 0$ we have chosen 
\[
\hat\Sigma_1,\dots,\hat\Sigma_k \subset M_3
\]
pairwise disjoint two-sided stable critical points of $\hat\cA_3$. Suppose that for $\hat M_3^{(k)} := M_3 \setminus (\cup_{j=1}^k \hat\Sigma_j)$, the map $H_2(\partial \hat M_3^{(k)}) \to H_2(\hat M_3^{(k)})$ is not surjective. Consider $\alpha$ not in the image. Minimize $\hat\cA_3$ area in the homology class. Because each component of $\partial\hat M_3^{(k)}$ is smooth and stationary for $\hat \cA_3$, we can find a representative of $\alpha$ 
\[
\alpha = [\Sigma'_1] + \dots + [\Sigma'_\ell]
\]
with the connected surfaces $\Sigma_1',\dots,\Sigma_\ell'$ pairwise disjoint two-sided stable critical points. Moreover, each $\Sigma_j'$ is either contained in the interior of $\hat M_3^{(k)}$ or coincides with a component of $\partial\hat M_3^{(k)}$. By choice of $\alpha$, there must be some $j$ with $\Sigma_j'$ in the interior of $\hat M_3^{(k)}$, not separating, and not in the image of $H_2(\partial\hat M_3^{(k)})$. Define $\hat\Sigma_{k+1} = \Sigma'_j$. 

It suffices to show that this process terminates. If not, we obtain a sequence $\hat \Sigma_1,\dots,\hat \Sigma_k,\dots$ of stable critical points of $\hat\cA_3$. By Lemma \ref{lem:hat-cA-3-crit-pts}, each $\hat\Sigma_k$ has bounded area. Moreover, by a standard blow-up argument using \cite{fischer-colbrie-schoen,docarmo-peng,pogorelov}, the $\hat\Sigma_k$ have uniform curvature bounds. As such, passing to a subsequence, the $\hat\Sigma_{k_1},\hat\Sigma_{k_2},\dots$ are converging as one-sheeted graphs to some $\hat\Sigma_\infty$. However, this clearly contradicts the construction of the $\hat\Sigma_j$ above, since it shows that $\hat\Sigma_{k_{j+1}}$ is homologous to $\hat\Sigma_{k_{j}}$ in $\hat M_3^{(k_{j+1})}$, for $j$ large. This completes the proof.
\end{proof}

Choose $\hat M_3$ as in the previous lemma. We recall that for $\Omega \subset \hat M_3$, we write $\partial\Omega$ for the topological boundary. Suppose that $\Omega \subset \hat M_3$ is a connected region with corners and $\Omega\neq M$. We assume that $\partial \Omega$ consists of smooth properly embedded surfaces in $\hat M_3$.
\begin{lemm}\label{lemm:slice-and-dice-top}
A connected component of $\hat M_3 \setminus \Omega$ contains exactly one component of $\partial \Omega$. 
\end{lemm}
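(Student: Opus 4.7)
The statement splits into two parts for each connected component $R$ of $\hat M_3 \setminus \Omega$: at least one, and at most one, component of $\partial'\hat M_3$ is contained in $\overline R$. The existence (``at least one'') part is essentially a local topological argument. Since $\Omega$ is non-empty and $\Omega \neq \hat M_3$, the region $R$ has non-empty topological boundary inside $\hat M_3$, which must meet $\partial \Omega$ in some component $S$. The boundary curves $\partial S\subset \partial\hat M_3$ locally split a tubular neighborhood of $\partial S$ in $\partial\hat M_3$ into two sides, and the side adjacent to $R$ across $S$ extends to the cut piece of $\partial\hat M_3$ on $R$'s side — a component of $\partial'\hat M_3$ contained in $\overline R$.

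The uniqueness (``at most one'') part is where the main difficulty lies, and my plan is to use the surjectivity of $H_2(\partial\hat M_3)\to H_2(\hat M_3)$ (the crucial topological output of the slicing step) together with the connectedness of both $\Omega$ and $R$. Suppose for contradiction that $\overline R$ contains two distinct cut pieces $P_1,P_2\in \partial'\hat M_3$, adjacent across dicing surfaces $S_1,S_2\subset \partial\Omega$ to $\Omega$-side cut pieces $P_1',P_2'$. I would form a relative $1$-cycle $[\gamma]\in H_1(\hat M_3, \partial\hat M_3)$ by joining $P_1$ to $P_2$ via a path in $R$, joining $P_1'$ to $P_2'$ via a path in $\Omega$, and closing up via short transverse segments across $S_1,S_2$. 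Since $[\partial R]=0$ in $H_2(\hat M_3)$ (as $\partial R$ bounds the compact closure of $R$), the Poincar\'e--Lefschetz intersection pairing $[\gamma]\cdot [\partial R]$ vanishes; on the other hand, the surjectivity hypothesis lets one deform the $\partial\hat M_3$-portion of $\partial R$ off of $\gamma$ through a representative cycle in $\partial\hat M_3$, reducing the pairing to the transverse intersection count of $\gamma$ with $S_1\cup S_2$, which I expect to force a non-zero value after orienting things carefully.

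The main technical obstacle is precisely to choose $\gamma$ and orient the arcs so that its two crossings with $S_1$ and $S_2$ contribute with the same sign rather than cancelling; this is where the specific structure of $\Omega$ (connectedness, and the disc-type nature of the dicing surfaces built into the slice-and-dice construction) must be used. The surjectivity assumption is essential: without it, one could produce ``chain-like'' dicing configurations — e.g.\ two parallel discs on different boundary components of a cylinder — in which a single component of $\hat M_3\setminus\Omega$ legitimately touches two cut pieces. The slicing step is designed precisely to kill such extra interior $2$-cycles in $\hat M_3$ so that the intersection argument closes off.
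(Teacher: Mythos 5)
Your overall plan is pointed in the right direction: you correctly identify the surjectivity of $H_2(\partial\hat M_3)\to H_2(\hat M_3)$ (the output of the slicing step) as the crucial input, and the loop construction through $\Omega$ and $R$ is essentially the one the paper uses. However, the core intersection-number computation you propose is self-defeating, and the gap you flag (``choose $\gamma$ and orient the arcs so that its two crossings with $S_1$ and $S_2$ contribute with the same sign rather than cancelling'') is not a technicality to be resolved but an obstruction that cannot be resolved. Pairing $\gamma$ with the full cycle $\partial R$ (equivalently, with all of $\partial\Omega$) computes the signed number of times $\gamma$ exits $\Omega$ minus the number of times it enters. Since $\gamma$ is a closed loop, these counts are always equal; the two crossings with $S_1$ and $S_2$ carry opposite signs with respect to the outward normal of $\partial\Omega$, and the pairing is identically $0$ for any configuration. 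You would derive $0=0$, not a contradiction, no matter how $\gamma$ is chosen.

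The paper sidesteps this by pairing $\gamma$ against a \emph{single} component $\Sigma'\subset\partial\Omega$ rather than against all of $\partial R$: the loop $\sigma$ crosses $\Sigma'$ transversely in exactly one point, so $[\sigma]\cdot[\Sigma']=\pm1$, hence $[\sigma]$ is non-torsion in $H_1(\hat M_3)$. The surjectivity hypothesis is then used in a separate step, and crucially together with the fact that every component of $\partial\hat M_3$ is a sphere (Lemma~\ref{lem:hat-cA-3-crit-pts}), so $H_1(\partial\hat M_3)=0$: the long exact sequence of the pair gives $H_2(\hat M_3,\partial\hat M_3)=0$, Lefschetz duality gives $H^1(\hat M_3)=0$, and universal coefficients forces $H_1(\hat M_3)$ to be all torsion --- contradicting the non-torsion class $[\sigma]$. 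Your sketch omits the sphere condition on $\partial\hat M_3$, which is what kills $H_1(\partial\hat M_3)$ in the exact sequence and hence is indispensable, and it tries to place the surjectivity inside the intersection count, where it does not help. If you replace the pairing with $\partial R$ by pairing with one component of $\partial\Omega$ and add the $H_2(\hat M_3,\partial\hat M_3)=0$ step, your argument becomes the paper's.
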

\begin{proof}
Clearly a component of $\hat M_3 \setminus \Omega$ contains at least one component of $\partial  \Omega$. Suppose, for contradiction, that some component $\Omega'$ of $\hat M_3 \setminus \Omega$ contained two components of $\partial'\Omega$, $\Sigma_1$ and $\Sigma_2$. For $j=1,2$, take a point $p_j\in \Sigma_j$. Then we can find curves $\gamma\in \Omega$, $\gamma'\in \Omega'$ connecting $p_1, p_2$. The concatenated curve $\sigma=\gamma\overline{\gamma}'$ is then an embedded $S^1$ that intersects $\Sigma_1$ transversely in precisely one point. This implies that $[\sigma]$ is not torsion in $H_1(\hat M_3)$. 

On the other hand, the long exact sequence in homology for a pair yields
\[
H_2(\partial \hat M_3) \to  H_2(\hat M_3)\to H_2(\hat M_3,\partial \hat M_3) \to H_1(\partial \hat M_3) = 0.
\]
The final term vanishes since $\partial\hat M_3$ consists of spheres, by Lemma \ref{lem:hat-cA-3-crit-pts}. Thus we conclude that $H_2(\hat M_3,\partial M_3) = 0$. Lefschetz duality (cf.\ \cite[Theorem 3.43]{Hatcher2002AlgebraicTopology}) implies that $H^1(\hat M_3) = 0$, so the universal coefficients theorem implies that $H_1(\hat M_3)$ is torsion. This is a contradiction, finishing the proof. 
\end{proof}

We now show how to inductively dice $\hat M_3$ by finitely many (guaranteed by the condition (1) below, since $\hat M_3$ is compact) free boundary $\mu$-bubbles of controlled diameter and boundary behavior. Fix $p$ in the interior of $\hat M_3$. For technical reasons, we start by choosing $\Omega_1 = B_\ep(p)$, chosen so that $B_\ep(p)$ is contained in the interior of $\hat M_3$. Assume that we have chosen regions 
\[
\Omega_1 \subset \Omega_2 \subset \dots \subset \Omega_k
\]
with the following properties:
\begin{enumerate}
\item $d_{\hat M_3}(\partial\Omega_{j+1},\partial\Omega_j) \ge \tfrac{2\pi}{3}$,
\item each component of $\Omega_{j+1}\setminus \Omega_j$ has diameter at most $10\pi$
\item any component $\Upsilon\subset \partial\Omega_j$ has $\diam \Upsilon \leq \pi$, and
\item each component of $\partial\Omega_j$ is either a topological sphere or a topological disk with boundary in $\partial\hat M_3$.
\end{enumerate}

Assume that $\Omega_k \neq \hat M_3$ and that there is some $p \in \hat M_3$ with $d(p,\Omega_k) > 4\pi$ (otherwise we set $\Omega_{k+1} = \hat M_3$). 

We now choose a $\mu$-bubble based on a smoothing $\rho$ of $d(\cdot,\Omega_k)$ such that $d(\cdot, \Omega_k)\le \rho\le \tfrac 32 d(\cdot, \Omega_k)$, $|\Lip \rho| < \tfrac 32$ and $\rho|_{\Omega_k} = 0$ and then taking
\[
h(x) = -\tan ( \tfrac 1 3 (\rho(x)-\pi) - \tfrac \pi 2 ).
\]
Observe that 
\[
1 + h(x)^2 - 2|\nabla h| \geq 0,
\]
by a similar calculation to the one used above. 

By Proposition \ref{prop:fb-mu-bub}, we can find a free boundary $\mu$-bubble $\Omega_{k+1}$ inside $\{\pi\le \rho\le 4\pi\}$ minimizing 
\[
\cA_3(\Omega) = \int_{\partial^*\Omega} \lambda_3 \, d\cH^2 - \int_\Omega (\chi_\Omega - \chi_{\Omega_0}) \lambda_3 h \, d\cH^3.
\] 
Choose a component $\Sigma$ of $\partial \Omega_{k+1}$. Clearly condition (1) is satisfied since $h\to \infty$ at the surface $\{\rho=\pi\}$, which satisfies $d(\cdot,\Omega_k)\ge \tfrac{2\pi}{3}$.

\begin{figure}[htbp]
	\centering
	\includegraphics[width=\textwidth]{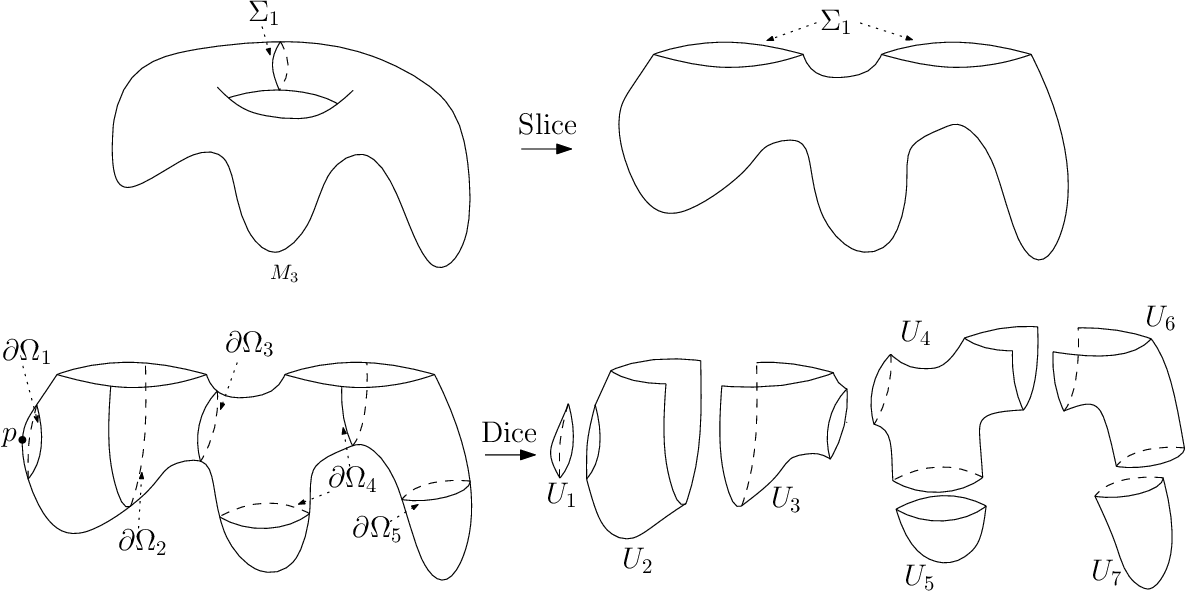}
	\caption{A slice and dice procedure on an $S^2\times S^1$}
	\label{pic3_slice_dice}
\end{figure}
\begin{lemm}
$\Omega_{k+1}$ satisfies property (2). 
\end{lemm}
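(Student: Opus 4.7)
The plan is to combine the slab confinement coming from the choice of the prescribing function $h$ with the diameter bound $\pi$ enjoyed by each component of the new $\mu$-bubble boundary, arriving at a triangle-inequality estimate of the required form.

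By the choice $h(x) = -\tan(\tfrac{1}{4}\rho(x) - \tfrac{\pi}{2})$, we have $h \to +\infty$ as $\rho \to 0^+$ and $h \to -\infty$ as $\rho \to (4\pi)^-$. The same first-variation barrier argument that appears in the proof of Proposition \ref{prop.existence.regularity} therefore forces the new portion of the boundary $\partial \Omega_{k+1} \setminus \partial \Omega_k$, and with it the entire set $\Omega_{k+1} \setminus \Omega_k$, to lie in the open slab $\{0 < \rho < 4\pi\}$. Because $\rho$ is a smoothing of $d(\cdot,\Omega_k)$ with $|\Lip \rho| < 2$, this confines every point $q \in \Omega_{k+1} \setminus \Omega_k$ to within ambient distance $4\pi + o(1)$ of $\partial \Omega_k$, where $o(1)$ can be made as small as we like by refining the smoothing.

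Let $V$ be a connected component of $\Omega_{k+1} \setminus \Omega_k$, and let $\Sigma$ be any component of $\partial\Omega_{k+1} \setminus \partial \Omega_k$ whose closure meets $\overline V$. The second variation inequality from Proposition \ref{prop:fb-mu-bub}, combined with the PDE \eqref{eq:deltaM3-u3u4} for $\lambda_3$ and the pointwise estimate $1 + h^2 - 2|\nabla h| \ge 0$ that drops out of the formula for $h$, reproduces the hypotheses of Lemma \ref{lemm:closed-2-dim-diam-bds} (or its free-boundary analogue Lemma \ref{lemm:bdry-2-dim-diam-bds}) with $K_0 = 2$, yielding $\diam_g \Sigma \leq \pi$. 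I would then argue that the outward gradient flow of $\rho$ transports every $q \in V$ to a point of $\Sigma$ along a path in $\overline V$ of length at most $4\pi + o(1)$, by monotonicity of $\rho$ along the flow together with the $\rho$-range bound and the smoothing of $\rho$. For $q, q' \in V$ flowing into the same $\Sigma$, the triangle inequality then gives
\[
d(q, q') \leq d(q, \Sigma) + \diam \Sigma + d(\Sigma, q') \leq 2(4\pi + o(1)) + \pi < 10\pi.
\]

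The main technical subtlety I expect is ensuring that each component $V$ is captured by a single component $\Sigma$ of the new bubble boundary, equivalently that the outward gradient flow of $\rho$ on $V$ has a common exit locus. This requires ruling out critical points of the smoothed $\rho$ inside the slab---routine with standard approximation---and showing that $V$ cannot wrap around several components of $\partial \Omega_{k+1} \setminus \partial \Omega_k$. If instead $V$ meets more than one such component, the same estimate is obtained by flowing inward along $-\nabla \rho$ to reach a single component of $\partial \Omega_k \cap \overline V$, whose diameter is at most $\pi$ by property (3) of the inductive construction; this inner flow remains in $\overline V$ for the same monotonicity reason.
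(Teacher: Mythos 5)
Your slab confinement (every point of $\Omega_{k+1}\setminus\Omega_k$ lies in $\{0<\rho<4\pi\}$, hence within roughly $4\pi$ of $\partial\Omega_k$) and the diameter bound $\leq\pi$ for the individual boundary spheres and disks are both correct, and the concluding triangle inequality is the same one the paper uses. The genuine gap is the uniqueness step: you must know that all points of a fixed connected component $V$ of $\Omega_{k+1}\setminus\Omega_k$ are close to \emph{one and the same} boundary component, and you try to extract this from the gradient flow of the smoothed distance function $\rho$. This cannot work. First, critical points of $\rho$ (or of any smoothing of $d(\cdot,\Omega_k)$) cannot be ``ruled out by standard approximation'': whenever the complement of $\Omega_k$ has nontrivial topology relative to its boundary, any such function must have interior critical points, so the flow trajectories need not reach $\partial\Omega_{k+1}$ or $\partial\Omega_k$ at all. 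Second, and more fundamentally, even if every point of $V$ could be flowed to some boundary component, nothing in your argument prevents different points of $V$ from landing on \emph{different} components of $\partial\Omega_k$ (or of $\partial\Omega_{k+1}$), and the mutual distance between distinct boundary components is completely uncontrolled: a priori $V$ could be a long tube joining two far-apart boundary spheres of $\Omega_k$, with every point within $4\pi$ of $\partial\Omega_k$ but $\diam V$ arbitrarily large. This is exactly the failure mode the paper flags when motivating the slice step (``if we did not slice first, \dots such a region could have arbitrarily large diameter''), and your fallback (``flow inward to reach a single component of $\partial\Omega_k\cap\overline V$'') simply asserts the uniqueness that is in question rather than proving it.

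The missing ingredient is the topological consequence of the slicing step, Lemma \ref{lemm:slice-and-dice-top}: since slicing made $H_2(\partial\hat M_3)\to H_2(\hat M_3)$ surjective and $\partial\hat M_3$ consists of spheres, $H_1(\hat M_3)$ is torsion, so every embedded surface arising in $\partial\Omega_k$ separates; hence each connected component of $\hat M_3\setminus\Omega_k$ (and a fortiori each component $V$ of $\Omega_{k+1}\setminus\Omega_k$) meets exactly one component $\Sigma_k$ of $\partial\Omega_k$. Granting this, any path of length $\leq 4\pi$ from $p\in V$ into $\Omega_k$ must first cross $\partial\Omega_k$ on $\Sigma_k$, so $d(p,\Sigma_k)\leq 4\pi$ for every $p\in V$, and the inductive bound $\diam\Sigma_k\leq\pi$ from property (3) gives $d(p,q)\leq 9\pi<10\pi$ with no flow argument needed. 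In short: the estimate is homological, not dynamical, and without invoking the slice step your proof does not close.
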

\begin{proof}
By Lemma \ref{lemm:slice-and-dice-top}, if $p,q$ are in the same connected component of $\Omega_{k+1}\setminus \Omega_k$, then they are connected there to a unique component of $\partial\Omega_k$. Call this component $\Sigma_k$. By construction $d(p,\Sigma_k),d(q,\Sigma_k) \leq 4\pi$. Moreover, $\diam\Sigma_k \leq \pi$. Thus, $d(p,q) \leq 9\pi$. This completes the proof. 
\end{proof}

We now verify that $\Omega_{k+1}$ satisfies conditions (3) and (4). Take a component $\Sigma$ of $\partial \Omega_{k+1}$. By Proposition \ref{prop:fb-mu-bub}, $\Sigma$ has (possibly empty) free boundary at $\partial\hat M_3$. If $\Sigma$ has no boundary, then $\Sigma$ satisfies \eqref{eq:deltaM3-u3u4}, and thus by the same proof as in Lemma \ref{lem:hat-cA-3-crit-pts}, $\Sigma$ is a topological sphere with $\diam \Sigma\leq \pi$. If $\Sigma$ does have boundary, we can conclude that 
\begin{align*}
0 &\leq \int_\Sigma \left(|\nabla_\Sigma \psi|^2 \lambda_3 -\tfrac12 (R_{\hat M_3} - 1 - 2K_\Sigma  )\psi^2 \lambda_3 + (\Delta_{\hat M_3} \lambda_3  -\Delta_\Sigma \lambda_3)\psi^2     \right.\\
&\left. \qquad\qquad -\tfrac 12 \lambda_3^{-1} \bangle{\nabla_{\hat M_3} \lambda_3,\nu_\Sigma}^2\psi^2  
\vphantom{\mathring{A_\Sigma}}\right)d\cH^{2}\\
& \qquad - \int_{\partial\Sigma} A_{\partial \hat M_3} (\nu_\Sigma,\nu_\Sigma) \psi^2 \lambda_3 \, d\cH^{1} .
\end{align*}
Using \eqref{eq:deltaM3-u3u4} (recall that $\lambda_3 = u_3u_4$), we find
\begin{align*}
0 &\leq \int_\Sigma \left(|\nabla_\Sigma \psi|^2 \lambda_3 -  (1 - K_\Sigma  )\psi^2 \lambda_3 - (\Delta_\Sigma \lambda_3)\psi^2     \right.\\
&\left. \qquad\qquad + \bangle{\nabla_{\hat M_3}u_3,\nabla_{\hat M_3}u_4}\psi^2 -\tfrac 12 \lambda_3^{-1} \bangle{\nabla_{\hat M_3} \lambda_3,\nu_\Sigma}^2\psi^2  
\vphantom{\mathring{A_\Sigma}}\right)d\cH^{2}\\
& \qquad - \int_{\partial\Sigma} A_{\partial \hat M_3} (\nu_\Sigma,\nu_\Sigma) \psi^2 \lambda_3 \, d\cH^{1} .
\end{align*}
Taking first $\psi = \lambda_3^{-\frac 12}$, we conclude that 
\begin{align*}
0 &\leq \int_\Sigma \left(\tfrac 1 4 \lambda_3^{-2} |\nabla_\Sigma \lambda_3|^2  -  (1 - K_\Sigma  )  - \lambda_3^{-1} (\Delta_\Sigma \lambda_3)     \right.\\
&\left. \qquad\qquad + \lambda_3^{-1}\bangle{\nabla_{\hat M_3}u_3,\nabla_{\hat M_3}u_4} -\tfrac 12 \lambda_3^{-2} \bangle{\nabla_{\hat M_3} \lambda_3,\nu_\Sigma}^2 
\right)d\cH^{2}\\
& \qquad - \int_{\partial\Sigma} A_{\partial \hat M_3} (\nu_\Sigma,\nu_\Sigma)   \, d\cH^{1} \\
& = \int_\Sigma \left( K_\Sigma - 1  -\tfrac 3 4 \lambda_3^{-2} |\nabla_\Sigma \lambda_3|^2 + \lambda_3^{-1}\bangle{\nabla_{\hat M_3}u_3,\nabla_{\hat M_3}u_4} \right.\\
& \qquad \qquad \left. -\tfrac 12 \lambda_3^{-2} \bangle{\nabla_{\hat M_3} \lambda_3,\nu_\Sigma}^2 
\right)d\cH^{2}\\
& \qquad - \int_{\partial\Sigma} (\lambda_3^{-1} \bangle{\nabla_\Sigma\lambda_3,\eta} + A_{\partial \hat M_3} (\nu_\Sigma,\nu_\Sigma)  ) \, d\cH^{1}\\
& = \int_\Sigma \left( K_\Sigma - 1  -\tfrac 3 4 \lambda_3^{-2} |\nabla_\Sigma \lambda_3|^2 + \lambda_3^{-1}\bangle{\nabla_{\hat M_3}u_3,\nabla_{\hat M_3}u_4} \right.\\
&\left. \qquad \qquad -\tfrac 12 \lambda_3^{-2} \bangle{\nabla_{\hat M_3} \lambda_3,\nu_\Sigma}^2 
\right)d\cH^{2}\\
& \qquad - \int_{\partial\Sigma}( - \lambda_3^{-1} \bangle{\nabla_\Sigma\lambda_3,\nu_{\partial \hat M_3}} + H_{\partial\hat M_3} - A_{\partial \hat M_3} (\tau_{\partial \Sigma},\tau_{\partial \Sigma}) ) \, d\cH^{1}\\
& = \int_\Sigma \left( K_\Sigma - 1  -\tfrac 3 4 \lambda_3^{-2} |\nabla_\Sigma \lambda_3|^2 + \lambda_3^{-1}\bangle{\nabla_{\hat M_3}u_3,\nabla_{\hat M_3}u_4} \right.\\
&\qquad \qquad \left.-\tfrac 12 \lambda_3^{-2} \bangle{\nabla_{\hat M_3} \lambda_3,\nu_\Sigma}^2 
\right)d\cH^{2}\\
& \qquad + \int_{\partial\Sigma} A_{\partial \hat M_3} (\tau_{\partial \Sigma},\tau_{\partial \Sigma})  \, d\cH^{1}.
\end{align*}
Here $\eta$ is the outward conormal vector of $\partial \Sigma\subset \Sigma$, $\tau_{\partial \Sigma}$ is the unit tangential vector along $\partial \Sigma$. Since $\Sigma$ meets $\partial \hat M_3$ orthogonally, $A_{\partial \hat M_3}(\tau_{\partial \Sigma},\tau_{\partial \Sigma})=k_{\partial \Sigma}$. Also, as in Lemma \ref{lem:hat-cA-3-crit-pts}, the terms involving $\lambda_3$ are non-positive. Therefore we find 
\[
\cH^2(\Sigma) \leq \int_\Sigma K_\Sigma  + \int_{\partial\Sigma} k_{\partial\Sigma}  \, d\cH^{1} = 2\pi\chi(\Sigma). 
\]
This implies that $\Sigma$ is a topological disk.

Finally, as in Lemma \ref{lem:hat-cA-3-crit-pts} we find $u_2$ satisfying 
\[
\begin{cases}
\Div_\Sigma(\lambda_3 \nabla_\Sigma u_2) \leq - (2-K_\Sigma) u_2\lambda_3 - (\Delta_\Sigma\lambda_3)u_2  + \tfrac 12 \lambda_3^{-1}  |\nabla_{\Sigma}\lambda_3|^2 u_2 & \textrm{in $\Sigma$} \\
\bangle{\nabla_\Sigma u_2,\eta} = A_{\partial \hat M_3}(\nu_\Sigma,\nu_\Sigma)u_2 & \textrm{on $\partial\Sigma$}
\end{cases}
\]
Thus, we have that $\lambda_2=u_2\lambda_3 =u_2u_3u_4$ satisfies
\[
\Delta_\Sigma \lambda_2  \leq - (2-K_\Sigma) \lambda_2 + \tfrac 12 \lambda_2^{-1} |\nabla_\Sigma \lambda_2|^2 .
\]
in $\Sigma$ and 
\begin{align*}
\bangle{\nabla_\Sigma \lambda_2,\eta}  & =  A_{\partial \hat M_3}(\nu_\Sigma,\nu_\Sigma)  \lambda_2 + \bangle{\nabla_\Sigma \lambda_3,\eta} u_2 \\
& =   H_{\partial\hat M_3} \lambda_2  - \lambda_3^{-1} \bangle{\nabla_\Sigma \lambda_3,\nu_{\partial\hat M_3}} \lambda_2   -  k_{\partial\Sigma} \lambda_2 \\
& =  -  k_{\partial\Sigma} \lambda_2.
\end{align*}
Thus, Lemma \ref{lemm:bdry-2-dim-diam-bds} implies that $\diam \Sigma \leq \pi$.

\subsection{Filling $M_3$} We now show that $M_3$ can be filled in $\tilde N$ by a $4$-chain of uniformly bounded diameter (as $L\to\infty$). As explained above, this will give the desired contradiction, proving Theorem \ref{thm.main}. We first summarize the slice and dice procedure used above as it applies to $M_3$.

There exists a set of disjoint embedded spheres $\hat \Sigma_1,\dots,\hat\Sigma_k \subset M_3$ with $\diam\hat\Sigma_j \leq \pi$. Moreover, there is a set $\hat \Upsilon_1,\dots,\hat \Upsilon_\ell \subset M$ of embedded disks with $\diam \hat \Upsilon_j\leq \pi$ and $\partial \hat \Upsilon_j$ contained in $\cup_{i=1}^k \hat\Sigma_{i}$ and so that the interiors of $\hat \Upsilon_j$ are pairwise disjoint with each other as well as with each $\hat\Sigma_j$. Finally, each connected component $U_1,U_2,\dots,U_m$  of 
\[
M_3 \setminus ((\cup_{j=1}^k \hat\Sigma_j) \cup (\cup_{j=1}^\ell \hat\Upsilon_j))
\]
has diameter bounded by $10\pi$. 

Observe that any component of $\partial U_j$ is a topological sphere (it will be smooth except it could have a right angle corner along one or two closed curves arising from points where the free boundary disks intersect the original spheres) of extrinsic diameter at most $3\pi$. Write these spheres as $\Gamma_j^1,\dots,\Gamma_j^{n(j)}$. By Proposition \ref{prop:fill-bd-diam}, there is $R>0$ (independent of $L$) so that we can fill the $\Gamma_j^i$ by $\hat \Gamma_j^i$ with extrinsic diameter at most $R$. Then, 
\[
U_j - \hat\Gamma_j^1 - \dots - \hat \Gamma_j^{n(j)} 
\]
is a cycle with extrinsic diameter at most $2R+10\pi$. As such, there is $\hat R>0$ (independent of $L$) so that by Proposition \ref{prop:fill-bd-diam} we can find a $4$-chain $\hat U_j$ of extrinsic diameter at most $\hat R$ with
\[
\partial \hat U_j = U_j - \hat\Gamma_j^1 - \dots - \hat \Gamma_j^{n(j)} . 
\]
Note that 
\[
M_3 - \sum_{j=1}^m \partial \hat U_j =  \sum_{j=1}^m \sum_{i=1}^{n(j)} \hat \Gamma_j^i. 
\]
Note that for each $\Gamma_j^i$, there is an index $u(j,i) \in \{1,\dots,k\}$ so that $\Gamma_j^i$ intersects the sphere $\hat\Sigma_{u(j,i)}$ but not any of the other spheres. As such, we group the $\Gamma_j^i$ by $u(j,i) = 1,2,\dots,k$. We find that for $a \in \{1,2,\dots,k\}$,
\[
\sum_{\{i,j : u(i,j) = a\}} \hat \Gamma_j^i
\]
is a cycle of diameter at most $2R + \pi$. Indeed, 
\begin{equation}\label{eq:sum-hat-gamma-aij-bdry}
\partial \left[ \sum_{a=1}^k \sum_{\{i,j : u(i,j) = a\}} \hat \Gamma_j^i \right]= 0
\end{equation}
and if 
\[
\partial \left[ \sum_{\{i,j : u(i,j) = a\}} \hat \Gamma_j^i \right] \neq 0
\]
for some $a \in \{1,\dots,k\}$, then this boundary component intersects there sphere $\hat \Sigma_a$ (but none of the other spheres). As such, the other terms in the sum \eqref{eq:sum-hat-gamma-aij-bdry} could not cancel this term. This would be a contradiction.

\begin{figure}[htbp]
	\centering
	\includegraphics[width=\textwidth]{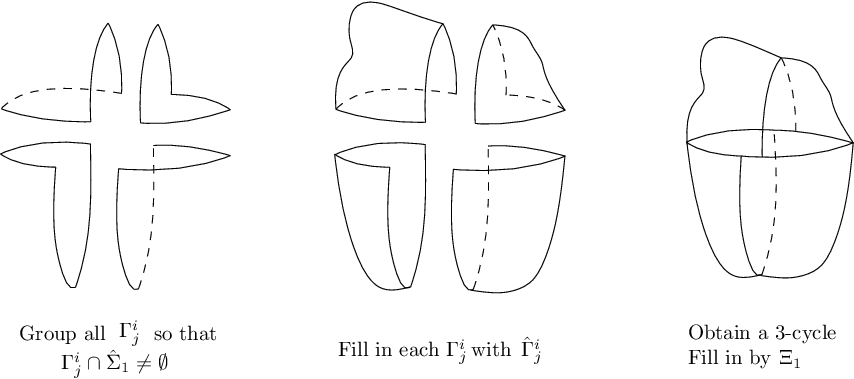}
	\caption{Filling in $\Gamma_j^i$ that intersecting $\hat \Sigma_1$}
	\label{pic5_fill_in}
\end{figure}

As such, there is $\tilde R>0$ (independent of $L$) so that by Proposition \ref{prop:fill-bd-diam} yet again, there is a $4$-chain $\Xi_a$ with 
\[
\partial \Xi_a = \sum_{i,j : u(i,j) = a} \hat \Gamma_j^i
\]
and extrinsic diameter at most $\tilde R$. Thus, we have written 
\[
M_3 = \partial \left[ \sum_{j=1}^m \hat U_j + \sum_{a=1}^k \Xi_a \right]
\]
where each term in the sum has diameter uniformly bounded as $L\to\infty$. This completes the proof.

\section{Proof of Theorem \ref{thm.pmt-bend}} \label{sec:proof-pmt-bend}

Let $n\ge 3$. For $X$ a $n$-manifold (compact or non-compact), suppose that $g$ is a complete metric on $M = T^n\# X$ with non-negative scalar curvature. By a result of Kazdan \cite{Kazdan:deform-PSC}, either $g$ is Ricci flat or $M$ admits a complete metric of positive scalar curvature. However, a complete Ricci flat metric on $T^n\# X$ is easily seen to be flat (this follows from the splitting theorem; for example, see \cite[Theorem 4]{CG:split}). As such, it suffices to consider the case of positive scalar curvature.  We will pass to an appropriate covering space and apply the $\mu$-bubble technique on the cover, after carefully choosing a weight function $h$. 

Fix $\ep>0$ small and define 
\[
\Xi : =  \{\vec x= (x_1,\cdots,x_n) \in \RR^n : |\vec x - \vec k| > \ep, \vec k \in \ZZ^n\}/\sim
\]
where $(x_1,\cdots,x_n) \sim (x_1+k_1,\cdots,x_n+k_n)$ for $k_1,\cdots,k_n \in \ZZ$. By assumption, there is a map $\Psi : \Xi \to  M$ so that $\Psi$ is a diffeomorphism onto its image. By scaling, we can assume that $R_g > 1$ on $\Psi(\Xi)$.

Observe that $M$ is (topologically) covered by $\hat M = (T^{n-1}\times \RR) \#_\ZZ X$ (unwind one of the $S^1$ factors in $T^n$). Define
\begin{equation}\label{eq.chi.hat}
\hat \Xi = \{\vec x=(x_1,\cdots,x_n) \in \RR^n : |\vec x - \vec k| > \ep, \vec k \in \ZZ^n\}/\sim
\end{equation}
where $(x_1,\cdots,x_n) \sim (x_1+k_1,\cdots, x_{n-1}+k_{n-1},x_n)$ and note that the map $\Psi$ lifts to $\hat \Psi :\hat \Xi \to \hat X$, a diffeomorphism onto its image $\hat M_0$. It is useful to write 
\[
\hat M = \hat M_0 \cup \left(\cup_{k\in\ZZ} \mathring X_k\right)
\]
where each $\mathring X_k$ is (topologically $X\setminus  B$ for an $n$-balls $B$ in $X$) attached to $\hat M_0 : = \hat\Psi(\hat \Xi)$ along small spheres centered at $(0,0,k)$. 

We now define a function $\rho_0 : \hat M \to \RR$ as follows. Define $\hat \Xi_{2\ep}$ as in \eqref{eq.chi.hat} but with $2\ep$ in the place of $\ep$. On $\Psi(\hat\Xi_{2\ep})$, we take $\rho_0(x_1,\cdots,x_n) = x_n$. On the annuli $\hat\Psi(\hat \Xi_{2\ep}\setminus \hat \Xi)$ centered at $(0,\cdots,0,k)$, interpolate between $x_n$ and $k+\tfrac 12$ (we can do this with uniformly $C^{1}$-norm independent of $k$). Then, on $\mathring X_k$ define 
\[
\rho_0(p) =\begin{cases} k + \tfrac 12 + \dist_g(p,\partial\mathring X_k)\quad k\ge 0;\\ k+\tfrac12 - \dist_g(p,\partial \mathring X_k)\quad k<0.\end{cases}
\]
We now define $\rho_1$ to be a smoothing of $\rho_0$. We can assume that $\rho_1\equiv k+\tfrac 12$ in a small neighborhood of $\partial \mathring X_k$. Since $\hat{M}$ is a covering space of $M$, there is $L>0$ so that 
\[|\Lip(\rho_1)|_g < L.\]
We may take $L$ larger if necessary to assume that $\tfrac{\pi L}{2} = J + \tfrac 34$ for some $J \in \NN$. 

We now define a function $h\in C(\hat M,[-\infty,\infty])$ as follows. On $\hat M_0 \cap \{ -\tfrac{\pi L}{2} \leq \rho_1 \leq \tfrac{\pi L}{2} \}$, we define 
\[
h(p) = - \tan(\tfrac{1}{L} \rho_1(p)). 
\]
On the rest of $\hat M_0$ we set $h = \pm \infty$ such that it is continuous to $[-\infty,\infty]$. We now define $h$ on $\mathring X_k$. When $k>J$, set $h=-\infty$ on $\mathring X_k$, and when $K<-J$, set $h=\infty$ on $\mathring X_k$. Now assume $|k| \leq J$.

For $0\le k\le J$ and
\[
p \in \mathring X_k \cap \left\{\rho_1 < k + \tfrac 12 + \frac{2L}{\tan(L^{-1}(k+\tfrac 12))}\right\},
\]
or $-J\le k<0$ and
\[
p \in \mathring X_k \cap \left\{\rho_1 > k + \tfrac 12 + \frac{2L}{\tan(L^{-1}(k+\tfrac 12))}\right\},
\]
 we set
\[
h(p) =  \frac{2L}{\rho_1(p) - (k+\tfrac 12 ) -  \frac{2L}{\tan ( L^{-1}(k+\tfrac 12))}} .
\]
Otherwise we set $h(p)=\pm\infty$ such that $h$ is continuous.

\begin{figure}[htbp]
	\centering
	\includegraphics[width=\textwidth]{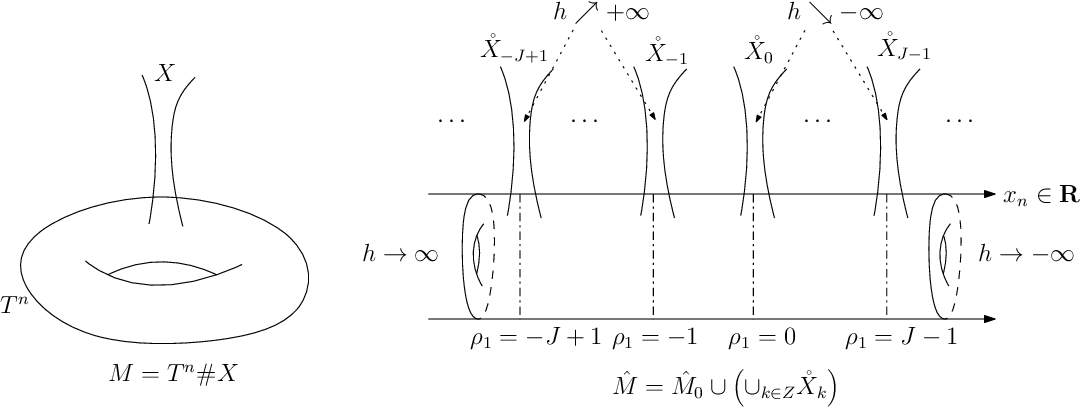}
	\caption{Unwrap $M=T^n\#X$ and construct a function $h$}
	\label{pic4_torus}
\end{figure}
 
We make several observations. First of all, since $-J \le k\leq J$, we see that 
\[
-\tfrac \pi 2 < L^{-1}(k+\tfrac 12) < \tfrac \pi 2. 
\]
Moreover, for $p \in \partial\mathring X_k$, we have that
\[
h(p) = - \tan (L^{-1}(k+\tfrac 12)) = - \tan(L^{-1}\rho_1(p)),
\]
and thus $h$ is Lipchitz across $\partial\mathring X_k$. Finally, if $0\le k \le J$, $p\in \mathring{X}_k$ and 
\[
\rho_1(p) \nearrow k + \tfrac 12 + \frac{2L}{\tan(L^{-1}(k+\tfrac 12))},
\]
we have that $h(p) \to -\infty$. Similarly, if $-J\le k \le 0$, $p\in \mathring{X}_k$ and
\[
\rho_1(p) \searrow k + \tfrac 12 + \frac{2L}{\tan(L^{-1}(k+\tfrac 12))},
\]
$h_1(p)\to \infty $. Thus $h$ is continuous.

Note that $\{|h| < \infty\}$ is compact. This is because this region is compact in $M_0$, only finitely many ends $\mathring X_k$ are included in this set, and in each $\mathring X_k$, the region where $\{|h|<\infty\}$ is bounded. 

\begin{lemm} We can smooth $h$ slightly to find a function $h\in C^\infty(\hat M)$ satisfying
\begin{equation}\label{eq:mu-bubble-ineq-h-psc}
R_{\hat g} + h^2 - 2|\nabla h| > 0
\end{equation}
on $\{|h|<\infty\}$. 
\end{lemm}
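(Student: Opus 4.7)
The plan is to verify the strict inequality $R_{\hat g} + h^2 - 2|\nabla h| > 0$ pointwise on each open region where $h$ is defined by a smooth closed-form expression, observe that $h$ is continuous across each interface $\partial \mathring X_k$, and then mollify $h$ in a small neighborhood of those interfaces, using compactness of $\{|h|<\infty\}$ to preserve strict positivity after smoothing.

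On $\hat M_0 \cap \{|\rho_1| < \pi L/2\}$, where $h = -\tan(\rho_1/L)$, one computes $1 + h^2 = \sec^2(\rho_1/L)$ and $|\nabla h| \le L^{-1}|\nabla \rho_1|\sec^2(\rho_1/L)$. Since $R_{\hat g} > 1$ on $\hat M_0$ by the initial scaling,
$$R_{\hat g} + h^2 - 2|\nabla h| > \bigl(1 - 2L^{-1}|\nabla \rho_1|\bigr)\sec^2(\rho_1/L),$$
which is strictly positive provided $L$ is large enough that $|\Lip \rho_1| < L/2$; this is compatible with $\tfrac{\pi L}{2} = J + \tfrac{3}{4}$ for some $J\in\NN$, since $L$ can be enlarged freely. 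On each end $\mathring X_k$ with $|k|\le J$, in the finite region where $h = 2L/(\rho_1 - c_k)$ with $c_k = k + \tfrac{1}{2} + 2L/\tan(L^{-1}(k+\tfrac{1}{2}))$, a direct calculation yields
$$h^2 - 2|\nabla h| = \frac{4L(L - |\nabla \rho_1|)}{(\rho_1 - c_k)^2},$$
which is positive since $\rho_1$ is a smoothing of $\rho_0 = k + \tfrac{1}{2} \pm \dist_g(\cdot,\partial\mathring X_k)$ and can be taken with $|\nabla \rho_1|$ close to $1 < L$. Combined with $R_{\hat g} > 0$ on $\hat M$, this gives the required strict inequality on this piece.

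For the smoothing step, $h$ is already continuous across each $\partial \mathring X_k$ since both one-sided limits equal $-\tan(L^{-1}(k+\tfrac{1}{2}))$. The set $\{|h|<\infty\}$ is relatively compact in $\hat M$: inside $\hat M_0$ it is confined to $\{|\rho_1| < \pi L/2\}$, which is periodic in the $T^{n-1}$ directions and bounded in the remaining $\RR$ direction, and it meets only the finitely many ends $\mathring X_k$ with $|k|\le J$, each in an explicitly bounded range of $\rho_1$. Completeness of $\hat g$ then gives compactness, and the pointwise inequality has a uniform positive lower bound on the open dense smooth locus. A standard mollification of $h$ supported in an arbitrarily small tubular neighborhood of each interface $\partial\mathring X_k$ (where $h$ is bounded and $C^0$) produces a function in $C^\infty(\hat M)$ whose gradient differs in $L^\infty$ from the piecewise classical gradient by an arbitrarily small amount, preserving the strict inequality on all of $\{|h|<\infty\}$.

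The main technical obstacle is choosing $L$ to simultaneously satisfy $|\Lip \rho_1|<L/2$, $\tfrac{\pi L}{2}=J+\tfrac{3}{4}$ for some $J\in\NN$, and $|\nabla\rho_1|<L$ on the ends; all three are compatible once $L$ is taken sufficiently large. Once $L$ is fixed, the mollification near the $C^0$-joins across $\partial\mathring X_k$ is routine.
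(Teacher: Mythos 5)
Your proposal is correct and follows essentially the same route as the paper: verify the inequality pointwise on each region where $h$ is given by an explicit formula, observe that $h$ is Lipschitz across each $\partial\mathring X_k$, and mollify near those interfaces using the (relative) compactness of $\{|h|<\infty\}$. One point worth flagging is that your $\hat M_0$ computation is cleaner than the paper's and in effect corrects a small slip there. The paper's displayed bound $1 + \tan^2(L^{-1}\rho_1) - 2\sin^{-2}(L^{-1}\rho_1) > 0$ is false (take $L^{-1}\rho_1 = \pi/4$), and the $\sin^{-2}$ term does not match the actual gradient $|\nabla h| = L^{-1}\sec^2(L^{-1}\rho_1)\,|\nabla\rho_1|$. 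Your correctly derived bound $\bigl(1 - 2L^{-1}|\nabla\rho_1|\bigr)\sec^2(L^{-1}\rho_1)$ makes explicit that one needs $L > 2\,|\Lip\rho_1|$, not merely $L > |\Lip\rho_1|$ as the paper literally states; this is consistent with the paper's remark that $L$ may be enlarged, and is in keeping with the analogous $\mu$-bubble construction in Section~\ref{sec:proof-of-thm-main}, where $\rho_4$ has Lipschitz constant $2$ and is rescaled by $4 + \tfrac{2}{\pi}\ep$. Your treatment of the ends $\mathring X_k$ (giving $h^2 - 2|\nabla h| = 4L(L-|\nabla\rho_1|)/(\rho_1 - c_k)^2$) and the final mollification step match the paper.
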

\begin{proof}

The function $h$ constructed above is smooth away from $\partial X_k$ (and Lipschitz there). As such, if we prove \eqref{eq:mu-bubble-ineq-h-psc} for function $h$ considered above, then we can easily find a smooth function satisfying \eqref{eq:mu-bubble-ineq-h-psc}.

We first check \eqref{eq:mu-bubble-ineq-h-psc} on $\hat M_0$. There, $R_{\hat g} > 1$. As such, we have that
\[
R_{\hat g} + h^2 - 2|\nabla h| > 1 + \tan^2(L^{-1}\rho_1(p)) - 2 \cos^{-2}(L^{-1}\rho_1(p)) > 0. 
\]
On the other hand, on $\mathring X_k$ (we assume that $k\geq 0$ as the $k<0$ case is similar), we only know that $R_{\hat g} > 0$. Nevertheless, we compute
\[
R_{\hat g} + h^2 - 2|\nabla h| >  \frac{4L^2 - 4L^2}{\left( \rho_1(p) - (k+\tfrac 12 ) -  \frac{2L}{\tan ( L^{-1}(k+\tfrac 12))}\right)^2} = 0. 
\]
This completes the proof. 
\end{proof}
We can thus consider $\mu$-bubbles with respect to the function $h$ we have just defined. We fix 
\[
\Omega_0 : =\left(\hat\Psi(\hat \Xi \cap\{x_n < -\tfrac 12\}) \cup (\cup_{k<0}\mathring X_k)\right)\cap \{|h|<\infty\}.
\]
We can minimize $\cA$ among all Cacioppoli sets $\Omega$ such that $\Omega\Delta \Omega_0$ is compactly contained in $\{|h|<\infty\}$ by the argument given in Proposition \ref{prop.existence.regularity} (with $u=1$). Denote by $\Omega$ the connected component of the minimizer containing $\{\rho_1=-J\}$. Since $n\le 7$, each component of $\partial\Omega$ is compact and regular. By the stability inequality for $\cA$ from Lemma \ref{lemm:2nd-var} (with $u=1$) and \eqref{eq:mu-bubble-ineq-h-psc}, we see that $\Sigma = \partial \Omega$ satisfies 
\begin{equation}\label{eq:def-to-psc-12}
\int_\Sigma \left( |\nabla \varphi|^2 + \tfrac 12  R_\Sigma \varphi^2 \right) d\cH^{n-1} > 0
\end{equation}
for all $\varphi \in C^\infty(\Sigma)$. 

We can find a compact region $\hat M' \subset \hat M$ with smooth boundary so that $\partial\Omega \subset \hat M'$. Furthermore, we can arrange that $\partial \hat M' \cap \hat M_0 = \hat\Psi(\{z = \pm (J+1)\})$. Note that the other boundary components of $\hat M'$ thus lie completely in some $\mathring X_k$. 

In particular, 
$\partial\hat M \setminus \hat M_0$ bounds some compact manifold with boundary. Cap these components off and then glue the $\{z=J+1\}$ and $\{z=-(J+1)\}$ tori to each other. We thus find a manifold $\tilde M$ diffeomorphic to $T^n \# \tilde X$ for $\tilde X$ closed and $\Sigma^{n-1}\subset \tilde M$ a hypersurface that is homologous to $[T^{n-1}\times \{*\}]\in H_{n-1}(\tilde M)$ that satisfies \eqref{eq:def-to-psc-12}. (We have not constructed a metric on $\tilde M$, but this does not matter in the remaining part of the argument, all we need is the topology of $\tilde M$ and $\Sigma$ as well as the fact that $\Sigma$ satisfies \eqref{eq:def-to-psc-12}.) 

We claim that this leads to a contradiction following the argument in \cite{SY:descent,SY:sing-PMT}. Indeed, on the one hand \eqref{eq:def-to-psc-12} implies that (each component of) $\Sigma$ has positive first eigenvalue of the conformal Laplacian, and thus admits a metric of positive scalar curvature. On the other hand, we can pull back the $1$-forms $dx^1,\dots,dx^{n-1}$ along the map $\pi: (T^n\#\tilde X) \to T^n$ to find $1$-forms $\omega^1,\dots,\omega^{n-1}$ so that 
\[
\int_\Sigma \omega^1 \wedge\dots\wedge \omega^{n-1} \neq 0. 
\]
(this follows from the fact that $\Sigma$ is homologous to $T^{n-1}\times \{*\}$). The proof can now be completed using the inductive method of \cite{SY:descent,SY:sing-PMT}.

\section{Schoen-Yau-Schick manifolds}\label{section.SYS}
In this section, we briefly indicate an extension of Theorem \ref{thm.pmt-bend} to manifolds in the form of $(M\times S^1)\#X$, where $M$ is a Schoen--Yau--Schick manifold (abbreviated as SYS manifold, following the definition of Gromov \cite[Section 5]{Gromov:metric-inequalities}). These manifolds was first considered in the celebrated work \cite{SY:descent} of Schoen-Yau where the inductive descent argument was introduced. In \cite{SY:descent}, these manifolds are said to be of class $C_n$. We recall the definition here.

\begin{defi}[\cite{SY:descent}\cite{Schick1998counterexample}\cite{Gromov:metric-inequalities}\cite{SY:sing-PMT}]\label{defi.SYS}
	Let $n\ge 2$. A compact orientable $n$-manifold $M$ is called Schoen--Yau--Schick, if there exist $n-2$ integer homology classes $h_1,\cdots,h_{n-2}\in H^1(M)$ such that $\sigma= h_1\frown \cdots\frown h_{n-2}\frown [M]\in H_2(M,\ZZ)$ is non-spherical. That is, $\sigma$ is not contained in the image of the Hurewicz homomorphism $\pi_2(M)\to H_2(M)$.
\end{defi}

For example, the torus is an SYS manifold. Using minimal surface and induction descent argument, Schoen--Yau in \cite{SY:descent} proved that SYS manifolds of dimension at most $7$ does not admit positive scalar curvature metrics. In \cite{Schick1998counterexample}, Schick constructed an SYS manifold which is a counterexample to the unstable Gromov-Lawson-Rosenberg conjecture. 

Let $n\le 6$, and $M^n$ be an SYS manifold. By passing to the cover $M\times \RR$ and constructing the same functions $\rho_0,\rho_1$ and $h$ as in Section 6, we can extend Theorem \ref{thm.pmt-bend} to the following.

\begin{theo}\label{thm.SYS}
	Let $2\le n\le 6$, $M^n$ be an Schoen--Yau--Schick manifold. For any $(n+1)$-manifold $X$, the connected sum $(M\times S^1)\#X$ does not admit a complete metric of positive scalar curvature.
\end{theo}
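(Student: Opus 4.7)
The plan is to follow the structure of Section \ref{sec:proof-pmt-bend} almost verbatim, replacing the torus factor by the SYS manifold $M$ and replacing the final descent with the Schoen--Yau descent for SYS manifolds. First, by Kazdan's deformation result and the Cheeger--Gromoll splitting theorem (a complete Ricci flat metric on a manifold of the form $(M\times S^1)\# X$ with $M$ SYS of dimension $\ge 2$ would split off a line, contradicting SYS-ness of $M$), it suffices to rule out a complete metric $g$ of strictly positive scalar curvature on $\hat N := (M\times S^1)\# X$. After rescaling, we may assume $R_g>1$ on the ``torus-like'' part coming from $M\times S^1$ minus the connect sum ball.

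Next, I would pass to the covering space $\hat M := (M\times\RR)\#_\ZZ X$ obtained by unwinding the distinguished $S^1$-factor, so that $\hat M = \hat M_0 \cup (\cup_{k\in\ZZ}\mathring X_k)$ with $\hat M_0\subset M\times\RR$ and each $\mathring X_k$ glued along a small sphere centered at a point $(*,k)$. On $\hat M_0$ I would use the $\RR$-coordinate in place of $x_n$ to build $\rho_0$, smooth it to $\rho_1$, and then define the weight $h$ exactly as in Section \ref{sec:proof-pmt-bend}: $h=-\tan(L^{-1}\rho_1)$ on the strip in $\hat M_0$, and on each $\mathring X_k$ with $|k|\le J$ use the reciprocal profile matching the Lipschitz value of $h$ on $\partial\mathring X_k$, setting $h=\pm\infty$ elsewhere. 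The inequality $R_{\hat g}+h^2-2|\nabla h|>0$ on $\{|h|<\infty\}$ is proved by the same two computations as in Section \ref{sec:proof-pmt-bend} (one on $\hat M_0$, where $R_{\hat g}>1$ supplies the slack; one on each $\mathring X_k$, where only $R_{\hat g}>0$ is used but the reciprocal profile was chosen so the inequality already holds with $R_{\hat g}$ dropped). Then Proposition \ref{prop.existence.regularity} produces a smooth minimizing $\mu$-bubble $\Omega$ for $\cA$ with $u=1$, and taking the component whose boundary separates the $\rho_1=-J$ end from the $\rho_1=+J$ end yields a smooth closed hypersurface $\Sigma\subset\hat M$ of dimension $n$. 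Lemma \ref{lemm:2nd-var} gives
\begin{equation*}
\int_\Sigma\bigl(|\nabla\varphi|^2+\tfrac12 R_\Sigma\varphi^2\bigr)\,d\cH^n>0\quad\forall\varphi\in C^\infty(\Sigma),
\end{equation*}
so $\Sigma$ is Yamabe-positive and admits a metric of positive scalar curvature.

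To identify the homology class of $\Sigma$, I would cap off the finitely many compact boundary components lying inside the $\mathring X_k$'s (after restricting to a large $\hat M'\subset\hat M$ with $\partial\hat M'\cap \hat M_0 = M\times\{\pm(J+1)\}$) and glue the two copies of $M\times\{\pm(J+1)\}$ to each other. This yields a closed topological manifold $\tilde N$ diffeomorphic to $(M\times S^1)\#\tilde X$ for some closed $\tilde X$, and by construction $\Sigma$ is homologous in $\tilde N$ to $M\times\{*\}$. The main obstacle, and the only genuinely new ingredient beyond Section \ref{sec:proof-pmt-bend}, is running the Schoen--Yau descent on $\Sigma$: pull back the cohomology classes $h_1,\dots,h_{n-2}\in H^1(M)$ witnessing SYS-ness of $M$ along the projection $\tilde N\to M\times S^1\to M$ to get $\tilde h_1,\dots,\tilde h_{n-2}\in H^1(\tilde N)$. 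Because $[\Sigma]=[M\times\{*\}]$, the class $\tilde h_1\frown\cdots\frown\tilde h_{n-2}\frown[\Sigma]$ equals $h_1\frown\cdots\frown h_{n-2}\frown[M]$ under the projection and is therefore non-spherical in $H_2(\Sigma)$.

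Finally, since $\dim\Sigma=n\le 6$, I would iterate the Schoen--Yau inductive descent of \cite{SY:descent,SY:sing-PMT} exactly $n-2$ times inside $\Sigma$, producing at each stage a stable minimal hypersurface dual to $\tilde h_i|_\Sigma$ that inherits a conformal metric of positive scalar curvature. After $n-2$ steps we arrive at a closed orientable surface $\Sigma_2$ with a metric of positive scalar curvature and representing a non-spherical homology class in the ambient $3$-manifold from the previous step, which is a contradiction (such a surface must have genus $\ge 1$ by non-sphericity, contradicting the Gauss--Bonnet/PSC classification). The hard part is checking that the non-sphericity of the cup product class survives each descent, but this is exactly the content of Schoen--Yau's argument and requires no modification.
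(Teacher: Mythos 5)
Your proposal matches the paper's proof essentially verbatim: the paper establishes Theorem \ref{thm.SYS} precisely by passing to the cover $(M\times\RR)\#_\ZZ X$, constructing the same functions $\rho_0,\rho_1,h$ as in Section \ref{sec:proof-pmt-bend}, extracting a Yamabe-positive $\mu$-bubble homologous (after capping and gluing) to $M\times\{*\}$, and finishing with the Schoen--Yau inductive descent for SYS manifolds. One quibble: your opening Kazdan/Cheeger--Gromoll reduction is unnecessary here (unlike Theorem \ref{thm.pmt-bend}, this statement only concerns strictly positive scalar curvature), and the parenthetical claim that splitting off a line would ``contradict SYS-ness of $M$'' is false as stated (flat tori are SYS) --- but since that step is superfluous it does not affect the argument.
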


\begin{rema}
After the first version of this paper first appeared, S.\ Chen \cite{Chen:SYS} generalized the proof of Theorem \ref{thm.SYS} to show that there is no complete positive scalar curvature metric on $M\# X$ for $M^n$ a Schoen--Yau--Schick manifold, $3 \leq n\leq 7$ and $X^n$ an arbitrary manifold. 
\end{rema}

\appendix
\section{The Lesourd--Unger--Yau reduction}

Schoen--Yau demonstrated that the Liouville theorem stated here in Corollary \ref{coro.SY-Liouville} is equivalent to a certain positive (non-negative) mass theorem for asymptotically flat manifolds with other ends \cite[\S 4]{SY:conf-flat} (cf.\ \cite[\S VI]{SY:lectures}) The usual positive mass theorem can be reduced to the non-existence of a positive scalar curvature metric on $T^n \# X$ (for $X$ compact) by an argument of Lohkamp. However, in general this argument involves global conformal deformations as first developed by Schoen--Yau in their proof of the positive mass theorem \cite{SY:PMT1}. 

As such, it is not immediate that in the context of Corollary \ref{coro.SY-Liouville}, it suffices to prove Theorem \ref{thm.pmt-bend}. This has been verified by Lesourd--Unger--Yau \cite{LUY:liouville} (cf.\ \cite[\S 4]{SY:conf-flat}). For completeness, we indicate a proof of their reduction here.

By work of Schoen--Yau \cite[Proposition 3.3]{SY:conf-flat}, to prove Corollary \ref{coro.SY-Liouville}, it suffices to consider $3\leq n\leq 6$ and a complete Riemannian $n$-manifold $(M^n,g)$ with $R_g\geq 0$ and a conformal map $\Phi : M\to S^n$. Denote by $g_0$ the round metric on $S^n$ and write $\Phi^* g_0 = |\Phi'|^2 g$. 

By \cite[Corollary 1.3]{SY:conf-flat}, the conformal Laplacian
\[
L_g = -\Delta_g + \frac{n-2}{4(n-1)} R_g
\]
admits a minimal Greens function $G>0$ satisfying $L_gG = \delta_{x_0}$ for every $x_0 \in M$. Denote by $G_0$ the Green's function for $L_0$ (the conformal Laplacian on $(S^n,g_0)$) based at $\Phi(x_0) = y_0$. It is a standard fact that 
\[
G_0^{\frac{4}{n-2}} g_0 = \pi^*(g_{\RR^n}),
\]
where $\pi : S^n \setminus\{y_0\} \to \RR^n$ is the stereographic projection map. Pull this equation back by $\Phi$ to and define 
\[
\overline g : = (G_0 \circ \Phi)^{\frac{4}{n-2}} |\Phi'|^2 g = (\Phi \circ \pi)^* g_{\RR^n}
\]
on $M\setminus \Phi^{-1}(y_0)$. Thus, $\overline g$ is flat (but likely incomplete). Observe, however that $x_0$ has a neighborhood $U$ so that $\overline g|_{U\setminus \{x_0\}}$ is isometric to $g_{\RR^n} |_{B_R(0)^c}$ via the map $\Phi \circ \pi$. 

Motivated by this fact, we define
\[
\overline G = |\Phi'|^{\frac{n-2}{2}} G_0 \circ \Phi
\]
and check that (see \cite[(1.1)]{SY:conf-flat})
\[
L_g\overline G = \sum_{P \in \Phi^{-1}(y_0)} |\Phi'(P)|^{\frac{n+2}{2}} \delta_P. 
\]
By construction (cf.\ \cite[Corollary 1.3]{SY:conf-flat}) we have that $|\Phi'(x_0)|^{\frac{n+2}{2}} G \leq \overline G$. 

We now define $\overline g_\ep = (G+\ep)^\frac{4}{n-2} g$. Note that for $\ep>0$, this is complete and (cf.\ \cite[(1.2)]{SY:conf-flat})
\[
R_{\overline g_\ep} = \frac{4(n-1)}{n-2} (G+\ep)^{-\frac{n+2}{n-2}}L_\ep(G+\ep) =  \ep  (G+\ep)^{-\frac{n+2}{n-2}} R_g \geq 0. 
\]
Moreover, $\overline g_\ep = ((G+\ep)/G)^\frac{4}{n-2} \overline g = (\Phi \circ \pi)^*(v_\ep^{\frac{4}{n-2}} g_{\RR^n})$, where 
\[
v_\ep \circ \Phi \circ \pi = G/\overline G  + \ep/\overline G : = (v + \ep \tilde v) \circ \Phi \circ \pi. 
\]
Observe that $v$ is harmonic on $B_R(0)^c\subset \RR^n$ and limits to $|\Phi'(x_0)|^{\frac{n+2}{2}}$ as $x\to\infty$. This follows from the fact that 
\[
G, |\Phi'(x_0)|^{-\frac{n+2}{2}}G_0 = c_nr^{n-2}(1+o(1))
\]
near $x_0$, for some dimensional constant $c_n > 0$. This is easily proven by a scaling argument (cf.\ \cite[\S V, Theorem 3.5]{SY:lectures}, but note that the full power of that result is not needed here).

Moreover, $\tilde v(x) = c_n^{-1} |\Phi'(x_0)|^{\frac{n+2}{2}} |x|^{2-n} + O(|x|^{1-n})$ as $x\to\infty$. Thus, we find that $v_\ep$ can be expanded as
\[
v_\ep(x) = A + B_\ep |x|^{2-n} + O(|x|^{1-n})
\]
for some $A >0$. By a well-known argument due to Lohkamp (see \cite[Proposition 5.4]{SY:sing-PMT}), if $B_\ep < 0$, we can modify $\overline g_\ep$ only in the chart $(\Phi\circ\pi)(B_R(0)^c)$ and obtain a complete metric $\overline g_\ep'$ on $M\setminus \Phi^{-1}(y_0)$ with $\overline g_\ep'$ flat on $(\Phi\circ\pi)(B_{2R}(0)^c)$ and so that $\overline g_\ep'$ has non-negative scalar curvature. Quotienting the sides of a large box in the flat region thus yields a contradiction to Theorem \ref{thm.pmt-bend}. 

Thus, we find that $B_\ep \geq 0$. Sending $\ep\to 0$, we thus find a harmonic function $v$ on $B_R(0)^c \subset \RR^n$ with
\[
v(x) = A + B |x|^{2-n} + O(|x|^{1-n})
\]
for some $A>0,B\geq 0$. Observe that $v(x) \leq A$ on $B_R(0)^c$. This follows from the construction of $v$ and the fact that $G$ was a minimal Greens function as discussed above. We can now apply the maximum principle to conclude that
\[
- B|x|^{2-n} + O(|x|^{1-n}) = A-v(x) \geq \left(\min_{\partial B_{2R}(0)} (A-v)\right)|x/(2R)|^{2-n}
\]
Because $B \geq 0$, we thus see that $v(x) \equiv A$. This implies that $G=\overline G$, completing the proof. 

\bibliography{bib}
\bibliographystyle{amsplain}

\end{document}